\definecolor{DarkPurple}{RGB}{102,0,153}
\newtheorem{thm}{Theorem}[section]
\newtheorem{theorem}[thm]{Theorem}
\newtheorem{corollary}[thm]{Corollary}
\newtheorem{question}[thm]{Question}
\newtheorem{lemma}[thm]{Lemma}
\newtheorem{prop}[thm]{Proposition}
\newtheorem{proposition}[thm]{Proposition}
\theoremstyle{definition}
\newtheorem{definition}[thm]{Definition}
\newtheorem{remark}[thm]{Remark}
\title[$t$-structures on the thin and thick affine flag varieties]{Perverse sheaves and $t$-structures on the thin and thick affine flag varieties}
\author{Roman Bezrukavnikov}
\author{Calder Morton-Ferguson}
\begin{document}

\begin{abstract}
    We study the categories $\mathrm{Perv}_{\mathrm{thin}}$, $\mathrm{Perv}_{\mathrm{thick}}$ of Iwahori-equivariant perverse sheaves on the thin and thick affine flag varieties associated to a split reductive group $G$. 
    An earlier work of the first author 
    describes $\mathrm{Perv}_{\mathrm{thin}}$ in terms of bimodules over the so-called noncommutative Springer resolution.
    We partly extend this result to $\mathrm{Perv}_{\mathrm{thick}}$,
    providing a similar description for its anti-spherical quotient.
    The long intertwining functor realizes $\mathrm{Perv}_{\mathrm{thick}}$ as the Ringel dual of $\mathrm{Perv}_{\mathrm{thin}}$; we point out that it shares some exactness properties with the similar functor acting on perverse sheaves on the finite-dimensional flag variety.
     We use this result to resolve a conjecture of Arkhipov and the first author, proving that the image in the Iwahori-Whittaker category of any convolution-exact perverse sheaf on the affine flag variety is tilting.
\end{abstract}

\maketitle

\setcounter{tocdepth}{1}
\tableofcontents

\section{Introduction}

In the study of the BGG Category $\mathcal{O}$ via its geometric incarnation as perverse sheaves on the flag variety $G/B$, an important tool is the \emph{Radon transform}, or \emph{long intertwining functor}, studied, for example, in \cite{BeilGin}, \cite{Soerg}, \cite{BBM}. This is an endofunctor of $D^b(\mathcal{O})$ satisfying many favorable properties: it maps injective perverse sheaves to tilting perverse sheaves, tiltings to projectives, and its square is the Serre functor for $D^b(\mathcal{O})$. 
In particular, it exhibits \emph{Ringel self-duality}
of the category $\mathcal O$, i.e. provides an equivalence between that
category and its Ringel dual (see 
 e.g.\ \cite{Koenig},\ \cite{CouMaz},\ \cite{BraMau} for discussion of Ringel duality).

An analogue of this functor exists for the affine flag variety,
see e.g.\ \cite{Soerg} and \cite{Y}. In this case, the Radon transform is no longer an endofunctor of the derived category, but a correspondence between Iwahori-equivariant constructible sheaves on the affine flag variety (which we will always refer to as $X$) and those on the \emph{thick flag variety} (which we refer to as $Y$), an infinite-dimensional variety first considered by Kashiwara in\ \cite{Kash}.

The category $D^b_I(X)$ of Iwahori-equivariant constructible sheaves on the affine flag variety is a categorification of the affine Hecke algebra and was studied in  \cite{AB} and\ \cite{B} along with the Iwahori-Whittaker category $D_{\mathrm{IW}}^b(X)$,
which is a categorification of the anti-spherical module.
In particular, it is shown there that $D_{\mathrm{IW}}^b(X)$
is derived equivalent to the category of $G^\vee$-equivariant modules over a certain noncommutative algebra $A$ with an action 
of the dual group $G^\vee$, while $D^b_I(X)$ is equivalent
to a category of $G^\vee$-equivariant bimodules over $A$
(the algebra $A$ is referred to as the noncommutative Springer resolution).
The subcategories of perverse sheaves in $D^b_I(X)$, $D_{\mathrm{IW}}^b(X)$ also admit a description in these terms:
they turn out to be equivalent to the category of {\em perverse coherent equivariant} (bi)modules over $A$ with respect to the middle perversity.

One can use the above equivalences to equip $D_{\mathrm{IW}}^b(X)$
with another $t$-structure, the image of the tautological $t$-structure on equivariant $A$-modules under the equivalence. We will refer to it as the noncommutative Springer (NCS) $t$-structure. Similarly, we use the same terminology to refer to the image in $D_I^b(X)$ of the tautological $t$-structure on equivariant $A$-bimodules.


The first main result of this paper relates these two ideas: the affine Radon transform and the noncommutative Springer resolution. It shows that the Radon transform on the Iwahori-Whittaker category exchanges this NCS $t$-structure with the perverse $t$-structure, thereby relating the two via Ringel duality.
\begin{theorem}\label{thm:tstructure}
    There is a well-defined $t$-structure on $D^b_{\mathrm{IW}}(X)$ which maps under the Radon transform $R_{X \to Y}$ to the perverse $t$-structure on $D^b_{\mathrm{IW}}(Y)$. It is equal to the \emph{noncommutative Springer $t$-structure} $({}^{\mathrm{mod}}D^{\leq 0}, {}^{\mathrm{mod}}D^{\geq 0})$ arising from the tautological $t$-structure on the category of modules over the noncommutative Springer resolution algebra $A$.
\end{theorem}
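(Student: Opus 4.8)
The plan is to reduce the statement to a combination of three ingredients, available in the literature or established elsewhere in the paper: that the affine Radon transform $R_{X\to Y}$ is an equivalence of triangulated categories; the equivalence $\Phi\colon D^b(A\text{-}\mathrm{mod}^{G^\vee})\xrightarrow{\,\sim\,}D^b_{\mathrm{IW}}(X)$ of \cite{AB},\cite{B} intertwining the perverse $t$-structure with the perverse-coherent one; and the exactness behaviour of the long intertwining functor, modeled on the finite-dimensional case of \cite{BBM},\cite{Soerg}. Since any $t$-structure transports along an equivalence, the first assertion follows at once from the fact that $R_{X\to Y}$ is an equivalence: the preimage of the perverse $t$-structure on $D^b_{\mathrm{IW}}(Y)$ is a bounded, non-degenerate $t$-structure on $D^b_{\mathrm{IW}}(X)$. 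It then remains to identify this preimage with the NCS $t$-structure, i.e.\ to show that the composite equivalence $\Psi:=R_{X\to Y}\circ\Phi\colon D^b(A\text{-}\mathrm{mod}^{G^\vee})\xrightarrow{\,\sim\,}D^b_{\mathrm{IW}}(Y)$ is $t$-exact for the tautological $t$-structure on the source and the perverse $t$-structure on the target.

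To prove this $t$-exactness I would invoke the standard criterion: an equivalence between the bounded derived categories of two abelian categories with enough projectives is $t$-exact as soon as it sends some projective generator to a projective generator. Indeed, if $\mathcal P$ is a projective generator of $A\text{-}\mathrm{mod}^{G^\vee}$ and $\Psi(\mathcal P)$ is a projective generator of $\mathrm{Perv}_{\mathrm{IW}}(Y)$, then the vanishing of higher self-extensions of $\mathcal P$ and of $\Psi(\mathcal P)$ lets one identify $\mathrm{End}(\mathcal P)\cong\mathrm{End}(\Psi(\mathcal P))$ and realize both $\mathrm{RHom}(\mathcal P,-)$ and $\mathrm{RHom}(\Psi(\mathcal P),-)$ as $t$-exact equivalences onto the bounded derived category of modules over this algebra, through which $\Psi$ factors; hence $\Psi$ is $t$-exact. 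So it suffices to prove that $\Psi(\mathcal P)=R_{X\to Y}\big(\Phi(\mathcal P)\big)$ is a projective generator of $\mathrm{Perv}_{\mathrm{IW}}(Y)$, for $\mathcal P$ a projective generator of $A\text{-}\mathrm{mod}^{G^\vee}$ (for instance a suitable sum of free modules twisted by $G^\vee$-representations).

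Here I would first identify $\Phi(\mathcal P)$ inside $D^b_{\mathrm{IW}}(X)$. Being the image of the free module, it is a projective generator of the NCS heart, and, unwinding the chain of equivalences of \cite{AB},\cite{B} through the tilting generator used to define $A$, it is a direct sum of tilting perverse sheaves on $X$; equivalently, the projective objects of the NCS heart coincide with the tilting objects of $\mathrm{Perv}_{\mathrm{IW}}(X)$. Granting this, $R_{X\to Y}(\Phi(\mathcal P))$ is a projective object of $\mathrm{Perv}_{\mathrm{IW}}(Y)$ by the affine analogue of the exactness property of the long intertwining functor on $G/B$ (tiltings on the thin side map to projectives on the thick side --- the statement that realizes $\mathrm{Perv}_{\mathrm{thick}}$ as the Ringel dual of $\mathrm{Perv}_{\mathrm{thin}}$), and it generates $D^b_{\mathrm{IW}}(Y)$ because $\Phi(\mathcal P)$ generates $D^b_{\mathrm{IW}}(X)$ and $R_{X\to Y}$ is an equivalence; hence it is a projective generator, and the theorem follows.

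The main obstacle is precisely this last cluster of inputs, and it is where the real work lies. One must (a) set up the highest-weight formalism on the thick flag variety, in particular the existence of enough projective objects in $\mathrm{Perv}_{\mathrm{IW}}(Y)$ --- on the thick side the roles of projectives and injectives are reversed relative to the thin side --- and (b) establish that $R_{X\to Y}$ is suitably exact, carrying tilting perverse sheaves on $X$ to projective perverse sheaves on $Y$, by transporting the exactness arguments of \cite{BBM} from the finite flag variety together with a computation identifying the image of the coherent tilting generator under the equivalences of \cite{AB},\cite{B} with the tilting perverse sheaves (for example via Wakimoto sheaves). One should also verify that the $G^\vee$-equivariance and internal grading carried throughout do not interfere with the projective-generator criterion; this is routine but must be checked.
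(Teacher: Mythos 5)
Your proposal has a genuine gap at its very first step, and unfortunately that step is where the bulk of the paper's technical work lies. You assert that ``the first assertion follows at once from the fact that $R_{X\to Y}$ is an equivalence: the preimage of the perverse $t$-structure on $D^b_{\mathrm{IW}}(Y)$ is a bounded, non-degenerate $t$-structure on $D^b_{\mathrm{IW}}(X)$.'' But the Radon transform, as available from the prior literature (cf.\ Theorem~\ref{thm:iwequiv}), is an equivalence from $D^b_{\mathrm{IW}}(X)$ onto the \emph{subcategory} $D^b_{\Delta,\mathrm{IW}}(Y)\subset D^b_{\mathrm{IW}}(Y)$ generated by the standard objects, not onto all of $D^b_{\mathrm{IW}}(Y)$. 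Transporting a $t$-structure along such a functor is not automatic: one must first show that the perverse truncation functors on $D^b_{\mathrm{IW}}(Y)$ preserve $D^b_{\Delta,\mathrm{IW}}(Y)$, and this is precisely what fails in the Iwahori-equivariant (non-Whittaker) setting. The counterexample in Section~\ref{sec:counterexamplenaive} exhibits ${}^p\tau_{\geq 2}R_{X\to Y}(\mathrm{IC}_{s_1 s_0})$ as a constant sheaf supported on infinitely many strata, hence not in the image of $R_{X\to Y}$. The paper's Theorem~\ref{thm:tstructdefined} therefore has to do real work: it proves that perverse cohomologies of objects of $D^b_{\Delta,I}(Y)$ are \emph{eventually constant} (Proposition~\ref{prop:eventconst}) and that $\mathrm{Av}_\psi$ annihilates eventually constant sheaves (Proposition~\ref{prop:evconstevzero}), so that in the Iwahori-Whittaker case perverse truncations stay supported on finitely many strata and hence stay inside the image of $R_{X\to Y}^{\mathrm{IW}}$. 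None of this appears in your proposal, and it cannot be dismissed as a formal consequence of equivalence of categories.

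Your second step, identifying the lifted $t$-structure with the NCS $t$-structure via a projective-generator criterion, is close in spirit to what the paper does: the tilting perverse sheaves (coming from the coherent tilting bundle $\mathcal{E}_0$) are projective in the NCS heart (Proposition~\ref{prop:tiltingprojectivemod}), they are carried by $R_{X\to Y}^{\mathrm{IW}}$ to the projectives of $\mathrm{Perv}_{\mathrm{IW}}(Y)$, and one then compares the two bounded $t$-structures via concentration of $\mathrm{RHom}$. So that part of your outline is sound. But it presupposes the $t$-structure is well defined in the first place, so the gap above is fatal to the argument as written.
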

The analogue of Theorem \ref{thm:tstructure} for the Iwahori-equivariant category $D_{I}^b(X)$ (as opposed to the Iwahori-Whittaker category $D_{\mathrm{IW}}^b(X)$) is false. Though the Radon transform for the finite-dimensional flag variety is a derived equivalence, in the case of the affine flag variety it is not essentially surjective as a functor between bounded derived categories, so the preimage of the perverse $t$-structure is not well-defined on $D^b_I(X)$. Accordingly, the bulk of the proof of Theorem \ref{thm:tstructure} is an argument that, in the special case of Iwahori-Whittaker sheaves, one can lift the perverse $t$-structure on thick flags under the Radon transform functor $R_{X \to Y}^{\mathrm{IW}}$ to $D_{\mathrm{IW}}^b(X)$ in a well-defined way.

This serves as one of two examples in this paper of a result which holds for the finite-dimensional flag variety, but whose generalization to the affine flag variety only holds in the Iwahori-Whittaker context. The second is the following, which will serve as our main focus for the second half of this paper. It is a result of Mirkovi{\'c} (c.f. \cite[Remark 7]{AB}) that any convolution-exact perverse sheaf on the usual flag variety (smooth along the Schubert stratification) is tilting. To recall this notion, a \emph{tilting object} is an object admitting both a standard and a costandard filtration. However, Gaitsgory's central sheaves introduced in \cite{GaitsgoryCentral} serve as a class of examples of convolution-exact perverse sheaves on the affine flag variety which are not always tilting, showing that the na{\"i}ve generalization of this fact to the affine flag variety is false. That being said, in\ \cite[Theorem 7]{AB}, the authors show that the image of central sheaves under the Iwahori-Whittaker averaging functor $\mathrm{Av}_{\psi} : D_I^b(X) \to D_{\mathrm{IW}}^b(X)$ are tilting. Accordingly, the authors of loc.\ cit.\ asked the following question.
\begin{question}[\cite{AB}, Remark 10]\label{question}
    For any convolution-exact perverse sheaf $\mathcal{F} \in \mathrm{Perv}_I(X)$, must its image $\mathrm{Av}_{\psi}(\mathcal{F})$ in the Iwahori-Whittaker category $\mathrm{Perv}_{\mathrm{IW}}(X)$ be tilting?
\end{question}

This question was studied from a combinatorial point of view in \cite{Peng}, but no definitive answer was obtained. In the second half of this paper, we use our Theorem \ref{thm:tstructure} to give an affirmative answer to Question \ref{question}.

\begin{theorem}\label{thm:convexacttilt}
    For any convolution-exact perverse sheaf $\mathcal{F} \in \mathrm{Perv}_I(X)$, the object $\mathrm{Av}_{\psi}(\mathcal{F}) \in \mathrm{Perv}_{\mathrm{IW}}(X)$ is tilting.
\end{theorem}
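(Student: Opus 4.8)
The strategy is to transport the claim to the thick flag variety $Y$ via the Radon transform and to recognize tilting objects on the thin side as projective objects on the thick side. Write $\delta = \Delta_e \in \mathrm{Perv}_I(X)$ for the monoidal unit. Then $\mathrm{Av}_\psi(\delta)$ is the clean Whittaker sheaf supported on the closure of the minimal stratum carrying a nonzero Whittaker local system, so it is simultaneously simple, standard, and costandard in $\mathrm{Perv}_{\mathrm{IW}}(X)$; in particular it is a tilting object. Since $\mathrm{Av}_\psi$ and $R_{X \to Y}$ both commute with left $\star$-convolution by $D^b_I(X)$, we get
\[
R_{X\to Y}\bigl(\mathrm{Av}_\psi(\mathcal F)\bigr) \;=\; R_{X\to Y}\bigl(\mathcal F \star \mathrm{Av}_\psi(\delta)\bigr) \;=\; \mathcal F \star \mathcal P, \qquad \mathcal P := R_{X\to Y}\bigl(\mathrm{Av}_\psi(\delta)\bigr).
\]

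The first key point is that, by Theorem~\ref{thm:tstructure} together with the fact that $R_{X \to Y}$ realizes $\mathrm{Perv}_{\mathrm{IW}}(Y)$ as the Ringel dual of $\mathrm{Perv}_{\mathrm{IW}}(X)$, a perverse sheaf in $\mathrm{Perv}_{\mathrm{IW}}(X)$ is tilting if and only if its Radon transform is a projective object of $\mathrm{Perv}_{\mathrm{IW}}(Y)$: Theorem~\ref{thm:tstructure} identifies $\mathrm{Perv}_{\mathrm{IW}}(Y)$ with the heart of the NCS $t$-structure, i.e.\ with the category of $G^\vee$-equivariant modules over $A$, which has enough projectives, and Ringel duality carries the tilting objects of $\mathrm{Perv}_{\mathrm{IW}}(X)$ bijectively onto the projective objects of $\mathrm{Perv}_{\mathrm{IW}}(Y)$. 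Applying this to the tilting object $\mathrm{Av}_\psi(\delta)$ shows that $\mathcal P$ is a projective object of $\mathrm{Perv}_{\mathrm{IW}}(Y)$, and it reduces the theorem to showing that $\mathcal F \star \mathcal P$ is again projective there.

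For this, convolution-exactness of $\mathcal F$ makes $\mathcal F \star (-)$ a perverse $t$-exact endofunctor of $D^b_{\mathrm{IW}}(X)$, and hence — by compatibility with $R_{X\to Y}$ and Theorem~\ref{thm:tstructure}, or directly since convolution with an Iwahori-equivariant sheaf along $Y$ is controlled by the same geometry as along $X$ — also of $D^b_{\mathrm{IW}}(Y)$, so it restricts to an exact endofunctor of $\mathrm{Perv}_{\mathrm{IW}}(Y)$. Its right adjoint is convolution with the $\star$-dual $\mathcal F^\vee$, which is again a convolution-exact perverse sheaf because Verdier duality and the inversion automorphism preserve both perversity and convolution-exactness; hence this adjoint is exact as well. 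An exact functor admitting an exact right adjoint preserves projective objects, so $\mathcal F \star \mathcal P$ is projective, and the theorem follows.

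The main obstacle is the first key point of the second paragraph: making precise and verifying the equivalence ``tilting in $\mathrm{Perv}_{\mathrm{IW}}(X)$ $\iff$ Radon transform projective in $\mathrm{Perv}_{\mathrm{IW}}(Y)$''. The direction sending tiltings to projectives is the usual behaviour of Ringel duality, but since these highest-weight categories have infinitely many simple objects one genuinely needs the description of the NCS heart as a module category to guarantee that $\mathrm{Perv}_{\mathrm{IW}}(Y)$ has enough (and well-behaved) projectives; the converse direction uses that $R_{X\to Y}$ is conservative enough that every projective of $\mathrm{Perv}_{\mathrm{IW}}(Y)$ comes from a tilting object on the thin side. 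The remaining inputs — the identification of $\mathrm{Av}_\psi(\delta)$ with the clean object, the compatibilities of $\mathrm{Av}_\psi$ and $R_{X\to Y}$ with convolution, and the passage of convolution-exactness to the thick side — are routine.
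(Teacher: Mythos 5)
Your reduction --- express $R_{X\to Y}^{\mathrm{IW}}(\mathrm{Av}_\psi(\mathcal F))$ as a convolution and show it is a \emph{projective} perverse sheaf on the thick flag variety --- is a genuinely different route from the paper's, and the ``exact functor with exact right adjoint preserves projectives'' step is a clean idea. The direction-of-convolution issues ($\mathrm{Av}_\psi(\mathcal F) = {}^\psi\Delta_e * \mathcal F$, with the Whittaker unit on the \emph{left} and $\mathcal F$ acting on the \emph{right}), the question of whether $\mathcal F^\vee$ inherits convolution-exactness on the correct side, and the converse ``projective on $Y$ implies tilting on $X$'' are all fixable with care.

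The real gap is the step you dismiss as routine in your last paragraph: that convolution with $\mathcal F$ (and with $\mathcal F^\vee$) is perverse $t$-exact on $D_{\mathrm{IW}}^b(Y)$. Neither justification you offer works. Via Theorem \ref{thm:tstructure}, $R_{X\to Y}^{\mathrm{IW}}$ identifies the perverse $t$-structure on $Y$ with the \emph{NCS} $t$-structure on $D_{\mathrm{IW}}^b(X)$, not with the perverse one; so perverse $t$-exactness on the thick side is equivalent to NCS-exactness on the thin side, which is a different condition from your hypothesis of (perverse) convolution-exactness. And the assertion that ``convolution along $Y$ is controlled by the same geometry as along $X$'' is exactly what fails in this subject: Section \ref{sec:counterexamplenaive} shows that in the non-Whittaker setting the perverse $t$-structure does not even pull back along $R_{X\to Y}$, and the whole point of Theorem \ref{thm:tstructure} is that the Whittaker setting, together with the noncommutative Springer description, is what rescues the statement. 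Indeed, merely establishing that $R_{X\to Y}^{\mathrm{IW}}(\mathrm{Av}_\psi(\mathcal F))$ is perverse --- which you need before ``projective'' is even meaningful --- is already a special case of your claimed exactness (take the projective $\mathcal P = {}^\psi\tilde\Delta_e$), and is precisely the content that the paper extracts from Proposition \ref{prop:convexactisbimod} combined with Theorem \ref{thm:tstructure}. Proposition \ref{prop:convexactisbimod} is the nontrivial input here, proved via Lemma \ref{lem:zerosupp}, the support filtration of \cite[Theorem 54]{B}, and a spreading-out argument following \cite{BM}; your proof contains no analogue of it, so it does not close.
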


Any tilting perverse sheaf in $\mathrm{Perv}_{\mathrm{IW}}(X)$ is always convolution-exact, and so Theorem \ref{thm:convexacttilt} shows that convolution-exact perverse sheaves in $\mathrm{Perv}_{\mathrm{IW}}(X)$ are exactly those which are tilting. This raises the question of whether all convolution-exact perverse sheaves in $\mathrm{Perv}_I(X)$ can be detected after their projection to $\mathrm{Perv}_{\mathrm{IW}}(X)$. In Section \ref{sec:conservative}, we prove the following result.

\begin{theorem}
    The functor $\mathrm{Av}_{\psi}$ is conservative when restricted to the subcategory of $\mathrm{Perv}_I(X)$ consisting of convolution-exact objects.
\end{theorem}

This together with Theorem \ref{thm:convexacttilt} shows that any convolution-exact perverse sheaf on $\mathrm{Perv}_I(X)$ projects to a \emph{nonzero} tilting perverse sheaf in $\mathrm{Perv}_{\mathrm{IW}}(X)$ under the functor $\mathrm{Av}_{\psi}$.

Finally, in Section \ref{sec:bfo}, we recall a result of Bezrukavnikov-Finkelberg-Ostrik from \cite{BFO} which describes the action of the long intertwining functor on the finite Hecke category in terms of two-sided cells for the Weyl group. We explain how our Theorem \ref{thm:tstructure} can be viewed as an analogue of this result in the setting of Iwahori-Whittaker sheaves on the affine flag variety, and discuss a partial generalization to Iwahori-monodromic sheaves on the affine flag variety. To do so, we use a comparison appearing in \cite[Section 11]{B} of the NCS and perverse $t$-structures on $D_{I}^b(X)$ in terms of two-sided cells and nilpotent orbits.

\subsection*{Acknowledgments}
We thank Gurbir Dhillon, Ivan Losev, Alan Peng, and Xinwen Zhu for helpful discussions. The work of R. Bezrukavnikov was partly supported by NSF grant DMS-2101507. The work of C.\ Morton-Ferguson was partly supported by an NSERC PGS-D award.

\section{Preliminaries}

Let $G$ be a reductive group over a finite field $k$ or its algebraic closure with Lie algebra $\mathfrak{g}$ and Langlands dual group $G^\vee$ with Lie algebra $\mathfrak{g}^\vee$. We fix a Borel subgroup $B$ of $G$ and an opposite Borel subgroup $B^-$. Let $W_f$ be the Weyl group and $W$ the affine Weyl group, with ${}^fW$ the set of minimal-length representatives of right cosets of $W_f$ in $W$. Let $S \subset W$ be the set of simple reflections, and let $w_0 \in W_f$ be the longest element of the finite Weyl group.

Let $F = k((t))$, $\mathcal{O}^+ = k[[t]] \subset F$, and $\mathcal{O}^- = k[z^{-1}] \subset F$. Let $I \subset G(F)$ be the Iwahori subgroup given by the preimage of $B$ under the projection $G(\mathcal{O}^+) \to G$, and let $I_u$ be its pro-unipotent radical. Let $X = G(F)/I$ be the affine flag variety (which we sometimes call the \emph{thin} affine flag variety) and $\widetilde{X} = G(F)/I_u$ the extended affine flag variety, both of which are ind-schemes. We let $D_I^b(X)$ be the $I$-equivariant derived category of $\ell$-adic sheaves (for $\ell \neq \mathrm{char}(k)$) on $X$ (as explained in \cite[Section 1.2.1]{B}). Let $({}^pD^{\leq 0}, {}^pD^{\geq 0})$ be the perverse $t$-structure with heart ${}^pD^{\heartsuit} = \mathrm{Perv}_I(X)$.

Now let $I^- \subset G(\mathcal{O}^-)$ be the preimage of $B^-$ under the projection $G(\mathcal{O}^-) \to G$. We let $Y = G(F)/I^-$ be the \emph{thick affine flag variety}, which we sometimes simply call the \emph{thick flag variety}. This is a scheme of infinite type considered in \cite{Kash} which we explain in more detail in Section \ref{sec:thick}. The generalization of the derived category of $\ell$-adic sheaves and the perverse $t$-structure to finite type ind-schemes is straightforward, and treated in\ \cite[A.2]{GaitsgoryCentral}, hence the definition of $D_I^b(X)$ and $\mathrm{Perv}_I(X)$ provided above. Since $Y$ is of infinite type, we will explain a way to define $D_I^b(Y)$ and $\mathrm{Perv}_I(Y)$ via certain truncations of $Y$ and a gluing procedure for dg categories in Section \ref{sec:laxlimit}. Accordingly, throughout the paper we will use the fact that for any space $Z$ with an action of $I$, $D_I^b(Z)$ is a dg category, and that the six-functor formalism gives rise to morphisms of dg categories.

\subsection{Iwahori orbits on the thin and thick flag varieties}\label{sec:iwahoriorbits}

Recall that the $I$-orbits on both $X$ and on $Y$ are parameterized by the affine Weyl group $W$ (c.f.\ \cite[5.2]{Y}). For any $w \in W$, we let $X_{w} = IwI/I$ and $Y_{w} = IwI^-/I^-$. If $y \leq w$, then $X_y \subset \overline{X}_w$ and $Y_{w} \subset \overline{Y}_y$. We note that $\mathrm{dim}(X_w) = \ell(w) = \mathrm{codim}(Y_w)$.

For any $w \in W$, we let $j_{w}^X : X_w \to X$ be the natural inclusion. We then define $\Delta_w \in \mathrm{Perv}_I(X)$ by $\Delta_w = j_{w!}^X(\underline{\mathbb{Q}_{\ell}})[\ell(w)]$. Similarly, $\nabla_w = j_{w*}^X(\underline{\mathbb{Q}_{\ell}})[\ell(w)]$. Finally, $\mathrm{IC}_{w} = j_{w!*}^X(\underline{\mathbb{Q}_{\ell}})[\ell(w)]$, the Goresky-MacPherson extension of the constant sheaf on the orbit $X_w$. We note that $D_I^b(X)$ is equipped with a Verdier duality functor $\mathbb{D}$ and that $\mathbb{D}\Delta_w = \nabla_w$ for any $w \in W$ with $\mathbb{D}\mathrm{IC}_w = \mathrm{IC}_w$.

After the definition of $D_I^b(Y)$ is completed in Section \ref{sec:laxlimit}, it will be clear how to define an analogous standard object in $\mathrm{Perv}_I(Y)$ corresponding to any $w \in W$ which is supported only on the stratum $Y_w$, which we denote by $\tilde{\Delta}_w$ to distinguish it as a sheaf on $Y$ (rather than the $\Delta_w$ which live on $X$).

\subsection{Iwahori-Whittaker averaging}

Here, we follow the setup of \cite[Section 1.6]{AB}. We let $I' \subset G(\mathcal{O}^+)$ be the Iwahori subgroup obtained by taking the preimage of $B^-$ under the projection $G(\mathcal{O}^+) \to G$. This is referred to in loc.\ cit.\ as $I^-$, but we avoid this notation in the present paper due to confusion with the $I^- \subset G(\mathcal{O}^-)$ defined earlier. Let $I_u'$ be its pro-unipotent radical.

In \cite[Section 1.6]{AB}, it is shown how to define a generic character $\psi : I_u' \to \mathbb{G}_a$. We then let $D_{\mathrm{IW}}^b(X)$ be the $(I_u', \psi)$-equivariant derived category of $\ell$-adic sheaves on $X$, with $\mathrm{Perv}_{\mathrm{IW}}(X)$ its subcategory of perverse sheaves. It is shown in \cite[Lemma 1]{AB} that $D_{\mathrm{IW}}^b(X) = D^b(\mathrm{Perv}_{\mathrm{IW}}(X))$.

For any $w \in W$, let $X^w$ be the orbit of $wI$ under $I_u'$, and let $i_w : X^w \to X$ be the inclusion. For $w \in {}^fW$, there exist unique maps $\psi_w : X^w \to \mathbb{G}_a$ defined by $\psi_w(g \cdot wI) = \psi(g)$ for $g \in I_u'$. We now define for any $w \in {}^fW$ the perverse sheaves
\begin{align}
    {}^\psi \Delta_w & = i_{w!}\psi_w^*(\mathbb{AS})[\ell(w)],\\
    {}^\psi \nabla_w & = i_{w*}\psi_w^*(\mathbb{AS})[\ell(w)].
\end{align}
where $\mathbb{AS}$ is the Artin-Schreier sheaf. In the following definition and throughout the paper we use the convolution operations used in \cite{AB} and \cite{B}, c.f. Section \ref{sec:convolution}.

\begin{definition}
    Let $\mathrm{Av}_{\psi} : D_{I}^b(X) \to D_{\mathrm{IW}}^b(X)$ be defined for $\mathcal{F} \in D_{I}^b(X)$ by
    \begin{align}
        \mathrm{Av}_{\psi}(\mathcal{F}) = {}^{\psi}\Delta_e * \mathcal{F}
    \end{align}
\end{definition}

\begin{proposition}[\cite{AB}]
    The functor $\mathrm{Av}_{\psi}$ is $t$-exact and satisfies the following properties:
    \begin{enumerate}[label=(\roman*)]
        \item For any $w \in W$,\begin{align*}
           \mathrm{Av}_{\psi}(\Delta_w) = {}^\psi\Delta_{w'},\\
           \mathrm{Av}_{\psi}(\nabla_w) = {}^\psi\nabla_{w'},
        \end{align*}
        where $w'$ is the unique representative in ${}^fW$ for the left $W_{f}$-coset of $w$ in $W$.
        \item For any $w \not\in {}^fW$,
        \begin{align*}
            \mathrm{Av}_{\psi}(\mathrm{IC}_w) & = 0,
        \end{align*}
        while if $w \in {}^fW$, $\mathrm{Av}_{\psi}(\mathrm{IC}_w)$ is an irreducible object of $\mathrm{Perv}_{\mathrm{IW}}(X)$, which we call ${}^\psi\mathrm{IC}_{w}$. 
    \end{enumerate}
\end{proposition}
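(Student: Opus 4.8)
The plan is to prove the explicit formulas of part (i) first, deduce the $t$-exactness from them formally, and then obtain part (ii) using exactness. I will freely use that $\mathrm{Av}_\psi = {}^\psi\Delta_e \ast (-)$ is left convolution with a fixed object, hence commutes with right convolution, and that $\Delta_a \ast \Delta_b \cong \Delta_{ab}$, $\nabla_a \ast \nabla_b \cong \nabla_{ab}$ whenever $\ell(ab) = \ell(a) + \ell(b)$.

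For part (i) I would write $w = u w'$ with $u \in W_f$, $w' \in {}^fW$ and $\ell(w) = \ell(u) + \ell(w')$; then $\Delta_w \cong \Delta_u \ast \Delta_{w'}$, so $\mathrm{Av}_\psi(\Delta_w) \cong ({}^\psi\Delta_e \ast \Delta_u) \ast \Delta_{w'}$. It then suffices to show (a) ${}^\psi\Delta_e \ast \Delta_u \cong {}^\psi\Delta_e$ for $u \in W_f$, and (b) ${}^\psi\Delta_e \ast \Delta_{v} \cong {}^\psi\Delta_{v}$ for $v \in {}^fW$. Using reduced words — for (b), a reduced word all of whose prefixes lie in ${}^fW$, which exists because $vs < v$, $v \in {}^fW$ forces $vs \in {}^fW$ — together with associativity, both reduce to the case of a single simple reflection $s$, where the convolution takes place on the $\mathbb{P}^1$-fibration attached to the parahoric $P_s$. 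For (b) with $v, vs \in {}^fW$ and $\ell(vs) = \ell(v)+1$, a dimension count shows the convolution correspondence is generically an isomorphism onto $X^{vs}$ and carries the Artin--Schreier system on $X^v$ to that on $X^{vs}$, giving ${}^\psi\Delta_v \ast \Delta_s \cong {}^\psi\Delta_{vs}$; induction finishes (b). For (a), with $s$ a simple reflection of $W_f$, one checks ${}^\psi\Delta_e \ast \Delta_s \cong {}^\psi\Delta_e \cong {}^\psi\Delta_e \ast \nabla_s$ by a direct computation on the associated $\mathbb{P}^1$; this is the step where the genericity of $\psi$ enters, through its nontriviality on the root subgroup $U_{-\alpha_s}$. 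The formula for $\nabla_w$ follows by the parallel computation, or from Verdier duality together with cleanness of the Whittaker sheaf (which yields $\mathbb{D}\circ\mathrm{Av}_\psi\circ\mathbb{D} \cong \mathrm{Av}_{\psi^{-1}}$) and $\mathbb{D}\,{}^\psi\Delta_w = {}^{\psi^{-1}}\nabla_w$.

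Granting (i), the $t$-exactness is formal: $\mathrm{Av}_\psi$ is triangulated and sends each $\Delta_w$, resp.\ $\nabla_w$, to the perverse object ${}^\psi\Delta_{w'}$, resp.\ ${}^\psi\nabla_{w'}$, hence carries $\Delta_w[k]$ into ${}^pD^{\leq 0}$ for $k \geq 0$ and $\nabla_w[k]$ into ${}^pD^{\geq 0}$ for $k \leq 0$; since ${}^pD^{\leq 0}$, resp.\ ${}^pD^{\geq 0}$, is generated under extensions and retracts by such objects, $\mathrm{Av}_\psi$ is right and left $t$-exact. For part (ii), applying the now-exact $\mathrm{Av}_\psi$ to the canonical maps $\Delta_w \twoheadrightarrow \mathrm{IC}_w \hookrightarrow \nabla_w$ gives ${}^\psi\Delta_{w'} \twoheadrightarrow \mathrm{Av}_\psi(\mathrm{IC}_w) \hookrightarrow {}^\psi\nabla_{w'}$. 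When $w \in {}^fW$ we have $w' = w$, the composite is the canonical morphism ${}^\psi\Delta_w \to {}^\psi\nabla_w$, and $\mathrm{Av}_\psi(\mathrm{IC}_w)$ is identified with its image, namely the middle extension $i_{w!\ast}(\psi_w^\ast\mathbb{AS}[\ell(w)])$, which is irreducible — this is the object ${}^\psi\mathrm{IC}_w$. When $w \notin {}^fW$, choose a simple reflection $s$ of $W_f$ with $sw < w$; then $\overline{X}_w$ is left-$P_s$-stable and $\mathrm{IC}_w$, being a direct summand of the left-$P_s$-equivariant complex $\mathrm{IC}_s \ast \mathrm{IC}_{sw}$, is itself left-$P_s$-equivariant. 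But $\mathrm{Av}_\psi$ annihilates every left-$P_s$-equivariant object: realizing the averaging as a $\psi$-twisted pushforward, the portion of the integral over $U_{-\alpha_s} \subset P_s \cap I_u'$ — over which $\psi$ restricts to a nontrivial additive character and along which any $P_s$-equivariant object is constant — already vanishes by Artin--Schreier. Hence $\mathrm{Av}_\psi(\mathrm{IC}_w) = 0$.

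The main obstacle is the genericity computation (a): showing that left-convolving ${}^\psi\Delta_e$ with $\Delta_s$ or $\nabla_s$, for $s$ a simple reflection of $W_f$, leaves it unchanged. This encodes the fact that ${}^\psi\Delta_e$ behaves as a unit along the finite Weyl-group directions, and it is the one place sensitive to the precise choice of generic character; together with the standard $\mathbb{P}^1$-bundle bookkeeping for the $I_u'$-orbits $X^w$ in (b) it is the only non-formal input, since everything downstream of part (i) is a formal consequence of exactness and the geometry of these $\mathbb{P}^1$-fibrations.
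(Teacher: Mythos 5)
The paper states this proposition with only a citation to [AB] and gives no proof of its own, so there is no internal argument to compare against; I can only assess your proof on its merits. It is correct and, as far as I can reconstruct, follows the same strategy as the argument in [AB]: reduce the $\Delta$- and $\nabla$-formulas to the two local facts that ${}^\psi\Delta_e$ is insensitive to right convolution with $\Delta_s$, $\nabla_s$ for $s$ a \emph{finite} simple reflection (genericity of $\psi$ via Artin--Schreier), and that right convolution with a simple reflection moves the Whittaker standard along ${}^fW$ when lengths add; then derive $t$-exactness formally from generation of the aisles by standards and costandards; then derive (ii) by applying exactness to $\Delta_w \twoheadrightarrow \mathrm{IC}_w \hookrightarrow \nabla_w$ and, in the non-minimal case, by killing a left $P_s$-equivariant direct summand.

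Two small remarks on the one non-formal step you flag. First, the cleanest way to package your computation (a) is not a bare hand-count on $\mathbb{P}^1$ but the vanishing ${}^\psi\Delta_e * \mathrm{IC}_s = 0$ (Artin--Schreier: the fiber of $\pi_!{}^\psi\Delta_e$, $\pi : X \to G(F)/P_s$, is $H^*_c(\mathbb{A}^1, \mathbb{AS}) = 0$), combined with the two short exact sequences $0 \to \Delta_e \to \Delta_s \to \mathrm{IC}_s \to 0$ and $0 \to \mathrm{IC}_s \to \nabla_s \to \Delta_e \to 0$; this yields ${}^\psi\Delta_e * \Delta_s \cong {}^\psi\Delta_e \cong {}^\psi\Delta_e * \nabla_s$ without having to track shifts by hand and is precisely the same vanishing you use again in (ii). Second, in (ii) you do not actually need the decomposition theorem to produce left $P_s$-equivariance: for $sw < w$ with $s$ finite, $X_w \cup X_{sw}$ is a single $P_s$-orbit open in $\overline{X}_w$, and $\mathrm{IC}_w$ is the intermediate extension of a $P_s$-equivariant local system on that orbit, hence is $P_s$-equivariant by functoriality of $j_{!*}$. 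Either route is fine; your version via the summand of $\mathrm{IC}_s * \mathrm{IC}_{sw}$ is also valid. The observation that $v \in {}^fW$, $vs < v$ forces $vs \in {}^fW$, which you use to choose a prefix-friendly reduced word, is correct and is the right lemma to make the induction in (b) run.

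Overall the proposal is a sound reconstruction of the [AB] argument; there is no gap.
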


\begin{definition}[\cite{AB}]
    Let ${}^f\mathrm{Perv}_I(X)$ be the Serre quotient of the abelian category $\mathrm{Perv}_I(X)$ by the Serre subcategory generated under extensions by $\mathrm{IC}_w$ for $w \not\in {}^fW$.
\end{definition}

\begin{proposition}[\cite{AB}, Theorem 2]
    The functor $\mathrm{Av}_{\psi}$ induces an equivalence ${}^f\mathrm{Perv}_{I}(X) \to \mathrm{Perv}_{\mathrm{IW}}(X)$. 
\end{proposition}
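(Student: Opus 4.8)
The plan is first to check that $\mathrm{Av}_\psi$ descends to the Serre quotient for purely formal reasons, and then to identify the resulting exact functor as an equivalence of highest weight categories by tracking its effect on simple, standard and costandard objects. By the preceding proposition $\mathrm{Av}_\psi\colon \mathrm{Perv}_I(X)\to\mathrm{Perv}_{\mathrm{IW}}(X)$ is exact, and by part (ii) it sends $\mathrm{IC}_w$ to $0$ for every $w\notin {}^fW$. The Serre subcategory defining ${}^f\mathrm{Perv}_I(X)$ is generated under extensions by exactly these simple objects, so $\mathrm{Av}_\psi$ annihilates it and, by the universal property of Serre quotients, factors uniquely as an exact functor $\overline{\mathrm{Av}}_\psi\colon {}^f\mathrm{Perv}_I(X)\to\mathrm{Perv}_{\mathrm{IW}}(X)$. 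It remains to prove that $\overline{\mathrm{Av}}_\psi$ is an equivalence.

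The simple objects of a Serre quotient $\mathcal A/\mathcal S$ are precisely the images of the simple objects of $\mathcal A$ that do not lie in $\mathcal S$, with distinct ones remaining distinct; hence the simples of ${}^f\mathrm{Perv}_I(X)$ are the images ${}^f\mathrm{IC}_w$ of $\mathrm{IC}_w$ for $w\in {}^fW$. On the target side, by the basic structure of the Iwahori--Whittaker category (cf.\ \cite[Section~1.6]{AB} and the definitions of ${}^\psi\Delta_w,{}^\psi\nabla_w$ above) the strata of $X$ carrying a $(I_u',\psi)$-equivariant local system — necessarily of rank one — are exactly those indexed by ${}^fW$, so the simples of $\mathrm{Perv}_{\mathrm{IW}}(X)$ are exactly the ${}^\psi\mathrm{IC}_w$ with $w\in {}^fW$. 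By part (ii) of the preceding proposition $\overline{\mathrm{Av}}_\psi({}^f\mathrm{IC}_w)={}^\psi\mathrm{IC}_w$, so $\overline{\mathrm{Av}}_\psi$ is a bijection on isomorphism classes of simples. Moreover, by part (i) and exactness, $\overline{\mathrm{Av}}_\psi$ carries the image $\overline\Delta_w$ of $\Delta_w$ to ${}^\psi\Delta_w$ and the image $\overline\nabla_w$ of $\nabla_w$ to ${}^\psi\nabla_w$ (for $w\in {}^fW$), and hence sends $\overline\Delta$-filtered objects to ${}^\psi\Delta$-filtered ones and $\overline\nabla$-filtered objects to ${}^\psi\nabla$-filtered ones.

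Now $\mathrm{Perv}_{\mathrm{IW}}(X)$ is a highest weight category with poset $({}^fW,\leq)$, standards ${}^\psi\Delta_w$ and costandards ${}^\psi\nabla_w$, and the next step is to verify that ${}^f\mathrm{Perv}_I(X)$ is again highest weight, with the same poset and with $\overline\Delta_w$ and $\overline\nabla_w$ as its standards and costandards. Granting this, $\overline{\mathrm{Av}}_\psi$ is an exact functor between highest weight categories that is bijective on the common poset and carries standards to standards and costandards to costandards, and any such functor is automatically an equivalence. Concretely, the behaviour on $\Delta$'s and $\nabla$'s together with $\mathrm{Hom}(\overline\Delta_v,\overline\nabla_w)=\delta_{vw}\cdot\mathbb{C}$, $\mathrm{Ext}^{>0}(\overline\Delta_v,\overline\nabla_w)=0$ (and likewise for ${}^\psi\Delta,{}^\psi\nabla$) lets one show, by computing $\mathrm{Ext}^\bullet(\Delta_v,-)$ via finite $\nabla$-filtered coresolutions, that the induced triangulated functor $D^b({}^f\mathrm{Perv}_I(X))\to D^b(\mathrm{Perv}_{\mathrm{IW}}(X))$ is fully faithful; bijectivity on the poset gives essential surjectivity (the $\Delta$'s generate), and $t$-exactness for the standard $t$-structures then returns the equivalence ${}^f\mathrm{Perv}_I(X)\xrightarrow{\sim}\mathrm{Perv}_{\mathrm{IW}}(X)$, which is $\overline{\mathrm{Av}}_\psi$.

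The main obstacle is the input invoked at the start of the previous paragraph: that the Serre quotient ${}^f\mathrm{Perv}_I(X)$ is again highest weight with standards $\overline\Delta_w$ and costandards $\overline\nabla_w$, and in particular the computation of the $\mathrm{Hom}$ and $\mathrm{Ext}^1$ groups between them there. Since $W\setminus {}^fW$ is not a lower set for the Bruhat order on $W$ (e.g.\ a finite simple reflection $s$ lies in it but $e\leq s$ does not), this does \emph{not} follow from the standard theory of highest weight quotients by a Serre subcategory generated by an order ideal of simples; one must instead analyse directly how the Serre localization affects morphism spaces — equivalently, control for each $w\in {}^fW$ the largest subobject of $\Delta_w$ all of whose composition factors are $\mathrm{IC}_v$ with $v\notin {}^fW$, and dually the corresponding quotient of $\nabla_w$. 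An alternative route, which relocates rather than removes this difficulty, is to produce a right adjoint of $\mathrm{Av}_\psi$ by convolution with ${}^\psi\nabla_e$ and to show that its unit and counit become isomorphisms after descent to ${}^f\mathrm{Perv}_I(X)$ and $\mathrm{Perv}_{\mathrm{IW}}(X)$; the work then lies in evaluating these natural transformations on standard generators.
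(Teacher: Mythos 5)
The paper does not prove this statement; it cites it as \cite[Theorem 2]{AB}, so there is no internal proof to compare against. Your proposal should therefore be judged on its own.

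The first half of your argument is fine: exactness of $\mathrm{Av}_\psi$, the vanishing $\mathrm{Av}_\psi(\mathrm{IC}_w)=0$ for $w\notin{}^fW$, and the universal property of Serre quotients give the descended exact functor $\overline{\mathrm{Av}}_\psi$, and the bijection on simples, standards and costandards is as you say. The difficulty is exactly where you place it, and your diagnosis is correct: because $W\setminus{}^fW$ is not an order ideal for Bruhat order (a finite simple reflection $s$ lies in it while $e\leq s$ does not), the Serre subcategory is not a ``support on a closed set'' subcategory, and there is no off-the-shelf theorem saying the quotient is again highest weight with $\overline\Delta_w,\overline\nabla_w$ as (co)standards. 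Concretely, one must rule out that Serre localization enlarges $\mathrm{Hom}(\overline\Delta_v,\overline\nabla_w)$ or creates $\mathrm{Ext}^1(\overline\Delta_v,\overline\nabla_w)$, which amounts to controlling, for $w\in{}^fW$, the largest subobject of $\Delta_w$ with all composition factors indexed outside ${}^fW$, and dually for $\nabla_w$. You flag this honestly but do not resolve it, so the proof as written has a genuine gap.

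The route that closes the gap is essentially the ``alternative'' you mention at the end: use the (co)averaging adjunction, for instance $\mathrm{Av}_{I_u}$ with $\mathrm{Av}_{I_u}\mathrm{Av}_\psi(\mathcal{F})\cong\Xi*\mathcal{F}$, compute what the unit and counit do on standard and costandard generators, and deduce full faithfulness of $\overline{\mathrm{Av}}_\psi$ directly rather than by first establishing the highest weight structure of the quotient abstractly. This is also closer in spirit to how \cite{AB} actually argues. As it stands, your write-up correctly reduces the theorem to a nontrivial claim about ${}^f\mathrm{Perv}_I(X)$ that still needs to be proved; either supply that argument or replace the final step with the adjoint-pair computation.
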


Once the construction of $D_I^b(Y)$ is complete in Section \ref{sec:laxlimit}, we will see in Section \ref{sec:iwahorithick} that similar results hold for the thick flag variety, and that there is a well-defined functor $\mathrm{Av}_{\psi} : D_{I}^b(Y) \to D_{\mathrm{IW}}^b(Y)$.

\subsection{Convolution}\label{sec:convolution}

We recall that $D_{I}^b(X)$ is a monoidal category under convolution of sheaves, and that $D_{\mathrm{IW}}^b(X)$ is a right module for this category, again under convolution. One can also check that $D_I^b(Y)$ is a left module over $D_I^b(X)$ under convolution. We denote all these convolution operations by $*$. 

\begin{definition}
    We say that an object $\mathcal{F}$ in either $\mathrm{Perv}_I(X)$ or $\mathrm{Perv}_{\mathrm{IW}}(X)$ is \emph{convolution-exact} if for any $\mathcal{G} \in \mathrm{Perv}_{I}(X)$, we have that $\mathcal{F} * \mathcal{G}$ is perverse.
\end{definition}

\subsection{Two categorifications of the affine Hecke algebra}\label{sec:twocats}

We now review the results of \cite{BM} and \cite{B}, which establish an equivalence between constructible and coherent categorifications of the affine Hecke algebra.

Let $\mathcal{N}$ be the nilpotent cone in $\mathfrak{g}^\vee$ and $\pi : \tilde{\mathcal{N}} \to \mathcal{N}$ the Springer resolution, using the same notation $\pi : \tilde{\mathfrak{g}}^{\vee} \to \mathfrak{g}^\vee$ for the Grothendieck-Springer resolution. Let $\mathrm{St}' = \tilde{\mathfrak{g}}^\vee \times_{\mathfrak{g}^\vee} \tilde{\mathcal{N}}$. Until now, we have considered the $I$-equivariant derived category on the affine flag variety $X = G(F)/I$. We will now also consider the derived category $D_{I_u}(X)$ of $I_u$-monodromic sheaves on $X$, where $I_u$ is the pro-unipotent radical of $I$.

Finally, we let $A$ and $A^0$ be the \emph{noncommutative Springer algebras} from \cite{B}, first introduced in \cite{BM}, with $A^0$ a quotient of $A$. Their main role is explained in the next theorem. Each of them comes equipped with a $G^\vee$ action and is a $\mathcal{O}(\mathfrak{g}^\vee)$-algebra, i.e. a sheaf of noncommutative algebras on $\mathfrak{g}^\vee$. We let $\hat{A}$ denote the pullback of $A$ to the formal neighborhood of $0 \in \mathfrak{g}^\vee$, i.e. $\hat{A} = A \otimes_{\mathcal{O}(\mathfrak{g}^\vee)} \mathcal{O}(\hat{0})$.

\begin{theorem}[\cite{AB}, \cite{BM}, \cite{B}]\label{thm:equivs}
    There are derived equivalences
    \begin{align}
        D_{I}^b(X) \cong \mathrm{DGCoh}^{G^\vee}(\tilde{\mathcal{N}}\times_{\mathfrak{g}^\vee}^{\mathrm{L}} \tilde{\mathcal{N}})\cong D^b\left(A^0 \otimes_{\mathcal{O}(\mathfrak{g}^\vee)} (A^0)^{\mathrm{op}} - \mathrm{mod}_{\mathrm{fg}}^{G^\vee}\right)\label{eqn:dgcoh}\\
        D_{I_u}^b(X) \cong D^b\left(\mathrm{Coh}^{G^\vee}(\mathrm{St}')\right) \cong D^b\left(A \otimes_{\mathcal{O}(\mathfrak{g}^\vee)} (A^0)^{\mathrm{op}} - \mathrm{mod}_{\mathrm{fg}}^{G^\vee}\right)\label{eqn:dbcoh}
    \end{align}
    and
    \begin{align}
        D_{\mathrm{IW}}^b(X) \cong D^b(\mathrm{Coh}^{G^\vee}(\tilde{\mathcal{N}})) \cong D^b(A^0-\mathrm{mod}^{G^\vee}_{\mathrm{fg}}),\label{eqn:iwcoh}
    \end{align}
    and these equivalences intertwine the monoidal structure on $D_I^b(X)$ via convolution with derived tensor product of complexes of coherent sheaves and modules, along with the action of $D_{I}^b(X)$ on $D_{\mathrm{IW}}^b(X)$.
\end{theorem}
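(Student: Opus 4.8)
Since the theorem compiles the main results of \cite{BM}, \cite{AB} and \cite{B}, the plan is to recall each of them and then check that the equivalences are compatible with convolution. I would begin on the coherent side. By \cite{BM}, the Grothendieck-Springer resolution $\tilde{\mathfrak g}^\vee$ and the Springer resolution $\tilde{\mathcal N}$ carry $G^\vee$-equivariant tilting generators $\mathcal E$ and $\mathcal E^0$, and $A = \mathrm{End}(\mathcal E)$, $A^0 = \mathrm{End}(\mathcal E^0)$ by definition, regarded as sheaves of $\mathcal O(\mathfrak g^\vee)$-algebras via $\pi$; the structural facts I would take from loc.\ cit.\ are that $A$ is flat over $\mathcal O(\mathfrak g^\vee)$; that $\mathcal E^0$ is the restriction of $\mathcal E$ to $\tilde{\mathcal N}$, so that $A^0 = A/(I_{\mathcal N}A)$ with $I_{\mathcal N}\subset\mathcal O(\mathfrak g^\vee)$ the ideal of $\mathcal N$; and that, being tilting generators, $R\mathrm{Hom}(\mathcal E,-)$ and $R\mathrm{Hom}(\mathcal E^0,-)$ define derived Morita equivalences $D^b(\mathrm{Coh}^{G^\vee}(\tilde{\mathfrak g}^\vee))\cong D^b(A\text{-}\mathrm{mod}^{G^\vee}_{\mathrm{fg}})$ and $D^b(\mathrm{Coh}^{G^\vee}(\tilde{\mathcal N}))\cong D^b(A^0\text{-}\mathrm{mod}^{G^\vee}_{\mathrm{fg}})$. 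This provides the second isomorphism in each of \eqref{eqn:dgcoh}, \eqref{eqn:dbcoh}, \eqref{eqn:iwcoh} once the first is established.

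For the first isomorphisms I would quote the three geometric equivalences and push the tilting-generator description through the relevant fiber products. The equivalence $D^b_{\mathrm{IW}}(X)\cong D^b(\mathrm{Coh}^{G^\vee}(\tilde{\mathcal N}))$ of \eqref{eqn:iwcoh} is the main theorem of \cite{AB}, built by sending the standards ${}^\psi\Delta_w$ ($w\in{}^fW$) to explicit Serre-twisted structure sheaves of subvarieties of $\tilde{\mathcal N}$ and checking that convolution by Wakimoto sheaves matches the affine braid group action on the coherent side, which determines the functor uniquely. The equivalence $D^b_{I_u}(X)\cong D^b(\mathrm{Coh}^{G^\vee}(\mathrm{St}'))$ of \eqref{eqn:dbcoh} is the main theorem of \cite{B}: on the coherent side the external product $\mathcal E\boxtimes\mathcal E^0$ restricts to a tilting generator of $\mathrm{St}' = \tilde{\mathfrak g}^\vee\times_{\mathfrak g^\vee}\tilde{\mathcal N}$, whose endomorphism algebra is $A\otimes_{\mathcal O(\mathfrak g^\vee)}(A^0)^{\mathrm{op}}$ (no derived correction is needed here, since $A$ is flat over $\mathcal O(\mathfrak g^\vee)$), and Morita then gives the second isomorphism of \eqref{eqn:dbcoh}.

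To obtain \eqref{eqn:dgcoh} I would pass from the $I_u$-monodromic category to the $I$-equivariant one, which on the coherent side is derived restriction of the first factor along $\mathcal N\hookrightarrow\mathfrak g^\vee$: this replaces $\tilde{\mathfrak g}^\vee$ by $\tilde{\mathfrak g}^\vee\times^{\mathrm L}_{\mathfrak g^\vee}\mathcal N = \tilde{\mathcal N}$ and $A$ by $A^0$, turning $\mathrm{St}'$ into $\tilde{\mathcal N}\times^{\mathrm L}_{\mathfrak g^\vee}\tilde{\mathcal N}$. The point requiring care is that this fiber product is genuinely derived: $A^0$, being supported on $\mathcal N$, is \emph{not} flat over $\mathcal O(\mathfrak g^\vee)$, so $\mathrm{Tor}^{\mathcal O(\mathfrak g^\vee)}_{>0}(A^0,(A^0)^{\mathrm{op}})\neq 0$ and the tensor product of $A^0$-bimodules must be derived, which is exactly the content of the superscript $\mathrm L$ in \eqref{eqn:dgcoh}; this yields the displayed chain with bimodule algebra $A^0\otimes_{\mathcal O(\mathfrak g^\vee)}(A^0)^{\mathrm{op}}$. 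Finally, for the monoidal and module statements I would argue as follows. Convolution on $D^b_I(X)$ is an integral transform along a correspondence over $X\times X$, and under the kernel description of $\mathrm{DGCoh}^{G^\vee}(\tilde{\mathcal N}\times^{\mathrm L}_{\mathfrak g^\vee}\tilde{\mathcal N})$ it becomes composition of kernels over the middle copy of $\tilde{\mathcal N}$, i.e.\ $\otimes^{\mathrm L}_{A^0}$ of $A^0$-bimodules; likewise the $D^b_I(X)$-action on $D^b_{\mathrm{IW}}(X)\cong D^b(\mathrm{Coh}^{G^\vee}(\tilde{\mathcal N}))$ becomes the $\otimes^{\mathrm L}_{A^0}$-action on left $A^0$-modules. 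Since $D^b_I(X)$ is generated as a triangulated category by the standards $\Delta_w$, with $\Delta_w*\Delta_{w'}\cong\Delta_{ww'}$ whenever $\ell(ww') = \ell(w)+\ell(w')$, it suffices to match these with the corresponding multiplicativity of the standard (Wakimoto-type) bimodules and propagate; these verifications are carried out in \cite{AB} and \cite{B}.

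The genuinely hard inputs lie outside this assembly: the construction of the tilting generators $\mathcal E$, $\mathcal E^0$ and of the algebras $A$, $A^0$ in \cite{BM} (via reduction to positive characteristic and a localization theorem), and the identification of the Iwahori-Whittaker category with $D^b(\mathrm{Coh}^{G^\vee}(\tilde{\mathcal N}))$ in \cite{AB}. Within the orchestration above, the subtlest point will be the derived fiber product in \eqref{eqn:dgcoh}: one must track the $\mathrm{Tor}$ contributions precisely, since the naive statement of \eqref{eqn:dgcoh} with the classical Steinberg variety $\tilde{\mathcal N}\times_{\mathfrak g^\vee}\tilde{\mathcal N}$ in place of the derived self-intersection would be false.
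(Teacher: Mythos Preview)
The paper does not give a proof of this theorem: it is stated as a citation of results from \cite{AB}, \cite{BM}, and \cite{B}, with the remark immediately following that ``for details on (\ref{eqn:dgcoh}), we refer the reader to \cite{B}.'' Your proposal is therefore not being compared against a proof in the paper but rather against the literature it cites, and as a roadmap through those references your sketch is broadly accurate and well-organized.

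Two small points of alignment with the paper's conventions. First, the paper defines $A = \mathrm{End}(\mathcal E)^{\mathrm{op}}$ and $A^0 = \mathrm{End}(\mathcal E_0)^{\mathrm{op}}$, with the opposite; you wrote $A = \mathrm{End}(\mathcal E)$, which flips left and right modules throughout. Second, your claim that $A^0 = A/(I_{\mathcal N}A)$ is not automatic from $\mathcal E_0 = \mathcal E|_{\tilde{\mathcal N}}$: the endomorphism algebra of a restriction need not be the naive quotient of the endomorphism algebra. The paper only asserts that $A^0$ is a quotient of $A$, and in \cite{BM} this is established using specific properties of the tilting bundle, not by general nonsense; you should cite that rather than treat it as a formal consequence. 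Otherwise your identification of the key inputs (tilting generators from \cite{BM}, the Iwahori--Whittaker equivalence from \cite{AB}, the Steinberg equivalence and monoidal compatibility from \cite{B}, and the necessity of the derived fiber product in \eqref{eqn:dgcoh}) matches what the paper invokes.
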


For details on (\ref{eqn:dgcoh}), we refer the reader to \cite{B}. In the present paper, to avoid dealing with the technicalities of derived algebraic geometry, we will sometimes forget $I$-equivariance of an element of $D_{I}^b(X)$ and remember only the $I_u$-monodromic structure so that we can simply work with the equivalence (\ref{eqn:dbcoh}), which along with (\ref{eqn:iwcoh}) will be enough for our purposes.

For any $\mathcal{F} \in D_{I_u}^b(X)$, we will write $\mathcal{A}_{\mathcal{F}}$ and $\mathcal{M}_{\mathcal{F}}$ to denote the corresponding elements of $D^b\left(\mathrm{Coh}^{G^\vee}(\mathrm{St}')\right)$ and $D^b\left(A \otimes_{\mathcal{O}(\mathfrak{g}^\vee)} (A^0)^{\mathrm{op}} - \mathrm{mod}_{\mathrm{fg}}^{G^\vee}\right)$ respectively under the equivalences in Theorem \ref{thm:equivs}.

We will use ${}^{p}{H}^i$ to denote the $i$th cohomology functor with respect to the perverse $t$-structure on $D_{I}(X)$ or $D_{\mathrm{IW}}(X)$, ${}^{\mathrm{mod}}H^i$ for the $i$th cohomology functor with respect to the tautological $t$-structure on $D^b(A^0-\mathrm{mod}^{G^\vee}_{\mathrm{fg}})$, and ${}^{\mathrm{bimod}}H^i$ for the tautological $t$-structure on on $D^b(A \otimes_{\mathcal{O}(\mathfrak{g}^\vee)} (A^0)^{\mathrm{op}} - \mathrm{mod}_{\mathrm{fg}}^{G^\vee})$.

\begin{prop}[c.f. Proposition 52 in \cite{B}]\label{prop:commutes}
    The diagram
        \[\begin{tikzcd}
            D^b(\mathrm{Coh}^{G^\vee}(\mathrm{St}')) \arrow[r] \arrow[d, "\mathrm{pr}_{2*}"] & D_{I_u}^b(X) \arrow[d, "\mathrm{Av}_{\psi}"]\\
            D^b(\mathrm{Coh}^{G^\vee}(\tilde{\mathcal{N}})) \arrow[r] & D_{\mathrm{IW}}^b(X)
        \end{tikzcd}\]
    commutes, where $\mathrm{pr}_{2} : \mathrm{St}' \to \tilde{\mathcal{N}}$ is the natural projection.
    \end{prop}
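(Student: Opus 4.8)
The plan is to reduce the statement to the compatibility of the equivalences in Theorem~\ref{thm:equivs} with the appropriate correspondences, and then to track the geometric map $\mathrm{pr}_2 : \mathrm{St}' \to \tilde{\mathcal{N}}$ through the known dictionary. First I would recall that the Iwahori–Whittaker averaging functor $\mathrm{Av}_\psi = {}^\psi\Delta_e * (-)$ is realized, under the monoidal/module compatibility asserted in Theorem~\ref{thm:equivs}, as convolution with a specific object on the coherent side. Concretely, the object ${}^\psi\Delta_e \in D^b_{\mathrm{IW}}(X)$ corresponds under \eqref{eqn:iwcoh} to the structure sheaf $\mathcal{O}_{\tilde{\mathcal N}}$ (the monoidal unit for the module action, up to a shift/twist normalization), and the action of $D^b_{I_u}(X) \cong D^b(\mathrm{Coh}^{G^\vee}(\mathrm{St}'))$ on $D^b_{\mathrm{IW}}(X) \cong D^b(\mathrm{Coh}^{G^\vee}(\tilde{\mathcal N}))$ is by the integral kernel supported on $\mathrm{St}' = \tilde{\mathfrak g}^\vee \times_{\mathfrak g^\vee} \tilde{\mathcal N}$, viewed as a correspondence between $\tilde{\mathcal N}$ and $\tilde{\mathcal N}$ via the two projections. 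Thus for $\mathcal{A} \in D^b(\mathrm{Coh}^{G^\vee}(\mathrm{St}'))$, the image of $\mathrm{Av}_\psi$ applied to the corresponding constructible sheaf is computed as $\mathrm{pr}_{2*}(\mathcal{A} \otimes^{\mathrm L} \mathrm{pr}_1^* \mathcal{O}_{\tilde{\mathcal N}})$, and since $\mathrm{pr}_1^*\mathcal{O}_{\tilde{\mathcal N}} = \mathcal{O}_{\mathrm{St}'}$, this is exactly $\mathrm{pr}_{2*}\mathcal{A}$.

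The key steps, in order, are: (1) identify the object of $D^b(\mathrm{Coh}^{G^\vee}(\tilde{\mathcal N}))$ corresponding to ${}^\psi\Delta_e$ under \eqref{eqn:iwcoh} — this should follow from the construction of the equivalence in \cite{AB}, \cite{B}, where ${}^\psi\Delta_e$ (being the ``unit'' standard for the Whittaker category) maps to $\mathcal{O}_{\tilde{\mathcal N}}$; (2) express the module action of $D^b(\mathrm{Coh}^{G^\vee}(\mathrm{St}'))$ on $D^b(\mathrm{Coh}^{G^\vee}(\tilde{\mathcal N}))$ explicitly as a Fourier–Mukai-type convolution with kernel $\mathcal{O}_{\mathrm{St}'}$ along the correspondence $\tilde{\mathcal N} \xleftarrow{\mathrm{pr}_1} \mathrm{St}' \xrightarrow{\mathrm{pr}_2} \tilde{\mathcal N}$ — this is part of the ``intertwines the monoidal structure'' clause of Theorem~\ref{thm:equivs}, once one restricts the general $\mathrm{St}'$-over-$\tilde{\mathfrak g}^\vee$ action to the nilpotent locus; (3) compute: $\mathrm{Av}_\psi(\mathcal F) = {}^\psi\Delta_e * \mathcal F$ corresponds to $\mathrm{pr}_{2*}\big(\mathcal{A}_{\mathcal F} \otimes^{\mathrm L}_{\mathcal{O}_{\mathrm{St}'}} \mathrm{pr}_1^*\mathcal{O}_{\tilde{\mathcal N}}\big) \cong \mathrm{pr}_{2*}\mathcal{A}_{\mathcal F}$, giving the commuting square. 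Compatibility of this with Proposition~52 of \cite{B} (which handles a related diagram) should let me borrow most of the bookkeeping.

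The main obstacle I anticipate is pinning down the \emph{normalizations}: making sure the shift and Tate-twist conventions in the definitions of $\Delta_e$, ${}^\psi\Delta_e$, and the equivalences in Theorem~\ref{thm:equivs} are mutually consistent, so that ${}^\psi\Delta_e$ really does go to $\mathcal{O}_{\tilde{\mathcal N}}$ on the nose rather than a shift or line-bundle twist of it (which would alter $\mathrm{pr}_{2*}$ by that same twist and break the stated diagram). A related subtlety is the passage from the $I$-equivariant category of \eqref{eqn:dgcoh} to the $I_u$-monodromic category of \eqref{eqn:dbcoh} — one must check that forgetting equivariance is compatible with $\mathrm{Av}_\psi$ on both sides, i.e.\ that $\mathrm{pr}_{2*}$ as a functor $D^b(\mathrm{Coh}^{G^\vee}(\mathrm{St}')) \to D^b(\mathrm{Coh}^{G^\vee}(\tilde{\mathcal N}))$ is indeed the one induced by the Whittaker averaging and not its spherical-averaging variant. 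Beyond these normalization checks, the argument is essentially a formal consequence of the structure already recorded in Theorem~\ref{thm:equivs} and the cited results of \cite{AB}, \cite{BM}, \cite{B}.
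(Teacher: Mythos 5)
The paper does not prove this proposition at all — it is recorded as a direct citation to Proposition~52 of \cite{B}, with no argument given — so there is no internal proof against which to compare. Your sketch aims to \emph{derive} the statement from the monoidal compatibility in Theorem~\ref{thm:equivs}, which is a reasonable thing to want, and your identification of ${}^\psi\Delta_e$ with $\mathcal{O}_{\tilde{\mathcal{N}}}$ under \eqref{eqn:iwcoh} is consistent with what the paper says elsewhere (e.g.\ the proof of Proposition~\ref{prop:tiltingprojectivemod}).

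However, the central step of your computation rests on a geometric misidentification. You describe $\mathrm{St}' = \tilde{\mathfrak{g}}^\vee\times_{\mathfrak{g}^\vee}\tilde{\mathcal{N}}$ as ``a correspondence between $\tilde{\mathcal{N}}$ and $\tilde{\mathcal{N}}$ via the two projections'' and then write $\mathrm{pr}_1^*\mathcal{O}_{\tilde{\mathcal{N}}}$; but in the paper's conventions $\mathrm{pr}_1\colon\mathrm{St}'\to\tilde{\mathfrak{g}}^\vee$, not $\tilde{\mathcal{N}}$, so $\mathrm{St}'$ is a correspondence from $\tilde{\mathfrak{g}}^\vee$ to $\tilde{\mathcal{N}}$, and the Fourier--Mukai formula $\mathrm{pr}_{2*}\bigl(\mathcal{A}\otimes^{\mathrm{L}}\mathrm{pr}_1^*\mathcal{O}_{\tilde{\mathcal{N}}}\bigr)$ does not type-check. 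The self-correspondence governing the module action of the Hecke category on $D^b(\mathrm{Coh}^{G^\vee}(\tilde{\mathcal{N}}))$ is the (derived) Steinberg $\tilde{\mathcal{N}}\times^{\mathrm{L}}_{\mathfrak{g}^\vee}\tilde{\mathcal{N}}$ appearing in \eqref{eqn:dgcoh} as the model for $D_I^b(X)$, not $\mathrm{St}'$ (the model for $D_{I_u}^b(X)$). Applying your reasoning on the correct correspondence yields $q_{2*}(\iota^*\mathcal{A})$ where $\iota\colon\tilde{\mathcal{N}}\times^{\mathrm{L}}_{\mathfrak{g}^\vee}\tilde{\mathcal{N}}\hookrightarrow\mathrm{St}'$ and $q_2$ is the second projection of the Steinberg; identifying this with the bare pushforward $\mathrm{pr}_{2*}\mathcal{A}$ on all of $D^b(\mathrm{Coh}^{G^\vee}(\mathrm{St}'))$ is precisely the nontrivial content of Proposition~52 of \cite{B}, and is not a formal consequence of $\mathrm{pr}_1^*\mathcal{O}=\mathcal{O}_{\mathrm{St}'}$. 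So the conclusion you land on is right, but the intermediate Fourier--Mukai manipulation does not actually establish it; one would instead need to track how $\mathrm{Av}_\psi$ is built into the construction of the equivalences of Theorem~\ref{thm:equivs}, which is what the cited reference does.
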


We now explain the nature of the algebras $A$ and $A^0$ by recalling a main tool used in \cite{B} to prove the equivalences in Theorem \ref{thm:equivs}, namely the \emph{tilting generator}. In \cite[1.4.2]{BM}, it is explained that there exists a vector bundle $\mathcal{E}$ on $\tilde{\mathfrak{g}}^\vee$ and $\mathcal{E}_0 = \mathcal{E}|_{\tilde{\mathcal{N}}}$ on $\tilde{\mathcal{N}}$ such that 
\begin{align}
    A = \mathrm{End}(\mathcal{E})^{\mathrm{op}},\\
    A^0 = \mathrm{End}(\mathcal{E}_0)^{\mathrm{op}}.
\end{align}
We let $\mathcal{T}$ denote the element of $D_{\mathrm{IW}}^b(X)$ obtained from $\mathcal{E}_0$ under the equivalence (\ref{eqn:iwcoh}) of Theorem \ref{thm:equivs}. It is explained in \cite{BM} that the NCS $t$-structure $({}^{\mathrm{mod}}D^{\leq 0}, {}^{\mathrm{mod}}D^{\geq 0})$ on $D_{\mathrm{IW}}(X)$ is characterized completely by (non-equivariant) morphisms out of $\mathcal{E}_0$, i.e.
\begin{align}
    \mathcal{F} \in {}^{\mathrm{mod}}D^{\leq 0} & \iff \mathrm{RHom}^{> 0}_{D^b(\mathrm{Coh}(\tilde{\mathcal{N}}))}(\mathcal{E}_0, \mathcal{F}) = 0, \\
    \mathcal{F} \in {}^{\mathrm{mod}}D^{\geq 0} & \iff \mathrm{RHom}^{< 0}_{D^b(\mathrm{Coh}(\tilde{\mathcal{N}}))}(\mathcal{E}_0, \mathcal{F}) = 0.
\end{align}

Following \cite[Section 11.2]{B} and defining $\tilde{\mathcal{E}} = \mathrm{pr}_1^*\mathcal{E}\otimes \mathrm{pr}_2^*\mathcal{E}_0$, for $\mathrm{pr}_1 : \mathrm{St}' \to \tilde{\mathfrak{g}}$ and $\mathrm{pr}_2 : \mathrm{St}' \to \tilde{\mathcal{N}}$ the natural projections, we note that $A \otimes_{\mathcal{O}(\mathfrak{g}^\vee)} (A^0)^{\mathrm{op}} \cong \mathrm{End}(\tilde{\mathcal{E}})$, and we have similar equivalences
\begin{align}
    \mathcal{F} \in {}^{\mathrm{bimod}}D^{\leq 0} & \iff \mathrm{RHom}^{> 0}_{D^b(\mathrm{Coh}(\mathrm{St}'))}(\tilde{\mathcal{E}}, \mathcal{F}) = 0,\label{eqn:bimodleq} \\
    \mathcal{F} \in {}^{\mathrm{bimod}}D^{\geq 0} & \iff \mathrm{RHom}^{< 0}_{D^b(\mathrm{Coh}(\mathrm{St}'))}(\tilde{\mathcal{E}}, \mathcal{F}) = 0.\label{eqn:bimodgeq}
\end{align}
We let $\tilde{\mathcal{T}} \in D_{I_u}^b(X)$ be the element obtained from $\tilde{\mathcal{E}}$ under the equivalence (\ref{eqn:dbcoh}) of Theorem \ref{thm:equivs}.

\begin{proposition}[\cite{BM}]\label{prop:tiltinggen}
    The object $\mathcal{T} \in D_{\mathrm{IW}}^b(X)$ is a tilting perverse sheaf.
\end{proposition}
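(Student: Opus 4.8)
The plan is to unwind the statement into two separate claims about $\mathcal{T}$: that it is perverse, and that it admits both a standard and a costandard filtration. For the perversity, I would use the characterization of the NCS $t$-structure recalled just above the proposition together with the fact that $\mathcal{E}_0$ is a tilting generator. By definition of the tilting generator in \cite{BM}, $\mathcal{E}_0$ is a vector bundle on $\tilde{\mathcal{N}}$ with $\mathrm{RHom}^{\neq 0}(\mathcal{E}_0, \mathcal{E}_0) = 0$, so $\mathcal{E}_0$ lies in the heart of the tautological $t$-structure on $D^b(\mathrm{Coh}^{G^\vee}(\tilde{\mathcal{N}}))$, i.e. $\mathcal{T} \in {}^{\mathrm{mod}}D^{\heartsuit}$. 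The content is then that the NCS heart, at least on this particular object, agrees with the perverse heart. The cleanest route is to invoke the compatibility in Theorem \ref{thm:equivs}: the equivalence $D^b_{\mathrm{IW}}(X) \cong D^b(\mathrm{Coh}^{G^\vee}(\tilde{\mathcal{N}}))$ sends line bundles $\mathcal{O}(\lambda)$ (or appropriate twists) to the standard/costandard objects ${}^\psi\Delta_w$, ${}^\psi\nabla_w$ up to shift; since $\mathcal{E}_0$ is filtered by line bundles (it is built from a Koszul-type / exceptional collection resolution in \cite{BM}), $\mathcal{T}$ is correspondingly filtered by the ${}^\psi\Delta_w$ and, using a dual filtration of $\mathcal{E}_0^\vee$, by the ${}^\psi\nabla_w$. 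That simultaneously gives perversity (standard and costandard objects are perverse, and an iterated extension of perverse objects is perverse) and the tilting property.

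Concretely, the key steps in order are: (1) recall from \cite{BM} that $\mathcal{E}_0$ has a filtration with associated graded a sum of line bundles $\mathcal{L}_i$ on $\tilde{\mathcal{N}}$, and that $\mathrm{Ext}^{>0}$ between these line bundles, in the appropriate order, vanishes; (2) transport this filtration through the equivalence \eqref{eqn:iwcoh} of Theorem \ref{thm:equivs}, identifying the image of each $\mathcal{L}_i$ with a shift of some ${}^\psi\Delta_{w_i}$, so that $\mathcal{T}$ acquires a standard filtration — in particular $\mathcal{T}$ is perverse, being a successive extension of perverse ${}^\psi\Delta$'s (one must check the extensions live in degree $0$, which follows from the $\mathrm{Ext}$-vanishing in (1)); (3) apply Verdier duality $\mathbb{D}$, which exchanges ${}^\psi\Delta_w \leftrightarrow {}^\psi\nabla_w$, to the dual filtration of $\mathcal{E}_0^\vee$ (equivalently, use that the relevant bundle is self-dual up to twist, or that $A^0 \cong (A^0)^{\mathrm{op}}$ via an anti-involution as in \cite{BM}), obtaining a costandard filtration of $\mathcal{T}$; (4) conclude that $\mathcal{T}$ is tilting.

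The main obstacle is step (2): matching the image of $\mathcal{E}_0$ under the abstract equivalence \eqref{eqn:iwcoh} with an explicit object built from the ${}^\psi\Delta_w$. The equivalence of Theorem \ref{thm:equivs} is characterized by its compatibility with convolution and with the monoidal action of $D^b_I(X)$, so the natural strategy is to write $\mathcal{E}_0$ (or a sufficiently large summand of a tilting generator) as a convolution of standard objects — for instance as $\nabla_{w_0} * (\text{something})$ or via the Wakimoto-type filtration — and then match this on the coherent side with the known description of how line bundles on $\tilde{\mathcal{N}}$ arise as tensor products, using the dictionary of \cite{AB}, \cite{B} between Wakimoto sheaves and line bundles. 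Alternatively, one can avoid explicit identifications entirely by arguing purely $t$-exactness-theoretically: $\mathrm{Av}_\psi$ is perverse $t$-exact (stated above) and, via Proposition \ref{prop:commutes}, corresponds to $\mathrm{pr}_{2*}$, so it suffices to exhibit a $\tilde{\mathcal{T}}' \in D^b_{I_u}(X)$ lying in the intersection of the perverse heart with the bimodule heart whose $\mathrm{Av}_\psi$-image is $\mathcal{T}$; but this essentially reduces to the same matching problem and moreover requires the (harder) bimodule statement, so I expect the filtration argument above to be the more economical path. A secondary technical point is ensuring that the filtration of $\mathcal{E}_0$ can be chosen $G^\vee$-equivariantly and compatibly with the grading, but this is handled in \cite{BM} and I would simply cite it.
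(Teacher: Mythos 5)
Your plan is genuinely different from the paper's, and it leaves the hard part open. The paper's proof uses the \emph{construction} of $\mathcal{E}$ in \cite[Section 2.4]{BM} directly: $\mathcal{E}$ is obtained by iteratively applying reflection functors $\mathcal{R}_s$ to $\mathcal{O}_{\tilde{\mathfrak{g}}^\vee}$, and under the equivalences of Theorem \ref{thm:equivs} each $\mathcal{R}_s$ corresponds to right convolution with the elementary tilting $\mathcal{T}_s$, while $\mathcal{O}_{\tilde{\mathfrak{g}}^\vee}$ corresponds to $\Delta_e$ (hence $\mathcal{E}_0$ to ${}^\psi\Delta_e$ on the Iwahori--Whittaker side). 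Thus $\mathcal{T}$ is a direct summand of an iterated convolution of tilting perverse sheaves, and such a convolution is automatically perverse and tilting. No identification of line bundles with constructible objects is needed, and there is no duality step.

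Your main route instead posits (1) a filtration of $\mathcal{E}_0$ by line bundles with suitable $\mathrm{Ext}$-vanishing and (2) a dictionary sending line bundles to standard objects ${}^\psi\Delta_w$. Both are gaps. For (1): $\mathcal{E}_0$ is not presented in \cite{BM} via an exceptional collection or Koszul resolution but via reflection functors, so the asserted line-bundle filtration with ordered $\mathrm{Ext}$-vanishing is an extra claim requiring proof. For (2): under \eqref{eqn:iwcoh} line bundles $\mathcal{O}_{\tilde{\mathcal{N}}}(\lambda)$ correspond to Iwahori--Whittaker Wakimoto sheaves $\mathrm{Av}_\psi(J_\lambda)$, not to arbitrary ${}^\psi\Delta_w$; that these are clean (hence simultaneously standard and costandard on a single stratum) is itself a nontrivial fact, essentially part of \cite[Theorem 7]{AB}, so you would be importing a harder input to prove an easier statement. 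Your step (3) further rests on a self-duality of $\mathcal{E}_0$ (or an anti-involution of $A^0$) that you have not established. Your ``alternatively'' paragraph --- expressing $\mathcal{E}_0$ as a convolution --- is in fact the paper's route; if you observe that the relevant convolution is generated by the $\mathcal{T}_s$ via reflection functors, the matching problem you flag as the obstacle disappears entirely.
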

\begin{proof}
    In \cite[2.4]{BM}, the object $\mathcal{E}$ is constructed by repeatedly applying ``reflection functors" $\mathcal{R}_{s}$ corresponding to simple reflections $s \in S$ to the coherent sheaf $\mathcal{O}_{\tilde{\mathfrak{g}}^\vee}$. It follows from 2.3.1 of loc.\ cit.\ that under the equivalences in Theorem \ref{thm:equivs}, these reflection functors are mapped to convolution functors $- * \mathcal{T}_s$ with the usual tilting perverse sheaves $\mathcal{T}_s$ which for any $s \in S$ is defined as the unique nontrivial extension
    \begin{align}
        0 \to \Delta_s \to \mathcal{T}_s \to \Delta_e \to 0.
    \end{align}
Further, $\mathcal{O}_{\tilde{\mathfrak{g}}^\vee}$ is mapped to $\Delta_e$. Since this expresses $\mathcal{T}$ as some iterated convolution of tilting perverse sheaves, and a convolution of tilting sheaves is always perverse and tilting, we must also have that $\mathcal{T}$ is perverse and tilting in $\mathrm{Perv}_{\mathrm{IW}}(X)$.
\end{proof}

\begin{proposition}\label{prop:tiltingprojectivemod}
    Any tilting object $T \in \mathrm{Perv}_{\mathrm{IW}}(X)$ maps to a projective object in ${}^{\mathrm{mod}}D^{\heartsuit}$ under the equivalence (\ref{eqn:iwcoh}).
\end{proposition}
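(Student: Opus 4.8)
The plan is to identify the equivalence (\ref{eqn:iwcoh}) with a Ringel-duality functor and then run the classical argument ``Ringel duality sends tilting objects to projective objects'' in the present equivariant, infinite-rank setting. Since $\mathcal{T}$ corresponds to $\mathcal{E}_0$ under (\ref{eqn:iwcoh}) and $A^0=\mathrm{End}(\mathcal{E}_0)^{\mathrm{op}}=\mathrm{End}_{D^b_{\mathrm{IW}}(X)}(\mathcal{T})^{\mathrm{op}}$, the composite of (\ref{eqn:iwcoh}) with $\mathrm{RHom}_{\tilde{\mathcal{N}}}(\mathcal{E}_0,-)$ is the functor $R:=\mathrm{RHom}_{D^b_{\mathrm{IW}}(X)}(\mathcal{T},-)$, which carries $\mathcal{T}$ to the free module $A^0$; here $\mathcal{T}$ is tilting by Proposition \ref{prop:tiltinggen} and is a generator of $D^b_{\mathrm{IW}}(X)$ because $\mathcal{E}_0$ is. I also record from the proof of Proposition \ref{prop:tiltinggen} that $\mathcal{T}=\mathrm{Av}_{\psi}(\mathcal{T}_0)$ where $\mathcal{T}_0=\mathcal{T}_{s_{i_1}}*\cdots*\mathcal{T}_{s_{i_k}}\in\mathrm{Perv}_I(X)$ is a Bott--Samelson tilting, obtained by removing the initial $\mathrm{Av}_{\psi}$ from the iterated convolution of the $\mathcal{T}_s$'s that builds $\mathcal{T}$ (using that $\Delta_e$ is the unit of $D^b_I(X)$ and $\mathrm{Av}_{\psi}$ commutes with right convolution).

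First I would check that any tilting $T\in\mathrm{Perv}_{\mathrm{IW}}(X)$ lies in ${}^{\mathrm{mod}}D^{\heartsuit}$. By the characterization of the NCS $t$-structure via $\mathcal{E}_0$, read off through $\mathcal{T}$, this amounts to $\mathrm{RHom}_{D^b_{\mathrm{IW}}(X)}(\mathcal{T},T)$ being concentrated in degree $0$: negative $\mathrm{Ext}$'s vanish since $\mathcal{T},T$ are perverse, and positive ones vanish by d\'evissage from $\mathrm{Ext}^{>0}_{D^b_{\mathrm{IW}}(X)}({}^\psi\Delta_w,{}^\psi\nabla_v)=0$, using that $\mathcal{T}$ is $\Delta$-filtered and $T$ is $\nabla$-filtered. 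Thus $T$ corresponds under (\ref{eqn:iwcoh}) to the honest module $M_T:=\mathrm{Hom}_{D^b_{\mathrm{IW}}(X)}(\mathcal{T},T)\in A^0-\mathrm{mod}^{G^\vee}_{\mathrm{fg}}$, and it remains to prove $M_T$ is projective.

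For projectivity, the projective objects of $A^0-\mathrm{mod}^{G^\vee}_{\mathrm{fg}}$ are exactly the direct summands of the modules $A^0\otimes V$, $V\in\mathrm{Rep}(G^\vee)$: these generate, and $\mathrm{Hom}_{A^0-\mathrm{mod}^{G^\vee}}(A^0\otimes V,-)\cong\mathrm{Hom}_{G^\vee}(V,-)$ is exact by semisimplicity of $\mathrm{Rep}(G^\vee)$. Moreover $A^0$ has finite global dimension, since $\tilde{\mathcal{N}}$ is smooth and $\mathcal{E}_0$ a tilting generator, so every module admits a finite resolution by such projectives. Hence $M_T$ is projective provided $\mathrm{Hom}_{D^b_{\mathrm{IW}}(X)}(T,\mathcal{T}_V[\ell])=0$ for all $\ell>0$ and all $V$, where $\mathcal{T}_V\in D^b_{\mathrm{IW}}(X)$ corresponds to $A^0\otimes V$. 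By Theorem \ref{thm:equivs} the tensor action of $\mathrm{Rep}(G^\vee)$ on $\mathrm{Coh}^{G^\vee}(\tilde{\mathcal{N}})$ is intertwined with convolution by Gaitsgory's central sheaf $Z(V)$, so $\mathcal{T}_V\cong\mathcal{T}*Z(V)$; centrality of $Z(V)$ together with $\mathcal{T}=\mathrm{Av}_{\psi}(\mathcal{T}_0)$ gives $\mathcal{T}*Z(V)\cong{}^\psi\Delta_e*(Z(V)*\mathcal{T}_0)=\mathrm{Av}_{\psi}(Z(V))*\mathcal{T}_0$. By \cite[Theorem 7]{AB}, $\mathrm{Av}_{\psi}(Z(V))$ is tilting in $\mathrm{Perv}_{\mathrm{IW}}(X)$, and $\mathcal{T}_0$ is tilting in $\mathrm{Perv}_I(X)$; since a convolution of tilting sheaves is tilting (as used in the proof of Proposition \ref{prop:tiltinggen}), $\mathcal{T}_V$ is a tilting perverse sheaf, hence $\nabla$-filtered, so $\mathrm{Ext}^{>0}_{D^b_{\mathrm{IW}}(X)}(T,\mathcal{T}_V)=0$ because $T$ is $\Delta$-filtered — the required vanishing.

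I expect the projectivity step to be the main obstacle, and within it the crucial claim is that the projective generators of ${}^{\mathrm{mod}}D^{\heartsuit}$ — equivalently the objects $\mathcal{T}*Z(V)$ — are tilting perverse sheaves; this is exactly where \cite[Theorem 7]{AB} is used, and it also demands care in the identification (via \cite{AB}, \cite{B}) of the central $\mathrm{Rep}(G^\vee)$-action on $\mathrm{Coh}^{G^\vee}(\tilde{\mathcal{N}})$ with convolution by $Z(V)$, as well as in tracking left-versus-right convolutions and the normalization of the central functor. The remaining ingredients — the $\mathrm{Ext}$-orthogonality of standards and costandards, finiteness of global dimension, and the two d\'evissages — are routine.
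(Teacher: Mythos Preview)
Your argument is essentially correct but takes a genuinely different route from the paper. The paper's proof is shorter and more structural: every indecomposable tilting in $\mathrm{Perv}_{\mathrm{IW}}(X)$ is a summand of some Bott--Samelson convolution ${}^{\psi}\Delta_e * \mathcal{T}_{s_{i_1}} * \cdots * \mathcal{T}_{s_{i_k}}$, which on the coherent side is $\mathcal{R}_{s_{i_1}} \cdots \mathcal{R}_{s_{i_k}}(\mathcal{O})$; since $\mathcal{O}$ is a summand of $\mathcal{E}_0$ (hence projective in ${}^{\mathrm{mod}}D^{\heartsuit}$) and each reflection functor $\mathcal{R}_s$ is self-adjoint and $t$-exact for the NCS $t$-structure (hence preserves projectives), the result follows. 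Your approach instead identifies the projective generators $A^0 \otimes V$ of ${}^{\mathrm{mod}}D^{\heartsuit}$ with $\mathcal{T} * \mathcal{Z}_V$, shows these are tilting via \cite[Theorem~7]{AB}, and then deduces projectivity of $M_T$ from $\mathrm{Ext}^{>0}$-vanishing into projectives together with finite global dimension. Both work, but the paper's argument avoids invoking \cite[Theorem~7]{AB} --- which matters here, since one of the paper's stated payoffs is that Theorem~\ref{thm:convexacttilt} yields an independent proof of exactly that result, and Theorem~\ref{thm:convexacttilt} goes through Proposition~\ref{prop:tiltingprojectivemod}; your route would make that reproof circular.

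One recurring imprecision you should clean up: the characterization of the NCS $t$-structure recorded in the paper is via \emph{non-equivariant} $\mathrm{RHom}_{\mathrm{Coh}(\tilde{\mathcal{N}})}(\mathcal{E}_0,-)$, whereas $\mathrm{RHom}_{D^b_{\mathrm{IW}}(X)}(\mathcal{T},-)$ only computes its $G^\vee$-invariants (so in particular $\mathrm{End}_{D^b_{\mathrm{IW}}(X)}(\mathcal{T})^{\mathrm{op}} = (A^0)^{G^\vee}$, not $A^0$). Thus your step ``$T \in {}^{\mathrm{mod}}D^{\heartsuit}$'' really requires $\mathrm{RHom}(\mathcal{T} * \mathcal{Z}_V, T)$ to be concentrated in degree $0$ for \emph{all} $V$, not just the trivial one --- but this follows immediately once you have shown $\mathcal{T} * \mathcal{Z}_V$ is tilting, so the fix is just to reorder the argument and invoke Lemma~\ref{lem:centralrhom}.
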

\begin{proof}
    Any tilting object $T \in \mathrm{Perv}_{\mathrm{IW}}(X)$ is a direct sum of direct summands of a convolution of the form
    \begin{align}\label{eqn:tiltingconv}
        {}^{\psi}\Delta_{e} * \mathcal{T}_{s_{i_1}} * \dots * \mathcal{T}_{s_{i_k}}
    \end{align}
    for the sheaves $\mathcal{T}_{s_{i_k}}$ as in the proof of the preceding proposition. On the coherent side of (\ref{eqn:iwcoh}), the expression (\ref{eqn:tiltingconv}) can be written as
    \begin{align}
        \mathcal{R}_{s_{i_1}} \dots \mathcal{R}_{s_{i_k}}(\mathcal{O}_{\tilde{\mathfrak{g}}^\vee}).
    \end{align}
    Since it is shown in \cite{AB} that $\mathcal{O}_{\tilde{\mathfrak{g}}^\vee}$ is a direct summand of $\mathcal{E}$, it must be projective when considered as an element of ${}^{\mathrm{mod}}D^{\heartsuit}$. We now recall that for any $s \in S$, the functor $\mathcal{R}_{s}$ is $t$-exact for the NCS $t$-structure and self-adjoint, which means it must map projectives to projectives. This implies that the object in (\ref{eqn:tiltingconv}) is mapped to a projective element of ${}^{\mathrm{mod}}D^{\heartsuit}$ under the equivalence (\ref{eqn:iwcoh}), and therefore the same is true for $T$.
\end{proof}

\subsection{Gaitsgory's central sheaves} In \cite{GaitsgoryCentral}, Gaitsgory uses the geometric Satake isomorphism and the nearby cycles functor to construct a central functor from $\mathrm{Rep}(G^\vee)$ to $D_{I}^b(X)$. As a result, for any $V \in \mathrm{Rep}(G^\vee)$, there exists a perverse sheaf $\mathcal{Z}_V$ which is convolution-exact and central, meaning for all $\mathcal{F} \in D_I^b(X)$, there are natural isomorphisms
\begin{align}
    \mathcal{Z}_V * \mathcal{F} \cong \mathcal{F} * \mathcal{Z}_V
\end{align}
which together satisfy certain natural compatibilities (as explained in \cite[Section 2.1]{BezTensor}). In \cite{AB} and \cite{B}, it is shown how to interpret central sheaves on the coherent side of the equivalences in \ref{thm:equivs}.
\begin{proposition}[\cite{B}]
    If $\mathcal{F} \in D_{I_u}^b(X)$ maps to $\mathcal{A}_\mathcal{F} \in D^b(\mathrm{Coh}^{G^\vee}(\mathrm{St}'))$, then for any $V \in \mathrm{Rep}(G^\vee)$, $\mathcal{F} * \mathcal{Z}_V$ maps to $\mathcal{A}_{\mathcal{F}}\otimes V$ under the same equivalence.
\end{proposition}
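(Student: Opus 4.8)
The statement to prove is: if $\mathcal{F} \in D_{I_u}^b(X)$ corresponds to $\mathcal{A}_\mathcal{F} \in D^b(\mathrm{Coh}^{G^\vee}(\mathrm{St}'))$ under the equivalence (\ref{eqn:dbcoh}), then $\mathcal{F} * \mathcal{Z}_V$ corresponds to $\mathcal{A}_{\mathcal{F}} \otimes V$, where $V \in \mathrm{Rep}(G^\vee)$ is viewed as a $G^\vee$-equivariant sheaf on the base and $\otimes$ is taken over the structure sheaf (equivalently, on the bimodule side, $M_{\mathcal{F}} \mapsto M_{\mathcal{F}} \otimes V$ with the diagonal $G^\vee$-action).

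The plan is to reduce this to the known behavior of central sheaves under the \emph{Iwahori-Whittaker} equivalence (\ref{eqn:iwcoh}) together with the compatibility of the two equivalences expressed by Proposition \ref{prop:commutes}, and to use the centrality of $\mathcal{Z}_V$. First I would recall from \cite{AB} the identification on the Iwahori-Whittaker side: under (\ref{eqn:iwcoh}), convolution with $\mathcal{Z}_V$ corresponds to $- \otimes V$ on $D^b(\mathrm{Coh}^{G^\vee}(\tilde{\mathcal{N}}))$ — this is essentially the statement that Gaitsgory's central functor matches the ``tautological'' central functor $\mathrm{Rep}(G^\vee) \to \mathrm{DGCoh}^{G^\vee}(\tilde{\mathcal{N}} \times^{\mathrm{L}}_{\mathfrak{g}^\vee} \tilde{\mathcal{N}})$ sending $V$ to the structure sheaf of the diagonal twisted by $V$, restricted along $\mathrm{pr}_{2*}$. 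Since $\mathcal{Z}_V$ is central, $\mathcal{F} * \mathcal{Z}_V \cong \mathcal{Z}_V * \mathcal{F}$, and both sides of the claimed identity are compatible with the module structure of $D_{\mathrm{IW}}^b(X)$ over $D_{I_u}^b(X)$; applying $\mathrm{Av}_\psi = \mathrm{pr}_{2*}$ and using Proposition \ref{prop:commutes} recovers the statement after applying $\mathrm{Av}_\psi$, but one needs to upgrade this to an assertion before applying $\mathrm{Av}_\psi$.

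The cleanest route, which I would actually pursue, is to work on the bimodule/coherent side directly. Convolution with a central sheaf $\mathcal{Z}_V$ is, by construction in \cite{GaitsgoryCentral} and its coherent interpretation in \cite{AB}, the functor of tensoring the $A$-bimodule (equivalently, the sheaf on $\mathrm{St}' = \tilde{\mathfrak{g}}^\vee \times_{\mathfrak{g}^\vee} \tilde{\mathcal{N}}$) by the pullback of $V$ from $\mathrm{Spec}\,\mathcal{O}(\mathfrak{g}^\vee)$ — and crucially $V$, being pulled back from the base, is central in the monoidal sense, so it does not matter whether we tensor ``on the left'' (via the $\tilde{\mathfrak{g}}^\vee$ factor) or ``on the right'' (via the $\tilde{\mathcal{N}}$ factor): they agree because $\mathcal{O}(\mathfrak{g}^\vee)$ acts centrally on $A$. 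Thus the main verification is: (i) the monoidal equivalence of Theorem \ref{thm:equivs} intertwines $- * \mathcal{Z}_V$ with $- \otimes_{\mathcal{O}(\mathfrak{g}^\vee)} V$; (ii) this tensoring, read through $\mathrm{St}'$, is literally $\mathcal{A}_\mathcal{F} \mapsto \mathcal{A}_\mathcal{F} \otimes V$ with $V$ pulled back along $\mathrm{St}' \to \mathfrak{g}^\vee$. For (i) I would cite the identification of the central functor in \cite{AB}/\cite{B} (this is where the content of \cite{GaitsgoryCentral} enters — the nearby-cycles construction matches the geometric Satake/coherent description), and for (ii) I would note it is a formal unwinding of $\tilde{\mathcal{E}} = \mathrm{pr}_1^*\mathcal{E} \otimes \mathrm{pr}_2^* \mathcal{E}_0$ and the projection formula.

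The main obstacle is part (i): pinning down precisely in what form the coherent description of Gaitsgory's central sheaves appears in \cite{AB} and \cite{B}, and making sure the normalization (the twist by $V$, any shifts, the $G^\vee$-equivariant structure being the diagonal one) is consistent with our conventions for the equivalences (\ref{eqn:dbcoh}) and (\ref{eqn:iwcoh}). In particular one must be careful that centrality of $\mathcal{Z}_V$ — which a priori is a statement about convolution of sheaves — translates to the statement that $V$ is pulled back from $\mathfrak{g}^\vee$ and hence acts via the central subalgebra $\mathcal{O}(\mathfrak{g}^\vee) \subset A \otimes_{\mathcal{O}(\mathfrak{g}^\vee)}(A^0)^{\mathrm{op}}$, so that left and right tensoring coincide; this is the conceptual heart and I would isolate it as a short lemma, then the rest is bookkeeping. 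Once this is in place, the proposition follows immediately by applying the equivalence to $\mathcal{F}$ and using that $- * \mathcal{Z}_V$ is intertwined with $- \otimes V$.
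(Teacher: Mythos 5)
The paper does not prove this proposition: it is recalled verbatim from \cite{B}, as the citation in its statement indicates, and is used as a black box. Your proposal does not supply an independent argument either, and the gap is concrete: your ``main verification (i)'' --- that the equivalence (\ref{eqn:dbcoh}) intertwines $- * \mathcal{Z}_V$ with $- \otimes V$ --- \emph{is} the content of the proposition, so citing \cite{AB}/\cite{B} for it is circular. This compatibility is not a formal consequence of the other facts on the table; it is established in \cite{B} as part of the construction of the equivalence, where Gaitsgory's central functor and its coherent interpretation from \cite{AB} are used as input in building the functor in the first place. Your first strategy, passing to $D_{\mathrm{IW}}^b$ via $\mathrm{Av}_\psi$ and Proposition \ref{prop:commutes}, also cannot close the gap: $\mathrm{Av}_\psi$ is far from conservative on $D_{I_u}^b(X)$ (it kills every $\mathrm{IC}_w$ with $w \not\in {}^f W$), so agreement after averaging does not determine the objects upstairs --- a point you yourself flag but do not resolve.

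There is also a smaller but real imprecision in the conceptual gloss. You describe $V$ as ``pulled back from $\mathrm{Spec}\,\mathcal{O}(\mathfrak{g}^\vee)$'' and attribute centrality to $\mathcal{O}(\mathfrak{g}^\vee)$ being central in $A \otimes_{\mathcal{O}(\mathfrak{g}^\vee)} (A^0)^{\mathrm{op}}$. But in $\mathcal{A}_{\mathcal{F}} \otimes V$ the object $V$ is just a $G^\vee$-representation, not a sheaf on $\mathfrak{g}^\vee$; the tensor is with the trivial bundle $V \otimes \mathcal{O}$, $G^\vee$ acting diagonally. On the bimodule side $\mathcal{Z}_V$ corresponds to the diagonal bimodule twisted by $V$, and the coincidence of ``left'' and ``right'' tensoring by $V$ is then a triviality valid for bimodules over any ring --- it has nothing to do with $\mathcal{O}(\mathfrak{g}^\vee)$ being central. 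The substantive centrality lives entirely on the constructible side (Gaitsgory's theorem), and the proposition asserts precisely that it matches the trivial centrality on the coherent side under the equivalence; that match is what \cite{B} proves and what your sketch presupposes.
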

In other words, convolution with a central sheaf can be interpreted as taking the tensor product with a $G^\vee$-representation; we then see that this operation is $t$-exact not only with respect to the perverse $t$-structure but also with respect to the tautological $t$-structure on coherent sheaves and the NCS $t$-structure arising from bimodules over the noncommutative Springer resolution.

It is explained in \cite{AB} that central sheaves admit a \emph{Wakimoto filtration}, but $\mathcal{Z}_V$ is not tilting for $V \neq 0$. That being said, the following result was shown in \cite{AB}.

\begin{theorem}[Theorem 7 in \cite{AB}]
    For any $V \in \mathrm{Rep}(G^\vee)$,
    \begin{align}
        \mathrm{Av}_{\psi}(\mathcal{Z}_V) \in \mathrm{Perv}_{\mathrm{IW}}(X)
    \end{align}
    is tilting.
\end{theorem}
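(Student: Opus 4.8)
The plan is to reduce the statement to a property of the convolution functor $(-)\ast\mathcal{Z}_V$ and then to check that property on the coherent side.

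\emph{Reduction.} By definition $\mathrm{Av}_\psi(\mathcal{Z}_V)={}^\psi\Delta_e\ast\mathcal{Z}_V$. Since $X_e$ is a point we have $\Delta_e=\nabla_e=\mathrm{IC}_e$ in $D^b_I(X)$, hence ${}^\psi\Delta_e=\mathrm{Av}_\psi(\Delta_e)=\mathrm{Av}_\psi(\nabla_e)={}^\psi\nabla_e$; thus ${}^\psi\Delta_e$ is simultaneously a standard and a costandard object of $\mathrm{Perv}_{\mathrm{IW}}(X)$, so it is (trivially) a tilting object. It therefore suffices to show that $(-)\ast\mathcal{Z}_V$ carries tilting objects of $\mathrm{Perv}_{\mathrm{IW}}(X)$ to tilting objects. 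As a tilting object is one admitting both a ${}^\psi\Delta$-filtration and a ${}^\psi\nabla$-filtration, and these two classes are each closed under extensions, it is enough to check that $(-)\ast\mathcal{Z}_V$ is $t$-exact for the perverse $t$-structure and sends each ${}^\psi\Delta_w$ to a ${}^\psi\Delta$-filtered object and each ${}^\psi\nabla_w$ to a ${}^\psi\nabla$-filtered object. The $t$-exactness is already recorded above (convolution with central sheaves is perverse-$t$-exact, and $\mathrm{Av}_\psi$ is $t$-exact), so only the filtration statements need proof.

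\emph{The coherent side.} I would verify those on the coherent side. By the Proposition describing central sheaves, under (\ref{eqn:dbcoh}) the functor $(-)\ast\mathcal{Z}_V$ becomes $\otimes V$ on $D^b(\mathrm{Coh}^{G^\vee}(\mathrm{St}'))$; applying $\mathrm{pr}_{2\ast}$ and the projection formula (Proposition \ref{prop:commutes}) this descends, under (\ref{eqn:iwcoh}), to the functor $\otimes V$ on $D^b(\mathrm{Coh}^{G^\vee}(\tilde{\mathcal{N}}))$, i.e.\ tensoring with the locally free sheaf $\mathcal{O}_{\tilde{\mathcal{N}}}\otimes V$ carrying the diagonal $G^\vee$-equivariant structure. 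Now the perverse $t$-structure on $D^b_{\mathrm{IW}}(X)$ is the perverse coherent ($=$ middle perversity) $t$-structure on $D^b(\mathrm{Coh}^{G^\vee}(\tilde{\mathcal{N}}))$, and its standard and costandard objects are built from (twists of) the structure sheaves of the closures of the relevant $G^\vee$-invariant strata. Tensoring such an object by $V$ does not change supports and amounts to replacing the structure sheaf of each stratum by a $G^\vee$-equivariant vector bundle on that stratum, so the result is a successive extension of objects of the same kind; hence $\otimes V$ carries ${}^\psi\Delta$-filtered objects to ${}^\psi\Delta$-filtered objects and ${}^\psi\nabla$-filtered ones to ${}^\psi\nabla$-filtered ones. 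Together with the reduction above this shows $\mathrm{Av}_\psi(\mathcal{Z}_V)$ is tilting.

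\emph{Main obstacle and safeguards.} The delicate step is the last assertion of the previous paragraph — the compatibility of the autoequivalence $\otimes V$ with the standard and costandard objects of the perverse coherent $t$-structure on $\tilde{\mathcal{N}}$, equivalently the statement that convolution with a central sheaf preserves ${}^\psi\Delta$- and ${}^\psi\nabla$-filtrations in $\mathrm{Perv}_{\mathrm{IW}}(X)$; this is where the genuine content lies, and a careful description of the perverse coherent recollement (or of the standard filtrations in the Iwahori--Whittaker category) is required. Should one wish to avoid that analysis, an Ext-vanishing argument works: since $\otimes V$ is biadjoint to $\otimes V^\ast$, for any tilting $T$ and any $w$ one has $\mathrm{Ext}^1_{\mathrm{Perv}_{\mathrm{IW}}(X)}({}^\psi\Delta_w,\,T\ast\mathcal{Z}_V)\cong\mathrm{Ext}^1({}^\psi\Delta_w\ast\mathcal{Z}_{V^\ast},\,T)$, which vanishes once ${}^\psi\Delta_w\ast\mathcal{Z}_{V^\ast}$ is ${}^\psi\Delta$-filtered (as $T$ is tilting), and the Verdier-dual computation handles $\mathrm{Ext}^1(T\ast\mathcal{Z}_V,\,{}^\psi\nabla_w)$. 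Alternatively, following \cite{AB} more closely, one uses the Wakimoto filtration of $\mathcal{Z}_V$: it induces a filtration of ${}^\psi\Delta_e\ast\mathcal{Z}_V$ with subquotients ${}^\psi\Delta_e\ast J_\nu$, and one shows each of these is both ${}^\psi\Delta$- and ${}^\psi\nabla$-filtered using that convolution with the Wakimoto sheaf $J_\nu$ corresponds on $\tilde{\mathcal{N}}$ to tensoring by a line bundle pulled back from $G^\vee/B^\vee$, again support-preserving and hence $t$-exact and filtration-preserving. In each approach the crux is the same: controlling how convolution by central (equivalently, Wakimoto-filtered) sheaves interacts with standard filtrations in the Iwahori--Whittaker category.
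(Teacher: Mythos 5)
Your reduction (to showing that $(-)\ast\mathcal{Z}_V$ carries standard- and costandard-filtered objects of $\mathrm{Perv}_{\mathrm{IW}}(X)$ to the same) is sound, but the proposal never closes that gap, and all three routes you offer ultimately rest on the same unproved claim. This is also not the route the paper takes: after stating the theorem as a citation to \cite{AB}, the paper explicitly observes that it follows in one line from its own Theorem~\ref{thm:convexacttilt} (via Proposition~\ref{prop:convexactisbimod} and Theorem~\ref{thm:tstructure}), using nothing but the convolution-exactness of $\mathcal{Z}_V$. That route sidesteps the filtration analysis entirely and is the genuine simplification the paper advertises over the original minuscule/quasi-minuscule argument of \cite{AB}.

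Concretely, the gaps in your argument are the following. First, the identification ``the perverse $t$-structure on $D^b_{\mathrm{IW}}(X)$ is the perverse coherent $t$-structure on $D^b(\mathrm{Coh}^{G^\vee}(\tilde{\mathcal{N}}))$'' is not literally meaningful: $G^\vee$ does not act on $\tilde{\mathcal{N}}$ with finitely many orbits, so there is no na\"ive perverse coherent $t$-structure there. The correct statement (as the paper records) is that perversity corresponds to the perverse coherent $t$-structure on $G^\vee$-equivariant $A^0$-modules, stratified by the nilpotent orbits in $\mathcal{N}$, and the translation between $w\in{}^fW$ and pairs (orbit, equivariant bundle) passes through Lusztig's bijection and Theorem~\ref{thm:54} --- none of which is elementary. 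Second, and more seriously, ``$\otimes V$ is $t$-exact and support-preserving'' does not by itself imply that $\otimes V$ sends $\Delta$-filtered objects to $\Delta$-filtered objects: that requires knowing how $\otimes V$ interacts with the recollement data (in particular with $j_!$-type extensions and the Cohen--Macaulay normalization of the perverse coherent standards), which is precisely the content you flag as ``delicate.'' Your Ext-vanishing variant is circular as written, since it presupposes that ${}^\psi\Delta_w\ast\mathcal{Z}_{V^\ast}$ is $\Delta$-filtered, which is the same claim; and the Wakimoto variant requires knowing that $\mathrm{Av}_\psi(J_\nu)$ is \emph{tilting} (not merely standard or costandard) for an arbitrary coweight $\nu$, which for $\nu$ lying outside the dominant and anti-dominant cones is a nontrivial assertion you don't justify. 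So the proposal identifies the right pressure point but does not resolve it; the paper's Theorem~\ref{thm:convexacttilt} is exactly the tool that lets one avoid this analysis.
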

To prove this theorem, the authors show directly that the result holds for $V$ minuscule or quasiminuscule, and then show that if it holds for a pair of representations, it must also hold for their tensor product. One of the consequences of our Theorem \ref{thm:convexacttilt} is a more direct proof of this result without reference to (quasi)minuscule representations, which instead uses only the convolution-exactness property of $\mathcal{Z}_V$.

The following lemma will be a useful tool throughout the paper.
\begin{lemma}\label{lem:centralrhom}
    If $\mathcal{F}, \mathcal{G} \in D_{I_u}^b(X)$ are such that for any $V \in \mathrm{Rep}(G^\vee)$,
    \begin{align}
        \mathrm{R}^i\mathrm{Hom}_{D_{I_u}^b(X)}(\mathcal{F} * \mathcal{Z}_V, \mathcal{G}) = 0,
    \end{align}
    then
    \begin{align}
        \mathrm{R}^i\mathrm{Hom}_{A\otimes_{\mathcal{O}(\mathfrak{g}^\vee)} (A^0)^{\mathrm{op}}}(\mathcal{M}_{\mathcal{F}}, \mathcal{M}_{\mathcal{G}}) = 0,
    \end{align}
    where $\mathcal{M}_{\mathcal{F}}$ and $\mathcal{M}_{\mathcal{G}}$ are the images of $\mathcal{F}$ and $\mathcal{G}$ under the equivalence (\ref{eqn:dbcoh}).
\end{lemma}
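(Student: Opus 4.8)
The plan is to deduce the vanishing of $\mathrm{R}^i\mathrm{Hom}$ over the algebra $A\otimes_{\mathcal{O}(\mathfrak{g}^\vee)}(A^0)^{\mathrm{op}}$ from the given geometric vanishing by unwinding what the coherent side of the equivalence (\ref{eqn:dbcoh}) says about Hom-spaces. Recall from the discussion around (\ref{eqn:bimodleq})--(\ref{eqn:bimodgeq}) that $A\otimes_{\mathcal{O}(\mathfrak{g}^\vee)}(A^0)^{\mathrm{op}}\cong\mathrm{End}(\tilde{\mathcal{E}})$ for the tilting bundle $\tilde{\mathcal{E}}=\mathrm{pr}_1^*\mathcal{E}\otimes\mathrm{pr}_2^*\mathcal{E}_0$ on $\mathrm{St}'$, so that the equivalence $D^b(\mathrm{Coh}^{G^\vee}(\mathrm{St}'))\cong D^b(A\otimes_{\mathcal{O}(\mathfrak{g}^\vee)}(A^0)^{\mathrm{op}}-\mathrm{mod}_{\mathrm{fg}}^{G^\vee})$ is given by $\mathcal{A}\mapsto\mathrm{RHom}_{\mathrm{Coh}(\mathrm{St}')}(\tilde{\mathcal{E}},\mathcal{A})$ (with its natural module structure over $\mathrm{End}(\tilde{\mathcal{E}})$), and similarly for the non-equivariant version. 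In particular, for $\mathcal{M}_{\mathcal{F}}$, $\mathcal{M}_{\mathcal{G}}$ the images of $\mathcal{F}$, $\mathcal{G}$, we have $\mathrm{RHom}_{A\otimes_{\mathcal{O}(\mathfrak{g}^\vee)}(A^0)^{\mathrm{op}}}(\mathcal{M}_{\mathcal{F}},\mathcal{M}_{\mathcal{G}})\cong\mathrm{RHom}_{\mathrm{Coh}(\mathrm{St}')}(\mathcal{A}_{\mathcal{F}},\mathcal{A}_{\mathcal{G}})$, since $\mathrm{RHom}$ out of a progenerator computes module Hom.

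Next I would rewrite the right-hand side using the projection formula and the fact that convolution with central sheaves is interpreted as $\mathcal{A}_{\mathcal{F}}\mapsto\mathcal{A}_{\mathcal{F}}\otimes V$ on the coherent side. Since $\mathcal{E}$ is (a summand of a bundle) whose $G^\vee$-isotypic pieces are controlled, the key point is that $\mathcal{O}_{\mathrm{St}'}$ — or more precisely the structure sheaf twisted appropriately — decomposes after tensoring with the regular representation, so that $\mathrm{RHom}_{\mathrm{Coh}(\mathrm{St}')}(\mathcal{A}_{\mathcal{F}},\mathcal{A}_{\mathcal{G}})$ can be recovered from the collection $\{\mathrm{RHom}_{\mathrm{Coh}^{G^\vee}(\mathrm{St}')}(\mathcal{A}_{\mathcal{F}}\otimes V,\mathcal{A}_{\mathcal{G}})\}_{V\in\mathrm{Rep}(G^\vee)}$ by taking the sum over irreducibles $V$ with multiplicity $\dim V$: concretely, $\mathrm{RHom}_{\mathrm{Coh}(\mathrm{St}')}(\mathcal{A}_{\mathcal{F}},\mathcal{A}_{\mathcal{G}})\cong\bigoplus_{V}\mathrm{RHom}_{\mathrm{Coh}^{G^\vee}(\mathrm{St}')}(\mathcal{A}_{\mathcal{F}},\mathcal{A}_{\mathcal{G}}\otimes V)\otimes V^*$, which uses the decomposition of the forgetful functor $\mathrm{Coh}^{G^\vee}\to\mathrm{Coh}$ as tensoring with $\mathcal{O}(G^\vee)=\bigoplus_V V\otimes V^*$. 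Finally, translating each equivariant coherent $\mathrm{RHom}$ back through the equivalences to the equivariant category $D_{I_u}^b(X)$, and using that the equivalences are compatible with the module structure and that $\mathcal{A}_{\mathcal{G}}\otimes V$ corresponds to $\mathcal{G}*\mathcal{Z}_V$, one gets $\mathrm{R}^i\mathrm{Hom}_{\mathrm{Coh}^{G^\vee}(\mathrm{St}')}(\mathcal{A}_{\mathcal{F}},\mathcal{A}_{\mathcal{G}}\otimes V)\cong\mathrm{R}^i\mathrm{Hom}_{D_{I_u}^b(X)}(\mathcal{F}*\mathcal{Z}_V,\mathcal{G})$, wait — more carefully, $\mathrm{RHom}(\mathcal{A}_\mathcal{F},\mathcal{A}_\mathcal{G}\otimes V)\cong\mathrm{RHom}(\mathcal{A}_\mathcal{F}\otimes V^*,\mathcal{A}_\mathcal{G})\cong\mathrm{R}^i\mathrm{Hom}_{D^b_{I_u}(X)}(\mathcal{F}*\mathcal{Z}_{V^*},\mathcal{G})$, which is zero for all $V$ by hypothesis. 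Summing, every graded piece of $\mathrm{RHom}_{A\otimes_{\mathcal{O}(\mathfrak{g}^\vee)}(A^0)^{\mathrm{op}}}(\mathcal{M}_{\mathcal{F}},\mathcal{M}_{\mathcal{G}})$ in degree $i$ vanishes, giving the claim.

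The main obstacle I anticipate is making the passage between the equivariant and non-equivariant $\mathrm{RHom}$'s fully rigorous: one must be careful that $\mathrm{RHom}$ over the noncommutative algebra genuinely agrees with $\mathrm{RHom}$ over its $G^\vee$-invariant part after accounting for the $G^\vee$-action, and that the decomposition $\mathrm{Hom}_{\mathrm{non-eq}}=\bigoplus_V \mathrm{Hom}_{\mathrm{eq}}(-\otimes V^*,-)$ holds at the derived level with the finiteness needed to exchange the sum and cohomology (here finite generation ensures each $\mathrm{RHom}$ is bounded with finite-dimensional cohomology in each $V$-isotypic component, and only finitely many $V$ contribute in each cohomological degree). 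An alternative, perhaps cleaner, route avoids the regular-representation bookkeeping: work directly in $D_{I_u}^b(X)$, use that $\mathcal{M}_\mathcal{F}=\mathrm{RHom}(\tilde{\mathcal{E}},\mathcal{A}_\mathcal{F})$ with $\tilde{\mathcal{E}}$ built from $\mathcal{E}$ and $\mathcal{E}_0$, and observe that $\mathcal{E}$ (hence $\tilde{\mathcal{E}}$) is a summand of an object of the form $\mathcal{O}\otimes(\text{reg. rep.})$-type construction, i.e.\ obtained from the unit by convolving with central sheaves and with the $\mathcal{T}_s$; tracking this through, $\mathrm{RHom}_{A\otimes_{\mathcal{O}(\mathfrak{g}^\vee)}(A^0)^{\mathrm{op}}}(\mathcal{M}_\mathcal{F},\mathcal{M}_\mathcal{G})$ becomes a summand of $\mathrm{RHom}_{D^b_{I_u}(X)}(\mathcal{F}*\mathcal{Z}_V*(\cdots),\mathcal{G}*(\cdots))$ for suitable $V$ and tilting convolutands, and the hypothesis — applied after noting convolution with the $\mathcal{T}_s$ on the target can be moved to the source via adjunction — forces vanishing. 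I would present whichever of these two is shortest given the conventions already fixed in \cite{B}, likely citing loc.\ cit.\ for the precise statement that $\mathcal{E}$ arises this way.
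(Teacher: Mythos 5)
Your proposal is correct and rests on the same mechanism as the paper's proof: the equivariant $\mathrm{RHom}$ with a tensor twist by $V$ is identified (via the equivalence~(\ref{eqn:dbcoh}) and the interpretation of central sheaves as $-\otimes V$) with $\mathrm{R}^i\mathrm{Hom}_{D_{I_u}^b(X)}(\mathcal{F}*\mathcal{Z}_V,\mathcal{G})$, and the non-equivariant $\mathrm{RHom}$ is recovered from these equivariant pieces by $G^\vee$-isotypic decomposition. The paper short-circuits the isotypic bookkeeping by plugging in $V = V_i := \mathrm{R}^i\mathrm{Hom}(\mathcal{M}_\mathcal{F},\mathcal{M}_\mathcal{G})$ itself and observing that the hypothesis forces $(V_i\otimes V_i^*)^{G^\vee}=0$, hence $V_i=0$; your version makes the same point by summing over irreducibles, which is slightly more explicit but not a different idea.
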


\begin{proof}
    Suppose $\mathcal{F}, \mathcal{G} \in D_{I_u}^b(X)$ satisfy the hypothesis in the lemma for all $V \in \mathrm{Rep}(G^\vee)$. For any $i \in \mathbb{Z}$, let $V_i$ be the space $\mathrm{R}^i\mathrm{Hom}(\mathcal{M}_{\mathcal{F}}, \mathcal{M}_{\mathcal{G}})$ considered as a $G^\vee$-representation. The equivalence (\ref{eqn:dbcoh}) shows that we then have
    \begin{align}
    (V_i \otimes V_i^*)^{G^\vee} & \cong (\mathrm{R}^i\mathrm{Hom}_{A\otimes_{\mathcal{O}(\mathfrak{g}^\vee)} (A^0)^{\mathrm{op}}}(\mathcal{M}_{\mathcal{F}}, \mathcal{M}_{\mathcal{G}}) \otimes V_i^*)^{G^\vee}\\
    & \cong \mathrm{R}^i\mathrm{Hom}_{A\otimes_{\mathcal{O}(\mathfrak{g}^\vee)} (A^0)^{\mathrm{op}}}(\mathcal{M}_{\mathcal{F}} \otimes V_i, \mathcal{M}_{\mathcal{G}})^{G^\vee}\\
    & \cong \mathrm{R}^i\mathrm{Hom}_{D_{I_u}^b(X)}(\mathcal{F} * \mathcal{Z}_{V_i}, \mathcal{G}) = 0.
    \end{align}
Since $(V_i \otimes V_i^*)^{G^\vee}$ is only trivial when $V_i = 0$, this means \[\mathrm{R}^i\mathrm{Hom}_{A\otimes_{\mathcal{O}(\mathfrak{g}^\vee)} (A^0)^{\mathrm{op}}}(\mathcal{M}_{\mathcal{F}}, \mathcal{M}_{\mathcal{G}}) = V_i = 0.\]
\end{proof}
 
\section{The Radon transform and the thick flag variety}

\subsection{The thick flag variety\label{sec:thick}}

In \cite[5.2]{Y}, the author explains in detail how one can make sense of the Radon transform $R_{X \to Y}$ as a correspondence between the affine flag variety $X$ and the thick flag variety $Y$.

To do so, one can truncate these spaces as follows. Choosing any $w \in W$, consider the closed projective subscheme $X_{\leq w} \subset X$ and the open subscheme $Y_{\leq w}$ of $Y$. Then by \cite{Kash}, there is a principal congruence subgroup $K_w \subset G(\mathcal{O})$ which acts trivially on $X_{\leq w}$ and freely on $Y_{\leq w}$. Then consider the quotient $K_w \backslash Y_{\leq w}$, which is then a scheme of finite type which we call $Z_{\leq w}$. Further, $K_w$ is a normal subgroup of $I$ and so $I/K_w$ acts on both $X_{\leq w}$ and $Y_{\leq w}$, and the $I/K_w$-orbits on $Z_{\leq w}$ are indexed by $y \in W$ with $y \leq w$.

We now expand on the material in \cite{Y} to explicitly define what we mean by constructible and perverse sheaves on the infinite-type scheme $Y$. 

\subsection{Sheaves and $t$-structures on the thick flag variety}\label{sec:laxlimit}

Suppose $y, w \in W$ with $y \leq w$. Then let $K_y$ and $K_w$ be principal congruence subgroups chosen as in Section \ref{sec:thick} such that $Z_{\leq y} = Y_{\leq y}/K_y$ and $Z_{\leq w} = Y_{\leq w}/K_w$. Then it is clear that $K_w \subset K_y$ is a normal subgroup, and $K_y/K_w$ is smooth.

We then let $Z_{\leq y}^w = Y_{\leq y}/K_w$, which admits a smooth map to $Z_{\leq y} = Y_{\leq y}/K_y$ by taking the further quotient of the free action of the group $K_y/K_w$. Let $\pi_{y}^w : Z_{\leq y}^w \to Z_{\leq y}$ be this map, and let $j_{y}^w : Z_{\leq y}^w \to Z_{\leq w}$ be the natural open embedding. Let $d_{y}^w$ be the relative dimension of $\pi_{y}^w$. By \cite[Proposition 3.6.1]{Achar}, since $\pi_{y}^w$ is smooth, $\pi_{y}^{w*}[d_y^w]$ is $t$-exact with respect to the perverse $t$-structure. 

We now want to define $D^b_I(Y)$ (resp. $\mathrm{Perv}_I(Y)$) to be the dg (resp. abelian) category of tuples $(\mathcal{F}_w)_{w \in W}$ where each $\mathcal{F}_w$ lies in $D^b_I(Z_{\leq w})$ (resp. $\mathrm{Perv}_{I}(Z_{\leq w})$) satisfying some compatibility. One can see the necessity of this compatibility by the intuitive notion that if $(\mathcal{F}_w)_{w \in W}$ is to represent a sheaf on the thick flag variety, one must ensure that $\pi_{y}^{w*}\mathcal{F}_y[d_y^w] = j_{y}^{w*}\mathcal{F}_w$ for any $y \leq w$. To do this, we define $D_I^b(Y)$ as the \emph{lax limit} (c.f. \cite[Section 4.1]{AG} for a discussion of why this is the choice of construction which corresponds to gluing of dg categories) of the categories $D_I^b(Z_{\leq w})$ as follows.

\begin{definition}\label{def:laxlim}
    For any $y, z \in W$ with $y \leq z$, let $\Phi_{y,z}$ be the functor
    \begin{align*}
        \Phi_{y,z} = j_{y!}^{z}\pi_{y}^{z*}[d_y^z] : D_{I}^b(Z_{\leq y}) \to D_I^b(Z_{\leq z}).
    \end{align*}
    Whenever $w \in W$ with $y \leq w \leq z$, we have a natural map
    \begin{align*}
        \Phi_{w,z} \circ \Phi_{y,w} = j_{w!}^{z}\pi_{w}^{z*}j_{y!}^{w}\pi_{y}^{w*}[d_y^z] \to j_{y!}^{z}\pi_{y}^{z*}[d_y^z] = \Phi_{y,z}
    \end{align*}
    which can be seen by proper base change. We then define $D^b_I(Y)$ as the lax limit of the dg categories $D_I^b(Z_{\leq w})$ along the functors $\Phi_{y,z}$, i.e.
    \begin{align}
        D^b_I(Y) = \text{lax-lim}_{w \in W} D^b_I(Z_{\leq w}).
    \end{align}
    By 4.1.2 of \cite{AG}, objects of $D^b_I(Y)$ can then be described as the collection of tuples $(\mathcal{F}_w)_{w \in W}$ with compatible morphisms $j_{y!}^z\pi_{y}^{z*}\mathcal{F}_y[d_y^z] \to \mathcal{F}_z$ for any $y \leq z$ (in loc.\ cit., the word ``compatible'' here is explained more rigorously).
\end{definition}

We now prove the following more general result about lax limits of dg categories with $t$-structures along right $t$-exact functors, though in this paper we will only use it to discuss the specific situation of the example in Definition \ref{def:laxlim} (along with the Iwahori-Whittaker analogue which we will consider in the next subsection). To do so, we follow the setup of \cite{AG} by letting $\mathrm{DGCat}_{\mathrm{cont}}$ be the $(\infty, 1)$-category of presentable dg categories and continuous functors. Let $I$ be an index $\infty$-category and let $i \mapsto \mathbf{C}_i$, $(\alpha : i \to j) \mapsto (\Phi_\alpha : \mathbf{C}_i \to \mathbf{C}_j)$ be a functor $I \to \mathrm{DGCat}_{\mathrm{cont}}$.
\begin{proposition}\label{prop:tstruct}
    Suppose that the dg categories $\mathbf{C}_i$ are equipped with t-structures such that the functors $\Phi_{\alpha}$ are each right $t$-exact. Then the dg category
    \begin{align}
        \mathbf{C} = \emph{lax-lim}_{i \in I} \mathbf{C}_i
    \end{align}
    carries a well-defined $t$-structure given by the pullback of the $t$-structures on each $\mathbf{C}_i$ under the maps $\mathbf{C} \to \mathbf{C}_i$.
\end{proposition}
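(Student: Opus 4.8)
The plan is to take the candidate $t$-structure to be the one ``pulled back'' along the evaluation functors $\mathrm{ev}_i : \mathbf{C} \to \mathbf{C}_i$, namely
\[
\mathbf{C}^{\leq 0} := \{X \in \mathbf{C} : \mathrm{ev}_i(X) \in \mathbf{C}_i^{\leq 0} \text{ for all } i\}, \qquad \mathbf{C}^{\geq 0} := \{X \in \mathbf{C} : \mathrm{ev}_i(X) \in \mathbf{C}_i^{\geq 0} \text{ for all } i\},
\]
and to verify the $t$-structure axioms by hand. Since $\mathbf{C}$ is the lax limit of a diagram in $\mathrm{DGCat}_{\mathrm{cont}}$, it is stable and the $\mathrm{ev}_i$ are exact, so closure of $\mathbf{C}^{\leq 0}$ under $[1]$ and of $\mathbf{C}^{\geq 0}$ under $[-1]$ is immediate; the two substantive points are the orthogonality $\mathrm{Map}_{\mathbf{C}}(X,Y)\simeq 0$ for $X\in\mathbf{C}^{\leq 0}$, $Y\in\mathbf{C}^{\geq 1}$, and the existence of a functorial truncation triangle. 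I would reduce both to the description of $\mathbf{C}$ as the $\infty$-category of sections of the cocartesian fibration over $I$ classified by $i\mapsto\mathbf{C}_i$, $\alpha\mapsto\Phi_\alpha$, together with two standard facts about this model (c.f.\ \cite{AG}): the $\mathrm{ev}_i$ are jointly conservative, and for $X,Y\in\mathbf{C}$ the mapping space is the end
\[
\mathrm{Map}_{\mathbf{C}}(X,Y)\simeq \lim_{(\alpha : i\to j)\in\mathrm{Tw}(I)} \mathrm{Map}_{\mathbf{C}_j}\bigl(\Phi_\alpha\,\mathrm{ev}_i(X),\ \mathrm{ev}_j(Y)\bigr)
\]
over the twisted arrow category of $I$ (the $\alpha=\mathrm{id}_i$ term being $\mathrm{Map}_{\mathbf{C}_i}(\mathrm{ev}_i X,\mathrm{ev}_i Y)$).

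Granting this, orthogonality is essentially automatic: for $X\in\mathbf{C}^{\leq 0}$ and $Y\in\mathbf{C}^{\geq 1}$ and every arrow $\alpha : i\to j$, right $t$-exactness of $\Phi_\alpha$ puts $\Phi_\alpha\,\mathrm{ev}_i(X)$ in $\mathbf{C}_j^{\leq 0}$ while $\mathrm{ev}_j(Y)\in\mathbf{C}_j^{\geq 1}$, so every term of the end is contractible and hence so is the end.

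For the truncation triangle I would build $\tau^{\leq 0}X$ by truncating componentwise, $\mathrm{ev}_i(\tau^{\leq 0}X):=\tau^{\leq 0}_i\mathrm{ev}_i(X)$: for each $\alpha : i\to j$ the composite $\Phi_\alpha\tau^{\leq 0}_i\mathrm{ev}_i(X)\to\Phi_\alpha\,\mathrm{ev}_i(X)\to\mathrm{ev}_j(X)$ has source in $\mathbf{C}_j^{\leq 0}$ (again by right $t$-exactness), hence factors essentially uniquely through $\tau^{\leq 0}_j\mathrm{ev}_j(X)\to\mathrm{ev}_j(X)$, and these factorizations give the structure maps of a section $\tau^{\leq 0}X\in\mathbf{C}^{\leq 0}$ with a canonical map $\tau^{\leq 0}X\to X$. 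Using the end formula and the universal property of $\tau^{\leq 0}_j$ in $\mathbf{C}_j$, for any $Z\in\mathbf{C}^{\leq 0}$ the map $\mathrm{Map}_{\mathbf{C}}(Z,\tau^{\leq 0}X)\to\mathrm{Map}_{\mathbf{C}}(Z,X)$ is a term-by-term equivalence, so $\mathbf{C}^{\leq 0}\hookrightarrow\mathbf{C}$ is coreflective with coreflector $\tau^{\leq 0}$. Finally I set $\tau^{\geq 1}X:=\mathrm{cofib}(\tau^{\leq 0}X\to X)$; exactness of the $\mathrm{ev}_i$ gives $\mathrm{ev}_i(\tau^{\geq 1}X)=\tau^{\geq 1}_i\mathrm{ev}_i(X)\in\mathbf{C}_i^{\geq 1}$ for all $i$, so $\tau^{\geq 1}X\in\mathbf{C}^{\geq 1}$, and $\tau^{\leq 0}X\to X\to\tau^{\geq 1}X$ is the desired triangle.

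The step I expect to be the real obstacle is the one glossed over above: upgrading the componentwise truncations from a compatible family of objects to an honest section of the fibration, and then to a functor $\tau^{\leq 0}:\mathbf{C}\to\mathbf{C}^{\leq 0}$, with all the higher coherences. The cleanest way to organize this, rather than bookkeeping the homotopies directly, is to observe that the inclusions $\mathbf{C}_i^{\leq 0}\hookrightarrow\mathbf{C}_i$ assemble into a map of cocartesian fibrations over $I$ — this is precisely the content of right $t$-exactness of the $\Phi_\alpha$, that they carry connective parts to connective parts — each of whose fiber functors is coreflective, and then to invoke the general fact that such a map induces a coreflective functor on sections with coreflector computed fiberwise; alternatively one appeals to the $\infty$-categorical adjoint functor theorem, for which it suffices to produce for each $X$ the corepresenting object $\tau^{\leq 0}X$ and the term-by-term equivalence above. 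Everything else in the argument is formal once these facts about sections of cocartesian fibrations are in hand.
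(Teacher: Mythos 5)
Your approach is essentially the one the paper takes: define the $t$-structure by pullback along the evaluations and construct $\tau^{\leq 0}$ componentwise, using right $t$-exactness of the $\Phi_\alpha$ and the universal property of truncation to build the structure maps $\Phi_\alpha\tau^{\leq 0}_i A_i \to \tau^{\leq 0}_j A_j$ (the paper phrases this through the isomorphism $\Phi_\alpha\tau^i_{\leq 0}\cong\tau^j_{\leq 0}\Phi_\alpha\tau^i_{\leq 0}$, which encodes the same factorization). You are somewhat more careful than the paper on two points worth noting: you spell out the end formula for mapping spaces in the lax limit, which makes clear that orthogonality also uses right $t$-exactness (the paper tacitly treats this axiom as trivial), and you explicitly flag the coherence issue of promoting a compatible family of truncations to an honest section of the fibration, which the paper dispatches with an unelaborated appeal to ``a general property of lax limits'' --- your cocartesian-fibration or adjoint-functor-theorem framing is a clean way to close that gap in the same argument.
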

\begin{proof}
    The only axiom of a $t$-structure which is nontrivial to check and which will rely on right $t$-exactness of the functors $\Phi_{\alpha}$ is the existence of truncation functors and their corresponding distinguished triangles. For any $A \in \mathbf{C}$, we can write it as a compatible system of $A_i \in \mathbf{C}_i$ with compatible morphisms $\Phi_{\alpha}A_i \to A_j$ for every $\alpha : i \to j$. 

    We let $(\tau_{\leq 0}^i, \tau_{\geq 0}^i)$ be the truncation functors corresponding to the $t$-structure on each $\mathbf{C}_i$. We now show that the collections $\{\tau_{\leq 0}^iA_i\}_{i \in I}$ and $\{\tau_{\geq 0}^iA_i\}_{i \in I}$ are each equipped with compatible morphisms satisfying the conditions necessary to belong to the lax limit. Indeed, the right $t$-exactness of $\Phi_\alpha$ implies that there are natural isomorphisms $\Phi_{\alpha}\tau_{\leq 0}^i \cong \tau_{\leq 0}^j \Phi_{\alpha}\tau_{\leq 0}^i$ and $\tau_{\geq 0}^j\Phi_{\alpha} \cong \tau_{\geq 0}^j\Phi_{\alpha} \tau_{\geq 0}^i$. We can use these isomorphisms to build maps
    \begin{align}
        \Phi_\alpha \tau_{\leq 0}^i A_i & \cong \tau_{\leq 0}^j \Phi_{\alpha}\tau_{\leq 0}^i A_i \to \tau_{\leq 0}^j \Phi_{\alpha}A_i \to  \tau_{\leq 0}^jA_j,\\
        \Phi_\alpha \tau_{\geq 0}^i A_i & \to \tau_{\geq 0}^j\Phi_\alpha \tau_{\geq 0}^i A_i \cong \tau_{\geq 0}^j\Phi_\alpha A_i \to \tau_{\geq 0}^j A_j,
    \end{align}
    and one can check that the compatibility of the maps $\Phi_\alpha A_i \to A_j$ implies the compatibility of the above maps, meaning that $\{\tau_{\leq 0}A_i\}_{i \in I}$ and $\{\tau_{\geq 0}A_i\}_{i \in I}$ are valid elements of $\mathbf{C}$ by its definition as a lax limit. It is a general property of lax limits of pretriangulated dg categories that the triangles in each $\mathbf{C}_i$ imply that $\tau_{\leq 0}A \to A \to \tau_{\geq 1}A$ is a distinguished triangle. 
\end{proof}

\begin{corollary}
    There is a well-defined perverse $t$-structure on $D_I^b(Y)$, whose heart we denote $\mathrm{Perv}_I(Y)$. This heart consists of tuples $(A_w)_{w \in W} \in D_I^b(Y)$ such that $A_w \in \mathrm{Perv}_I(Z_{\leq w})$ for all $w \in W$. 
\end{corollary}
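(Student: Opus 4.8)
The plan is to deduce this directly from Proposition \ref{prop:tstruct} applied to the lax-limit presentation of $D^b_I(Y)$ given in Definition \ref{def:laxlim}. We take the index category to be $W$ ordered by the Bruhat order, with $w \mapsto D^b_I(Z_{\leq w})$ on objects and $(y \leq z) \mapsto \Phi_{y,z}$ on morphisms, each $D^b_I(Z_{\leq w})$ carrying its perverse $t$-structure. By Proposition \ref{prop:tstruct}, the only thing that needs to be verified is that every transition functor $\Phi_{y,z}$ is right $t$-exact; once this is known, the proposition produces a $t$-structure on $D^b_I(Y)$ obtained by pulling back the perverse $t$-structures along the projections $D^b_I(Y) \to D^b_I(Z_{\leq w})$. (Strictly speaking Proposition \ref{prop:tstruct} is stated for presentable dg categories and continuous functors; here one may either apply it after passing to the cocomplete categories $\mathrm{Ind}(D^b_I(Z_{\leq w}))$ and then restrict the resulting $t$-structure to the bounded subcategory $D^b_I(Y)$, or simply note that the proof of the proposition uses only the existence of truncation triangles, which holds in each bounded term $D^b_I(Z_{\leq w})$.)

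To check right $t$-exactness, factor $\Phi_{y,z} = j_{y!}^{z} \circ \bigl(\pi_{y}^{z*}[d_y^z]\bigr)$. As already recalled in Section \ref{sec:laxlimit}, since $\pi_y^z$ is smooth of relative dimension $d_y^z$, the shifted pullback $\pi_{y}^{z*}[d_y^z]$ is $t$-exact by \cite[Proposition 3.6.1]{Achar}. The functor $j_{y!}^{z}$ is extension by zero along the open immersion $j_y^z : Z_{\leq y}^{w} \hookrightarrow Z_{\leq z}$; being left adjoint to the $t$-exact restriction functor $j_y^{z*} = j_y^{z!}$, it is right $t$-exact. Hence the composite $\Phi_{y,z}$ is right $t$-exact, and Proposition \ref{prop:tstruct} applies to give the desired $t$-structure on $D^b_I(Y)$.

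It remains to identify the heart. By construction the $t$-structure is pulled back termwise: a tuple $(A_w)_{w \in W}$ lies in $D^b_I(Y)^{\leq 0}$ (resp. $D^b_I(Y)^{\geq 0}$) if and only if $A_w$ lies in the perverse aisle ${}^pD^{\leq 0}$ (resp.\ coaisle ${}^pD^{\geq 0}$) of $D^b_I(Z_{\leq w})$ for every $w$. Intersecting these two conditions, $(A_w)_{w \in W}$ lies in the heart precisely when $A_w \in \mathrm{Perv}_I(Z_{\leq w})$ for all $w \in W$, which we take as the definition of $\mathrm{Perv}_I(Y)$. The only substantive point in the argument is the right $t$-exactness of the transition functors, which as above reduces to two standard facts (shifted smooth pullback is $t$-exact; $!$-extension along an open immersion is right $t$-exact); the presentability hypothesis in Proposition \ref{prop:tstruct} is a mild technicality handled as indicated in the first paragraph.
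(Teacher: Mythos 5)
Your proof is correct and matches the paper's intended argument: the corollary follows from Proposition \ref{prop:tstruct} once one verifies that each $\Phi_{y,z} = j_{y!}^{z}\pi_{y}^{z*}[d_y^z]$ is right $t$-exact, which you do by combining the $t$-exactness of shifted smooth pullback (already noted in Section \ref{sec:laxlimit}) with the right $t$-exactness of $!$-extension along an open immersion. The caveat about presentability is a fair observation, and it applies equally to the paper's own use of the proposition.
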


For any $y, w\in W$, let $i_{\not\leq y}^w$ be the closed embedding which is complementary to the open embedding $j_y^w : Z_{\leq y}^w \to Z_{\leq w}$. 

\begin{definition}\label{def:eventually}
    We say that $\mathcal{F} = (\mathcal{F}_w)_{w\in W} \in D^b(Y)$ is \emph{eventually constant} (resp. \emph{eventually zero}) if there exists some $y \in w_0{}^fW$ for which $i_{\not\leq y}^{w*}\mathcal{F}_w$ is constant (resp. zero) on each $G(\mathcal{O})$-orbit for all $w \in W$.

    Note that if $\mathcal{F}$ is eventually zero with $y$ as above, then all of the data $(\mathcal{F}_w)_{w \in W}$ is completely determined by $\mathcal{F}_y \in Z_{\leq y}$, and so we can and will identify $\mathcal{F}$ with an element of $D^b_I(Z_{\leq y})$.
\end{definition}

\subsection{Iwahori-Whittaker sheaves on the thick flag variety}\label{sec:iwahorithick}

We now use a similar construction to obtain a well-defined notion of the Iwahori-Whittaker category $D^b_{\mathrm{IW}}(Y)$ on the thick flag variety, and an averaging functor $\mathrm{Av}_\psi : D^b_I(Y) \to D_{\mathrm{IW}}^b(Y)$. 

\begin{lemma}\label{lem:y'}
    If $y \in w_0{}^fW$, then $Y_{\leq y}$ is a union of $I'$-orbits. This means $Z_{\leq y}$ admits a well-defined action of $I'$ which factors through an action of $I'/K_y$.
\end{lemma}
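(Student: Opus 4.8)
The plan is to upgrade the tautological $I$-stability of $Y_{\leq y}$ to stability under the whole maximal parahoric $G(\mathcal{O}^+)$; since $I' \subseteq G(\mathcal{O}^+)$ this immediately yields $I'$-stability of $Y_{\leq y}$, which is exactly the assertion that $Y_{\leq y}$ is a union of $I'$-orbits. The group $G(\mathcal{O}^+)$ is generated by $I$ together with lifts of the simple reflections of $W_f$ — equivalently, by $I$ together with the minimal parahorics $P_s = IsI$ for $s$ a simple reflection of $W_f$ — since modulo the first congruence subgroup $K_1 \subseteq I$ this reduces to the fact that $G$ is generated by $B^+$ and those lifted simple reflections. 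As $Y_{\leq y} = \bigcup_{v \leq y} Y_v$ is visibly $I$-stable, it therefore suffices to show that each such $P_s$ preserves $Y_{\leq y}$.

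For this I would combine two ingredients. The first is the rank-one input that for every simple reflection $s \in S$ and every $v \in W$,
\[
    P_s \cdot Y_v \ \subseteq\ Y_v \cup Y_{sv},
\]
equivalently $IsI \cdot vI^- \subseteq IvI^- \cup I(sv)I^-$ in $G(F)$; this is the thick-flag-variety counterpart of the Demazure product relation on $X$, and it follows from the rank-one ($\mathrm{SL}_2$) computation attached to $s$, being part of Kashiwara's analysis of $Y$ in \cite{Kash}. The second is the Bruhat-combinatorial fact that every simple reflection of $W_f$ is a left descent of $y$: writing $y = w_0 u$ with $w_0$ the longest element of $W_f$ and $u \in {}^fW$, and using $\ell(sw_0) = \ell(w_0) - 1$ together with $\ell(w_0 u) = \ell(w_0) + \ell(u)$ and $\ell(sw_0 u) = \ell(sw_0) + \ell(u)$ (these last two because $u \in {}^fW$), one gets $sy < y$.

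Granting these, the lifting property of the Bruhat order gives that $v \leq y$ implies $sv \leq y$ (immediate when $v = y$, and otherwise it is the lifting property for the left descent $s$ of $y$). Hence for every $v \leq y$ one has $P_s \cdot Y_v \subseteq Y_v \cup Y_{sv} \subseteq Y_{\leq y}$, and taking the union over all $v \leq y$ gives $P_s \cdot Y_{\leq y} \subseteq Y_{\leq y}$; the reverse inclusion is obvious, so $Y_{\leq y}$ is $P_s$-stable. Letting $s$ range over the simple reflections of $W_f$, we conclude that $Y_{\leq y}$ is $G(\mathcal{O}^+)$-stable, hence $I'$-stable.

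For the last assertion, $I'$ now acts on $Y_{\leq y}$; moreover $K_y$, being a principal congruence subgroup of $G(\mathcal{O}^+)$, is contained in $K_1 \subseteq I'$ and is normal in $G(\mathcal{O}^+)$, hence normal in $I'$, so — exactly as for the $I$-action in Section \ref{sec:thick} — the $I'$-action descends to the quotient $Z_{\leq y} = K_y \backslash Y_{\leq y}$, and since $K_y$ acts trivially there, this descended action factors through $I'/K_y$. The only point that is not purely formal is the rank-one inclusion $P_s \cdot Y_v \subseteq Y_v \cup Y_{sv}$ on the infinite-type scheme $Y$, which I would cite (or verify by reduction to $\mathrm{SL}_2$) rather than reprove; everything else is Bruhat-order bookkeeping and elementary group theory.
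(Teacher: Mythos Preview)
The paper states Lemma~\ref{lem:y'} without proof, so there is nothing to compare against; your argument supplies exactly the missing verification. Your strategy---upgrading from $I$-stability to $G(\mathcal{O}^+)$-stability via the minimal parahorics $P_s$ for $s$ a finite simple reflection, and then specializing to $I' \subset G(\mathcal{O}^+)$---is sound, and in fact proves the slightly stronger fact that $Y_{\leq y}$ is $G(\mathcal{O}^+)$-stable whenever $y \in w_0\,{}^fW$. The Bruhat-order step (that $sy < y$ for every finite simple $s$, hence $v \leq y \Rightarrow sv \leq y$ by the lifting property) is correct as written; the length computation $\ell(sw_0 u) = \ell(sw_0) + \ell(u)$ for $u \in {}^fW$ is exactly the defining property of minimal coset representatives.

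Two minor points of presentation. First, when you write ``$P_s = IsI$'' you mean the minimal parahoric $P_s = I \cup IsI$; this does not affect the argument since $I$ already stabilizes each $Y_v$, but it is worth writing correctly. Second, the rank-one inclusion $IsI \cdot Y_v \subseteq Y_v \cup Y_{sv}$ is, as you note, the only substantive input; it is the standard $BN$-pair relation $BsB \cdot BwB \subseteq BwB \cup BswB$ transported to the mixed $(I, I^-)$ setting, and either a citation to Kashiwara's setup or a direct $\mathrm{SL}_2$ check suffices. The final descent to $Z_{\leq y}$ via normality of $K_y$ in $G(\mathcal{O}^+)$ is routine and correctly handled.
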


This means that the category $D_{\mathrm{IW}}^b(Z_{\leq y})$ is well-defined for any $y \in w_0 {}^fW$, and that $\mathrm{Av}_{\psi}$ is a well-defined functor $D^b_I(Z_{\leq y}) \to D^b_{\mathrm{IW}}(Z_{\leq y})$ for such $y$. We note that in Definition \ref{def:laxlim}, we could just as easily have defined $D_I^b(Y)$ as the lax limit over categories indexed by only those $y \in w_0{}^fW$ rather than using all of $W$, and obtained the very same category $D_I^b(Y)$. We then define $D_{\mathrm{IW}}^b(Y)$ in this way, as the lax limit over the dg categories $D_{\mathrm{IW}}^b(Z_{\leq y})$ for $y \in w_0{}^fW$ using the same gluing functors as in Definition \ref{def:laxlim}. We then see that this alternate definition for $D_I^b(Y)$ turns the collection of Iwahori-Whittaker averaging functors $D^b_I(Z_{\leq y}) \to D^b_{\mathrm{IW}}(Z_{\leq y})$ for $y \in w_0{}^fW$ into a well-defined functor $\mathrm{Av}_{\psi} : D_I^b(Y) \to D_{\mathrm{IW}}^b(Y)$.

Just as in Proposition \ref{prop:tstruct}, there is a well-defined perverse $t$-structure on $D_{\mathrm{IW}}^b(Y)$, and we let $\mathrm{Perv}_{\mathrm{IW}}(Y)$ be its heart. We then let ${}^\psi \tilde{\Delta}_w = \mathrm{Av}_{\psi}(\tilde{\Delta}_w)$ for any $w \in W$, noting that if $w = yz$ for $y \in W_{f}$, then ${}^\psi \tilde{\Delta}_w = {}^\psi \tilde{\Delta}_z$. We can then use the same construction as in \cite{Y} for $R_{X \to Y}$ to define an Iwahori-Whittaker Radon transform functor $R_{X \to Y}^{\mathrm{IW}} : D_{\mathrm{IW}}^b(X) \to D_{\mathrm{IW}}^b(Y)$. 

\begin{definition}[\cite{Y}]
    Let $D_{\Delta, I}^b(Y)$ be the subcategory of $D_I^b(Y)$ generated under distinguished triangles by the standard objects $\tilde{\Delta}_w$ for $w \in W$.

    Similarly, let $D_{\Delta, \mathrm{IW}}^b(Y)$ be the subcategory of $D_{\mathrm{IW}}(Y)$ generated under distinguished triangles by ${}^\psi \tilde{\Delta}_w$ for $w \in {}^fW$.
\end{definition}

\begin{theorem}[\cite{Y}]\label{thm:iwequiv}
    $R_{X \to Y}$ (resp. $R_{X \to Y}^{\mathrm{IW}}$) is an equivalence of categories from $D_I^b(X)$ to $D_{\Delta, I}^b(Y)$ (resp. from $D_{\mathrm{IW}}^b(X)$ to $D_{\Delta, \mathrm{IW}}^b(Y)$) satisfying the following properties:
    \begin{enumerate}[label=(\roman*)]
        \item For any $w \in W$, \begin{align*}
            R_{X \to Y}(\nabla_{w}) & = \tilde{\Delta}_{w}\\
            R_{X \to Y}^{\mathrm{IW}}({}^\psi\nabla_{w}) & = {}^\psi\tilde{\Delta}_{w}\\
        \end{align*}
        \item The diagram
        \[\begin{tikzcd}
            D_{I}^b(X) \arrow[r, "R_{X \to Y}"] \arrow[d, "\mathrm{Av}_{\psi}"] & D_{\Delta, I}^b(Y) \arrow[d, "\mathrm{Av}_{\psi}"]\\
            D_{\mathrm{IW}}^b(X) \arrow[r, "R_{X \to Y}^{\mathrm{IW}}"] & D_{\Delta, \mathrm{IW}}^b(Y)
        \end{tikzcd}\]
        commutes.
        \item Tilting objects of $\mathrm{Perv}_I(X)$ (resp. $\mathrm{Perv}_{\mathrm{IW}}(X)$) map to projective objects of $\mathrm{Perv}_I(Y)$ (resp. $\mathrm{Perv}_{\mathrm{IW}}(Y)$) (c.f. \cite[4.2.1(1)]{Y}).
    \end{enumerate}
\end{theorem}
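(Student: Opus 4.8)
The plan is to reconstruct, following \cite{Y}, the Radon transform as integration over the ``big cell'' correspondence, to carry this out inside the lax-limit formalism of Section~\ref{sec:laxlimit}, and then to read off (i)--(iii). Concretely, let $\mathcal{I} \subset X \times Y$ be the diagonal $G(F)$-orbit of the pair of base points $(eI, eI^-)$, with the two projections $p_X \colon \mathcal{I} \to X$ and $p_Y \colon \mathcal{I} \to Y$, and set $R_{X \to Y}(\mathcal{F}) = p_{Y!}\, p_X^* \mathcal{F}$ up to a cohomological shift. Since $p_Y$ is not of finite type, one defines $p_{Y!}$ by truncation: for each $w$ the intersection of $\mathcal{I}$ with $X_{\leq w} \times Y_{\leq y}$ (for a suitable $y = y(w)$) is a finite-type locally closed subscheme which, after quotienting by $K_y$, descends to a genuine finite-type correspondence $X_{\leq w} \xleftarrow{p} \mathcal{I}_w \xrightarrow{q} Z_{\leq y}$; put $R_w = q_!\, p^*[\,\cdot\,]$. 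The heart of the construction is to check that the $R_w$ intertwine the gluing functors $\Phi_{y,z}$ of Definition~\ref{def:laxlim}, so that they assemble into a single functor $D_I^b(X) \to D_I^b(Y)$; replacing $I'$-equivariance by $(I_u',\psi)$-equivariance (legitimate on each truncation by Lemma~\ref{lem:y'}, and compatible with the construction because $\mathcal{I}$ is $I'$-stable) produces $R_{X \to Y}^{\mathrm{IW}}$.

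To prove (i) I would compute $R_{X \to Y}(\nabla_w)$ orbit by orbit: since $\nabla_w = j_{w*}^X(\underline{\mathbb{C}})[\ell(w)]$ is $*$-extended from $X_w$ and $\mathcal{I}$ is $G(F)$-stable, the $*$-restriction of $R_{X \to Y}(\nabla_w)$ to each $Y_z$ is computed from the cohomology of a single fiber of $\mathcal{I}_w \to Z_{\leq y}$, which is an affine space --- this is the affine-flag incarnation of the classical orbit-by-orbit analysis of the long intertwining functor on $G/B$. Combined with the numerology $\mathrm{codim}(Y_w) = \ell(w) = \mathrm{dim}(X_w)$, this identifies $R_{X \to Y}(\nabla_w)$ with the standard object $\tilde{\Delta}_w$; carrying the Artin--Schreier twist through the same computation gives $R_{X \to Y}^{\mathrm{IW}}({}^\psi\nabla_w) = {}^\psi\tilde{\Delta}_w$. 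The equivalence then follows: essential surjectivity onto $D_{\Delta, I}^b(Y)$ is immediate, since the $\nabla_w$ generate $D_I^b(X)$ under triangles while the $\tilde{\Delta}_w$ generate $D_{\Delta, I}^b(Y)$ by definition; full faithfulness reduces by a standard d\'evissage to the isomorphism $\mathrm{Hom}^\bullet(\nabla_y, \nabla_w) \xrightarrow{\sim} \mathrm{Hom}^\bullet(\tilde{\Delta}_y, \tilde{\Delta}_w)$, which one verifies by producing the inverse Radon transform $R_{Y \to X}$ (integration over $\mathcal{I}$ in the other direction) as an adjoint and checking that the unit and counit are isomorphisms on these generators.

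Property (ii) is then formal: both $\mathrm{Av}_{\psi} \circ R_{X \to Y}$ and $R_{X \to Y}^{\mathrm{IW}} \circ \mathrm{Av}_{\psi}$ are integration over the same $G(F)$-stable correspondence with the Whittaker averaging inserted, and since $\mathrm{Av}_{\psi} = {}^\psi\Delta_e * (-)$ and convolution is associative, the square commutes (equivalently, apply base change around the evident diagram of correspondences). For (iii), c.f.\ \cite[4.2.1(1)]{Y}: once $R_{X \to Y}$ is known to be an equivalence onto $D_{\Delta, I}^b(Y)$ sending costandards to standards and (by applying Verdier duality $\mathbb{D}$, interpreted on the infinite-type $Y$ via its truncations) standards to costandards, it is precisely a Ringel-duality equivalence, so by the formal properties of Ringel duality it carries a tilting object of $\mathrm{Perv}_I(X)$ --- being a summand of an object with both a $\Delta$- and a $\nabla$-filtration --- to an object with a $\tilde{\Delta}$-filtration which is moreover projective in $\mathrm{Perv}_I(Y)$; the Iwahori--Whittaker statement is the same argument for $R_{X \to Y}^{\mathrm{IW}}$, or follows by combining (ii) with the $\mathrm{Perv}_I$-version.

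The main obstacle is the construction itself, not the extraction of (i)--(iii): one must show that the finite-type truncations $R_w$ genuinely commute with the lax-limit gluing functors $\Phi_{y,z}$, which requires unwinding the precise form of $\Phi_{y,z}$ (an $!$-extension post-composed with a shifted smooth pullback) and repeated proper base change around the relevant correspondences. A secondary subtlety, needed even to state the duality input used in (iii), is to make sense of Verdier duality on the infinite-type scheme $Y$ compatibly with the truncations $Z_{\leq w}$.
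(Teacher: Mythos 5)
The paper does not actually prove this theorem; it states it as a recollection from \cite{Y} (with (iii) attributed to \cite[4.2.1(1)]{Y}), so there is no in-paper argument to compare against. Your proposal is therefore a reconstruction of Yun's proof, not a restatement of one in the text, and one should assess it on its own terms.

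The overall architecture --- define $R_{X\to Y}$ via the big-cell correspondence $\mathcal{I}\subset X\times Y$, push the construction through the finite-type truncations $Z_{\leq w}$, prove (i) by fiberwise computation, get essential surjectivity from the generators and full faithfulness from an adjoint/inverse transform, deduce (ii) from associativity of convolution, and deduce (iii) from the Ringel-duality shape of the equivalence --- is a faithful description of how \cite[5.2, 4.2.1]{Y} proceeds, and you correctly isolate the two genuine technical burdens: compatibility of the truncated transforms with the lax-limit gluing functors $\Phi_{y,z}$, and making sense of the relevant duality on the infinite-type $Y$. Two points deserve caution. First, in (iii) you invoke ``applying Verdier duality'' to claim $R_{X\to Y}$ also sends standards to costandards; this is not formal, since a $p_!p^*$-type transform does not commute with $\mathbb{D}$ --- the correct statement is that $R_{X\to Y}$ and a separately defined $*$-version $R_{Y\to X}$ are exchanged by duality, and one either computes $R_{X\to Y}(\Delta_w)$ directly or argues projectivity via an Ext-vanishing calculation with the $\tilde{\Delta}$-filtration, which is what \cite[4.2.1]{Y} does. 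Second, the choice of $p_{Y!}p_X^*$ versus $p_{Y*}p_X^!$ and the exact shift need to be pinned down to get $R_{X\to Y}(\nabla_w)=\tilde\Delta_w$ on the nose rather than up to a twist; you hedge appropriately, but this is where the opposite closure relations of $I$-orbits on $X$ versus $Y$ enter and where sign conventions can silently go wrong. Neither issue is a conceptual gap --- both are resolved in \cite{Y} --- but as written your (iii) is an overclaim if taken literally rather than as a citation.
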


\subsection{A counterexample for the na{\"i}ve generalization}\label{sec:counterexamplenaive}

In $\mathrm{Perv}_I(X)$, it follows from Kazhdan-Lusztig theory that there is a surjection $\Delta_{s_1s_0} \to \mathrm{IC}_{s_1s_0}$, whose kernel $K$ fits into an exact sequence
\begin{align}
    0 \to \Delta_e \to \Delta_{s_1} \oplus \Delta_{s_0} \to K \to 0.
\end{align}
Taking Verdier duals and letting $C = \mathbb{D}K$, we get the distinguished triangles

\begin{align}
    \mathrm{IC}_{s_1s_0} \to & \nabla_{s_1s_0} \to C,\\
    C \to \nabla_{s_1} & \oplus \nabla_{s_0} \to \nabla_e,
\end{align}
in $D_I^b(X)$, which give the distinguished triangles
\begin{align}
    R_{X \to Y}(\mathrm{IC}_{s_1s_0}) \to & \tilde{\Delta}_{s_1s_0} \to R_{X \to Y}(C)\label{eqn:ic}\\
    R_{X \to Y}(C) \to \tilde{\Delta}_{s_1} & \oplus \tilde{\Delta}_{s_0} \to \tilde{\Delta}_e\label{eqn:c}
\end{align}
in $D_I^b(Y)$. The long exact sequence of perverse cohomology associated to (\ref{eqn:c}) shows that
\begin{align}
    {}^pH^1(R_{X \to Y}(C)) = \mathrm{coker}(\tilde{\Delta}_{s_1} \oplus \tilde{\Delta}_{s_0} \to \tilde{\Delta}_e),
\end{align}
while the long exact sequence of perverse cohomology associated to (\ref{eqn:ic}) shows that
\begin{align}
    {}^pH^2(R_{X\to Y}(\mathrm{IC}_{s_1s_0})) \cong {}^pH^1(R_{X\to Y}(C))
\end{align}
It is a straightforward computation that the perverse sheaf $\mathrm{coker}(\tilde{\Delta}_{s_1} \oplus \tilde{\Delta}_{s_0} \to \tilde{\Delta}_e)$ is constant, i.e. each element in its representing tuple of objects in the categories $D_I^b(Z_{\leq w})$ is constant. Since evidently the constant sheaf is not supported on finitely many strata, it cannot lie in $D_{\Delta, I}^b(Y)$, and therefore is not the image of $R_{X \to Y}$. This means ${}^p\tau_{\geq 2}R_{X \to Y}(\mathrm{IC}_{s_1s_0})$ is not in the image of $R_{X \to Y}$, so there is no natural way to lift the perverse truncation functor ${}^p\tau_{\geq 2}$ on $D^b_I(Y)$ to a well-defined truncation functor on $D_I^b(X)$.

\section{The preimage of the perverse $t$-structure under the Radon transform}

\subsection{Comparison of the $t$-structures}

\begin{proposition}\label{prop:thickinbimod}
    If $\mathcal{F} \in D_{I_u}^b(X)$ is an object satisfying $R_{X \to Y}(\mathcal{F}) \in D_{I_u}^{\leq 0}(Y)$ (resp. $R_{X \to Y}(\mathcal{F}) \in D_{I_u}^{\geq 0}(Y)$), then $\mathcal{F}$ lies in ${}^{\mathrm{bimod}}D^{\leq 0}$ (resp. ${}^{\mathrm{bimod}}D^{\geq 0}$), i.e. is coconnective (resp. connective) with respect to the noncommutative Springer $t$-structure on $D_{I_u}^b(X)$.
\end{proposition}
\begin{proof}
    Let $\mathcal{F}$ be as in the proposition. By Lemma \ref{lem:centralrhom} and the equivalences (\ref{eqn:bimodleq}) and (\ref{eqn:bimodgeq}), it is enough to show that for any $V \in \mathrm{Rep}(G^\vee)$ and any $i > 0$ (resp. $i < 0$),
    \begin{align}
        \mathrm{R}^i\mathrm{Hom}(\tilde{\mathcal{E}} * \mathcal{Z}_V, \mathcal{F}) = 0.
    \end{align}
    By the discussion in \cite[Section 2.4]{BM} applied to the Steinberg variety rather than simply to $\tilde{\mathfrak{g}}$ itself, the tilting generator $\tilde{\mathcal{E}}$ as a coherent sheaf can be described as a direct sum of iterated applications of reflection functors $R_{\alpha_i}$ to the structure sheaf $\mathcal{O}_{\mathrm{St'}}$, applied on the $\tilde{\mathfrak{g}}$ and $\tilde{\mathcal{N}}$ side of $\mathrm{St}'$. More precisely, it is shown in \cite{B} that $\mathcal{O}_{\mathrm{St}'}$ corresponds under (\ref{eqn:dbcoh}) to the constructible sheaf $\Xi$ (the indecomposable tilting object in $\mathrm{Perv}_{I_u}(X)$ indexed by $w_0$) and it is explained in \cite[Proof of Theorem 54]{B} that applying these two kinds of reflection functors $R_{\alpha_i}$ corresponds to convolving with the tilting objects $\mathcal{T}_{s_i}$ on the left and on the right. This means we can write $\tilde{\mathcal{E}}$ as a direct sum of objects of the form
    \begin{align}
        \mathcal{T}_{s_{i_k}} * \dots \mathcal{T}_{s_{i_1}} * \Xi * \mathcal{T}_{s_{j_1}} * \dots \mathcal{T}_{s_{j_k}}
    \end{align}
    for some sequences $s_{i_r}$ and $s_{j_r}$ of simple reflections. By the centrality property of $\mathcal{Z}_{V}$, we have
    \begin{align}\label{eqn:iteratedconv}
        \mathcal{T}_{s_{i_k}} * \dots \mathcal{T}_{s_{i_1}} * \Xi * \mathcal{T}_{s_{j_1}} * \dots \mathcal{T}_{s_{j_k}} * \mathcal{Z}_{V} \cong \mathcal{T}_{s_{i_k}} * \dots \mathcal{T}_{s_{i_1}} * \Xi * \mathcal{Z}_{V} * \mathcal{T}_{s_{j_1}} * \dots \mathcal{T}_{s_{j_k}}.
    \end{align}
    It is shown in \cite{AB} that $\Xi * \mathcal{Z}_V$ is tilting for any $V$ (note that for any $A$, $\Xi * A$ = $\mathrm{Av}_{I_u}\mathrm{Av}_{\psi}A$ where $\mathrm{Av}_{I_u} : D_{\mathrm{IW}}^b(X) \to D_{I_u}^b(X)$ is the $I_u$-averaging functor defined in \cite[Section 2]{AB}, c.f. Remark 3 in loc.\ cit.)

    This means the expression in (\ref{eqn:iteratedconv}) is a convolution of tilting objects, and is therefore itself tilting. This implies that for any $V$, $\tilde{\mathcal{E}} * \mathcal{Z}_V$ is in fact tilting. This, together with the results of \cite{Y}, imply that $R_{X \to Y}(\tilde{\mathcal{E}} * \mathcal{Z}_V)$ is a projective perverse sheaf in $\mathrm{Perv}_{I_u}(Y)$, meaning that for any $i > 0$ (resp. $i < 0$),
    \begin{align}
        \mathrm{R}^i\mathrm{Hom}(\tilde{\mathcal{E}} * \mathcal{Z}_{V}, \mathcal{F}) & = \mathrm{R}^i\mathrm{Hom}(R_{X \to Y}(\tilde{\mathcal{E}} * \mathcal{Z}_{V}), R_{X \to Y}(\mathcal{F})) = 0.
    \end{align}
\end{proof}

\begin{prop}\label{prop:tfae}
    For an object $\mathcal{F} \in D_{\mathrm{IW}}^b(X)$, the following are equivalent:
    \begin{enumerate}
        \item $\mathcal{F} \in {}^{\mathrm{mod}}D^{\leq 0}$ (resp. $\mathcal{F} \in {}^{\mathrm{mod}}D^{\geq 0}$),\label{eqn:modcase}
        \item For any indecomposable tilting object $\mathcal{T}_w \in \mathrm{Perv}_{\mathrm{IW}}(X)$, $\mathrm{Hom}(\mathcal{T}_w, \mathcal{F}[i]) = 0$ for any $i > 0$ (resp. $i < 0$).\label{eqn:tiltcase},
        \item $R_{X \to Y}^{\mathrm{IW}}(\mathcal{F}) \in D_{\mathrm{IW}}^{\leq 0}(Y)$ (resp. $R_{X \to Y}^{\mathrm{IW}}(\mathcal{F}) \in D_{\mathrm{IW}}^{\geq 0}(Y)$).\label{eqn:thickcase}
    \end{enumerate}
\end{prop}

\begin{proof}[Proof of Theorem \ref{thm:tstructure}] To show that (\ref{eqn:modcase}) implies (\ref{eqn:tiltcase}), suppose $\mathcal{F} \in {}^{\mathrm{mod}}D^{\leq 0}$ (resp. $\mathcal{F} \in {}^{\mathrm{mod}}D^{\geq 0}$). Then for any indecomposable tilting object $\mathcal{T}_w \in \mathrm{Perv}_{\mathrm{IW}}$ and any $i > 0$ (resp. $i < 0$), we have that
\begin{align}\label{eqn:rhomtw}
    \mathrm{Hom}_{D^b_{\mathrm{IW}}(X)}(\mathcal{T}_w, \mathcal{F}[i]) & \cong \mathrm{Hom}_{D^b(\mathrm{Coh}^{G^\vee}(\tilde{\mathcal{N}}))}(\mathcal{A}_{\mathcal{T}_w}, \mathcal{A}_\mathcal{F}[i])\\ 
    & \cong \mathrm{Hom}_{A^0-\mathrm{mod}^{G^\vee}}(\mathcal{M}_{\mathcal{T}_w}, \mathcal{M}_\mathcal{F}[i])\\
    & = 0,
\end{align}
since by Proposition \ref{prop:tiltingprojectivemod}, $\mathcal{M}_{\mathcal{T}_w}$ is a projective $A^0$-module.

Now suppose $\mathcal{F}$ satisfies (\ref{eqn:tiltcase}), i.e. $\mathrm{Hom}(\mathcal{T}_w, \mathcal{F}[i]) = 0$ for any $i > 0$ (resp. $i < 0$). We recall that by the results of \cite{Y}, $R_{X \to Y}^{\mathrm{IW}}(\mathcal{T}_w) = \tilde{\mathcal{P}}_w$, where $\tilde{\mathcal{P}}_w$ is the unique indecomposable projective in $\mathrm{Perv}_{\mathrm{IW}}(Y)$ which admits a standard filtration by $\tilde{\Delta}_{y}$ for $y \leq w$. Since $R_{X \to Y}^{\mathrm{IW}}$ is fully faithful, the condition in (\ref{eqn:tiltcase}) implies that
\begin{align}
    \mathrm{Hom}_{D_{\mathrm{IW}}^b(Y)}(R_{X \to Y}^{\mathrm{IW}}(\mathcal{T}_w), R_{X \to Y}^{\mathrm{IW}}(\mathcal{F})[i]) & = \mathrm{Hom}_{D_{\mathrm{IW}}^b(Y)}(\tilde{\mathcal{P}}_w, R_{X \to Y}^{\mathrm{IW}}(\mathcal{F})[i])
\end{align}
vanishes for $i > 0$ (resp. $i < 0$) for all $w \in {}^fW$. Since $\{\tilde{\mathcal{P}}_w\}_{w \in {}^fW}$ are a collection of projective generators for $\mathrm{Perv}_{\mathrm{IW}}(Y)$, this is equivalent to the condition that $R_{X \to Y}^{\mathrm{IW}}(\mathcal{F}) \in D_{\mathrm{IW}}^{\leq 0}(Y)$ (resp. $D_{\mathrm{IW}}^{\geq 0}(Y)$), so (\ref{eqn:thickcase}) is satisfied.

Finally, applying the same argument as in the proof of Proposition \ref{prop:thickinbimod} to the Iwahori-Whittaker setting gives that (\ref{eqn:thickcase}) implies (\ref{eqn:modcase}).

We have then shown that (\ref{eqn:modcase}) implies (\ref{eqn:tiltcase}), which implies (\ref{eqn:thickcase}), which in turn implies (\ref{eqn:modcase}), and therefore all of these conditions are equivalent.
\end{proof}

The equivalence of (\ref{eqn:modcase}) and (\ref{eqn:thickcase}) in Proposition \ref{prop:tfae} says that the NCS $t$-structure, a bounded $t$-structure on $D_{\mathrm{IW}}^b(X)$, agrees with the preimage of the perverse $t$-structure on $D_{\mathrm{IW}}^b(Y)$ under the Radon transform functor $R_{X\to Y}^{\mathrm{IW}}$; this is exactly the statement of Theorem \ref{thm:tstructure}.

\begin{remark}
    We note that in Proposition \ref{prop:tfae}, the proof of the implication (\ref{eqn:modcase}) implies (\ref{eqn:tiltcase}) is the only argument which does not generalize beyond the Iwahori-Whittaker setting to $D_{I_u}^b(X)$. This is because this implication relies on Proposition \ref{prop:tiltingprojectivemod}, which is only true in the Iwahori-Whittaker setting.
\end{remark}

\section{Convolution-exact and tilting sheaves}

We recall that whenever $\mathcal{F} \in D_{I_u}^b(X)$, we let $\mathcal{A}_{\mathcal{F}}$ and $\mathcal{M}_{\mathcal{F}}$ denote the respective elements of $D^b\left(\mathrm{Coh}^{G^\vee}(\mathrm{St}')\right)$ and $D^b\left(A \otimes_{\mathcal{O}(\mathfrak{g}^\vee)} (A^0)^{\mathrm{op}} - \mathrm{mod}_{\mathrm{fg}}^{G^\vee}\right)$ under the equivalence (\ref{eqn:dbcoh}) of Theorem \ref{thm:equivs}. We use the same notation for $\mathcal{F} \in D_{\mathrm{IW}}^b(X)$ and the equivalence (\ref{eqn:iwcoh}). When $M$ is a complex of modules on the right side of one of these equivalences, we let $\mathcal{F}_M$ and $\mathcal{A}_M$ be the image of $M$ on the left and middle of the equivalence.

\begin{lemma}\label{lem:zerosupp}
    Let $\mathcal{F} \in D_{I_u}(X)$. Then if $\mathcal{A}_\mathcal{F}$ is set-theoretically supported at the preimage of $0 \in \mathfrak{g}^\vee$ under $\mathrm{St}' \to \mathfrak{g}^\vee$, then the following are equivalent:
    \begin{enumerate}
        \item $\mathcal{M}_{\mathcal{F}}$ lies in the heart of the tautological $t$-structure on $D^b(A\otimes_{\mathcal{O}(\mathfrak{g}^\vee)}(A^0)^{\mathrm{op}}-\mathrm{mod}^{G^\vee}_{\mathrm{fg}})$,
        \item $\mathcal{F}[-\tfrac{\dim \mathcal{N}}{2}]$ lies in the heart of the perverse $t$-structure on $D_{I_u}(X)$. 
    \end{enumerate}
\end{lemma}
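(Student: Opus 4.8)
The plan is to transport both conditions across the equivalence (\ref{eqn:dbcoh}) and then to recognise the statement as the instance, for the closed orbit $\{0\}\subset\mathfrak{g}^\vee$, of the comparison between the noncommutative Springer $t$-structure and the perverse $t$-structure (the latter in its coherent guise, as a perverse coherent $t$-structure).

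First I would reformulate the two conditions on the coherent side. By (\ref{eqn:bimodleq})--(\ref{eqn:bimodgeq}), condition (1) says exactly that $\mathcal{A}_{\mathcal{F}}$ lies in the heart of the NCS $t$-structure on $D^b(\mathrm{Coh}^{G^\vee}(\mathrm{St}'))$, i.e.\ that $\mathrm{RHom}(\tilde{\mathcal{E}},\mathcal{A}_{\mathcal{F}})$ is concentrated in degree $0$. On the other hand, under (\ref{eqn:dbcoh}) the perverse $t$-structure on $D^b_{I_u}(X)$ goes to the middle-perversity perverse coherent $t$-structure on $\mathrm{St}'$ (for the stratification by preimages of adjoint orbits of $\mathfrak{g}^\vee$), up to a global cohomological shift by $\tfrac12\dim\mathrm{St}'=\tfrac12\dim\mathcal{N}$; this is part of the coherent description of $\mathrm{Perv}_{I_u}(X)$ recalled from \cite{B}, and it is the reason the shift $[-\tfrac12\dim\mathcal{N}]$ appears in (2). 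Hence (2) is equivalent to $\mathcal{A}_{\mathcal{F}}$ lying in the perverse coherent heart. Both the NCS and the perverse coherent $t$-structures restrict to the full subcategory $\mathcal{C}_0\subset D^b(\mathrm{Coh}^{G^\vee}(\mathrm{St}'))$ of complexes set-theoretically supported over $0\in\mathfrak{g}^\vee$ (equivalently, the image of $D^b$ of finitely generated $\hat A\otimes_{\mathcal{O}(\hat 0)}(\hat A^0)^{\mathrm{op}}$-modules), since their truncation functors preserve set-theoretic support; the hypothesis on $\mathcal{F}$ puts $\mathcal{A}_{\mathcal{F}}$ in $\mathcal{C}_0$. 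So the lemma becomes the assertion that these two $t$-structures agree on $\mathcal{C}_0$.

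To prove this I would exploit that $0$ is the minimal element for the closure order on adjoint orbits, so that in the recollement presentations of the two $t$-structures along this stratification --- which, for the perverse and NCS $t$-structures, is essentially what is carried out in \cite[Section 11]{B} --- the subcategory $\mathcal{C}_0$ is the initial layer, and the restricted $t$-structure is the corresponding seed; it then suffices to match the two seeds. The geometry over $0$ is favourable: the fibre of $\mathrm{St}'\xrightarrow{\mathrm{pr}_1}\tilde{\mathfrak{g}}^\vee\to\mathfrak{g}^\vee$ over $0$ is $\mathcal{B}\times\mathcal{B}$, with $\mathcal{B}$ the flag variety of $G^\vee$, which is smooth, irreducible, and of dimension $\dim\mathcal{N}=\dim\mathrm{St}'$, hence an irreducible component of $\mathrm{St}'$; so the middle perversity assigns the single value $\tfrac12\dim\mathcal{N}$ to this stratum, and the perverse coherent seed on $\mathcal{C}_0$ is the unique Serre-duality-stable bounded $t$-structure containing (the formal-local restriction of) $\mathcal{O}_{\mathrm{St}'}$ in its heart. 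To identify this with the NCS seed, I would use the description of $\tilde{\mathcal{E}}$ from the proof of Proposition \ref{prop:thickinbimod} as a direct sum of iterated convolutions of $\Xi\leftrightarrow\mathcal{O}_{\mathrm{St}'}$ with the parahoric tilting sheaves $\mathcal{T}_s$: the reflection functors realising these convolutions are $t$-exact for both $t$-structures, preserve $\mathcal{C}_0$, and are self-adjoint, so the defining vanishing $\mathrm{RHom}^{\neq 0}(\tilde{\mathcal{E}},-)=0$ on $\mathcal{C}_0$ is converted into a vanishing of $\mathrm{RHom}^{\neq 0}(\mathcal{O}_{\mathrm{St}'},-)=R\Gamma^{\neq 0}(\mathrm{St}',-)$ on twisted objects of $\mathcal{C}_0$; together with Serre duality on $\mathrm{St}'$ (to handle negative degrees) this pins down the NCS seed to the same self-dual $t$-structure.

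The main obstacle is this last point: showing that over the single top-dimensional stratum $\mathcal{B}\times\mathcal{B}$ the perverse coherent recollement seed is the one computed by $\mathrm{RHom}(\tilde{\mathcal{E}},-)$, equivalently that $\tilde{\mathcal{E}}$ restricts to a perverse-coherent-tilting object on the formal neighbourhood of $\mathcal{B}\times\mathcal{B}$ in $\mathrm{St}'$. The remaining ingredients --- $t$-exactness and support-preservation of the reflection functors, boundedness of both $t$-structures on $\mathcal{C}_0$, and the bookkeeping of the $\tfrac12\dim\mathcal{N}$ shift --- are routine given the coherent dictionary of \cite{B} and \cite{BM}.
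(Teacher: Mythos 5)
The paper's proof of this lemma is a one-step deduction from \cite[Theorem 54(a)]{B}: that theorem says the support filtration by nilpotent orbits on the coherent side is compatible with the perverse $t$-structure, and on the associated-graded layer for an orbit $O$ the perverse $t$-structure is the tautological (NCS) $t$-structure shifted by $(\dim\mathcal{N}-\dim O)/2$. Taking $O=\{0\}$, the minimal orbit, the layer in question is exactly the full subcategory of complexes set-theoretically supported over $0$, and the shift becomes $\dim\mathcal{N}/2$; this is precisely the statement of the lemma. There is nothing more to prove once this is cited.

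Your proposal correctly identifies the coherent dictionary (both conditions transported along (\ref{eqn:dbcoh}), the hypothesis placing $\mathcal{A}_{\mathcal{F}}$ in the support-at-$0$ subcategory, and the shift $\dim\mathcal{N}/2$ coming from the minimal orbit being codimension $\dim\mathcal{N}$). But instead of invoking the $t$-structure comparison of \cite[Theorem 54(a)]{B}, you try to re-derive it over the orbit $\{0\}$ from scratch, via the geometry of $\mathcal{B}\times\mathcal{B}$, reflection functors, and Serre duality. This is where the genuine gap lies, and you flag it yourself: you need that $\tilde{\mathcal{E}}$, restricted to the formal neighbourhood of $\mathcal{B}\times\mathcal{B}$, is a perverse-coherent tilting object (equivalently, that $\mathrm{RHom}(\tilde{\mathcal{E}},-)$ computes the perverse coherent seed there). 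This is not a routine verification to be filled in later --- it is essentially the assertion of Theorem 54(a) for the minimal orbit, i.e.\ the thing being proved. Neither the claimed ``unique Serre-duality-stable bounded $t$-structure containing $\mathcal{O}_{\mathrm{St}'}$'' characterization nor the reflection-functor reduction as sketched supplies it. If you are willing to cite the comparison between the perverse $t$-structure and the perverse coherent (equivalently, the shifted NCS) $t$-structure from \cite{B}, as the paper does, then your first two paragraphs already finish the proof and the third is unnecessary; if you insist on deriving it independently, the third paragraph needs to be replaced by an actual argument rather than a statement of the obstacle.
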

\begin{proof}
    This follows from the results of \cite{B}. Indeed, in loc.\ cit.\ it is explained that there is a filtration by thick subcategories on $D^b(A\otimes_{\mathcal{O}(\mathfrak{g}^\vee)}(A^0)^{\mathrm{op}}-\mathrm{mod}^{G^\vee}_{\mathrm{fg}})$ indexed by the partially-ordered set of nilpotent orbits $O \subset \mathcal{N}$. Then Theorem 54(a) of loc.\ cit.\ says that this support filtration is compatible with the image of the perverse $t$-structure under the equivalence in Theorem \ref{thm:equivs}, and that the induced $t$-structure on the associated graded category corresponding to a nilpotent orbit $O \subset \mathcal{N}$ coincides with the $t$-structure coming from the tautological one on $D^b(A\otimes_{\mathcal{O}(\mathfrak{g}^\vee)}(A^0)^{\mathrm{op}}-\mathrm{mod}^{G^\vee}_{\mathrm{fg}})$ shifted by $\tfrac{\dim \mathcal{N} - \dim O}{2}$.

    Setting $O = \{0\} \subset \mathcal{N}$, since this is orbit is minimal, the corresponding summand in the associated graded is equivalent to the full triangulated subcategory of $D^b(A\otimes_{\mathcal{O}(\mathfrak{g}^\vee)}(A^0)^{\mathrm{op}}-\mathrm{mod}^{G^\vee}_{\mathrm{fg}})$ consisting of objects such that each of their cohomology modules is set-theoretically supported at zero when considered as a module over the center $\mathcal{O}(\mathcal{N})$ (in other words, the corresponding element of $D^b(\mathrm{Coh}^{G^\vee}(\mathrm{St}'))$ is set-theoretically supported at the preimage of $0 \in \mathfrak{g}^\vee$ under $\mathrm{St}' \to \mathfrak{g}^\vee$). Given this observation, the result in the lemma is exactly the result in Theorem 54(a) of \cite{B} applied to this minimal orbit.
\end{proof}

We contextualize the relevance of \cite[Theorem 54]{B} to the results of the present paper and discuss more general statements related to the preceding lemma in Section \ref{sec:bfo}.

\begin{prop}\label{prop:convexactisbimod}
     Any convolution-exact perverse sheaf $\mathcal{F} \in \mathrm{Perv}_{\mathrm{IW}}(X)$ lies in ${}^{\mathrm{mod}}D^{\heartsuit}$. In fact, any convolution-exact perverse sheaf $\mathcal{F} \in \mathrm{Perv}_I(X)$ lies inside ${}^{\mathrm{bimod}}D^{\heartsuit}$.
\end{prop}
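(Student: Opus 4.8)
The plan is to test membership in the heart through the characterizations $(\ref{eqn:bimodleq})$--$(\ref{eqn:bimodgeq})$ of the noncommutative Springer $t$-structure, together with their Iwahori--Whittaker counterpart coming from $(\ref{eqn:iwcoh})$ via $\mathcal{E}_0$. Thus $\mathcal{M}_{\mathcal{F}}$ lies in ${}^{\mathrm{bimod}}D^{\heartsuit}$ exactly when $\mathrm{RHom}_{D^b(\mathrm{Coh}(\mathrm{St}'))}(\tilde{\mathcal{E}},\mathcal{A}_{\mathcal{F}})$ is concentrated in degree zero; decomposing this complex into its $G^\vee$-isotypic components and applying Lemma~\ref{lem:centralrhom}, the statement reduces to the vanishing
\[
\mathrm{R}^i\mathrm{Hom}_{D^b_{I_u}(X)}(\tilde{\mathcal{T}}\ast\mathcal{Z}_V,\mathcal{F})=0 \qquad (i\neq0,\ V\in\mathrm{Rep}(G^\vee)),
\]
and, in the same way, the Iwahori--Whittaker statement reduces to $\mathrm{R}^i\mathrm{Hom}_{D^b_{\mathrm{IW}}(X)}(\mathcal{T}\ast\mathcal{Z}_V,\mathcal{F})=0$ for $i\neq0$ and all $V$. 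The first thing to record is that $\tilde{\mathcal{T}}\ast\mathcal{Z}_V$ (respectively $\mathcal{T}\ast\mathcal{Z}_V$) is a tilting perverse sheaf: exactly as in the proof of Proposition~\ref{prop:thickinbimod}, $\tilde{\mathcal{E}}$ is a direct sum of iterated convolutions $\mathcal{T}_{s_{i_k}}\ast\cdots\ast\mathcal{T}_{s_{i_1}}\ast\Xi\ast\mathcal{T}_{s_{j_1}}\ast\cdots\ast\mathcal{T}_{s_{j_m}}$, one moves $\mathcal{Z}_V$ past the $\mathcal{T}_{s_j}$ by centrality, $\Xi\ast\mathcal{Z}_V$ is tilting by \cite{AB}, and a convolution of tiltings is tilting.

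Since both arguments of the $\mathrm{RHom}$ are perverse, the $i<0$ part vanishes automatically, and the whole problem is the positive $\mathrm{Ext}$ groups. To exploit convolution-exactness of $\mathcal{F}$, I would strip the tilting generator down to a single small object. Writing the tilting object $\mathcal{T}\ast\mathcal{Z}_V$ as a summand of a convolution of the form ${}^{\psi}\Delta_e\ast\mathcal{Z}_V\ast\mathcal{T}_{s_{i_1}}\ast\cdots\ast\mathcal{T}_{s_{i_k}}$ (using the description of tiltings from the proof of Proposition~\ref{prop:tiltingprojectivemod} together with centrality), and then successively using the self-adjointness of the wall-crossing functors $(-)\ast\mathcal{T}_s$, the biadjunction $(-)\ast\mathcal{Z}_V\dashv(-)\ast\mathcal{Z}_{V^*}$, and the invertibility of the $\nabla_w$ to transfer everything onto the second argument, one rewrites $\mathrm{R}^i\mathrm{Hom}(\mathcal{T}\ast\mathcal{Z}_V,\mathcal{F})$ as $\mathrm{R}^i\mathrm{Hom}({}^{\psi}\Delta_e,\mathcal{G})$, where $\mathcal{G}$ is obtained from $\mathcal{F}$ by convolving on the right with (perverse) standard, costandard, central and tilting objects; each such operation preserves convolution-exact perverse sheaves, so $\mathcal{G}$ is again convolution-exact and perverse. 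Since ${}^{\psi}\Delta_e={}^{\psi}\nabla_e$ is the (co)standard sheaf of the closed $(I_u',\psi)$-orbit, $\mathrm{R}^i\mathrm{Hom}({}^{\psi}\Delta_e,\mathcal{G})$ computes the costalk of $\mathcal{G}$ there, and Verdier duality identifies its positive part with $\mathrm{Ext}^{>0}_{D^b_{\mathrm{IW}}(X)}(\mathbb{D}\mathcal{G},{}^{\psi}\nabla_e)$.

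The crux, and the step I expect to be the main obstacle, is therefore the claim that a convolution-exact perverse sheaf $\mathcal{G}$ has a standard (equivalently, by self-duality of the class, a costandard) filtration — equivalently that $\mathrm{Ext}^{>0}(\mathbb{D}\mathcal{G},{}^{\psi}\nabla_e)=0$, i.e.\ that the costalk of $\mathcal{G}$ at the closed orbit is clean. This is false for a general perverse sheaf, so the argument must genuinely use convolution-exactness, and this is where the purely formal manipulations stop: I would deduce it from the comparison of the perverse and NCS $t$-structures along the filtration by nilpotent support from \cite[Theorem 54]{B} (of which Lemma~\ref{lem:zerosupp} is the instance for the zero orbit), which on each associated-graded piece matches the NCS heart with a shift of the perverse heart, so that convolution-exactness — which survives passage to the associated graded — forces the required degeneration orbit by orbit. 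An alternative route for the Iwahori--Whittaker statement is to transport the problem to the thick flag variety: $R_{X\to Y}^{\mathrm{IW}}(\mathcal{T}\ast\mathcal{Z}_V)$ is projective in $\mathrm{Perv}_{\mathrm{IW}}(Y)$ by Theorem~\ref{thm:iwequiv}, and since $R_{X\to Y}^{\mathrm{IW}}$ is fully faithful the $\mathrm{Ext}$ group becomes $\mathrm{Hom}_{\mathrm{Perv}_{\mathrm{IW}}(Y)}\big(R_{X\to Y}^{\mathrm{IW}}(\mathcal{T}\ast\mathcal{Z}_V),\,{}^pH^i(R_{X\to Y}^{\mathrm{IW}}(\mathcal{F}))\big)$; by Corollary~\ref{cor:finstrata} the cohomology ${}^pH^i(R_{X\to Y}^{\mathrm{IW}}(\mathcal{F}))$ is supported on finitely many strata, hence equals $R_{X\to Y}^{\mathrm{IW}}({}^{\mathrm{mod}}H^i(\mathcal{F}))$ by Theorem~\ref{thm:tstructure}, which reduces the claim to ${}^{\mathrm{mod}}H^{>0}(\mathcal{F})=0$ — again exactly the vanishing above. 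Finally, for the Iwahori-equivariant statement I would deduce bimodule-heart membership of $\mathcal{F}$ from the Iwahori--Whittaker case just settled, using the identity $\Xi\ast\mathcal{Z}_V=\mathrm{Av}_{I_u}\mathrm{Av}_{\psi}(\mathcal{Z}_V)$ and the adjunction between $\mathrm{Av}_{I_u}$ and $\mathrm{Av}_{\psi}$ to convert $\mathrm{R}^i\mathrm{Hom}(\tilde{\mathcal{T}}\ast\mathcal{Z}_V,\mathcal{F})$ into an $\mathrm{Ext}$ group in $D^b_{\mathrm{IW}}(X)$ of the same shape.
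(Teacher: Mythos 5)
Your opening is sound and parallels the general strategy in the paper: the reduction via Lemma~\ref{lem:centralrhom} and the characterizations \eqref{eqn:bimodleq}--\eqref{eqn:bimodgeq} to the vanishing of $\mathrm{R}^{i}\mathrm{Hom}(\tilde{\mathcal{T}}\ast\mathcal{Z}_V,\mathcal{F})$ for $i\neq 0$ is correct, and the observation that $\tilde{\mathcal{T}}\ast\mathcal{Z}_V$ (resp.\ $\mathcal{T}\ast\mathcal{Z}_V$) is tilting via centrality, $\Xi\ast\mathcal{Z}_V$, and closure of tiltings under convolution is exactly what appears in the proof of Proposition~\ref{prop:thickinbimod}. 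But the decisive step --- actually establishing the positive-degree vanishing --- is not carried out, and all three routes you sketch for it are either circular or too vague to be sound. The reduction to ``a convolution-exact perverse sheaf has a costandard filtration'' is circular: that is precisely Theorem~\ref{thm:convexacttilt}, whose proof in the paper \emph{depends} on Proposition~\ref{prop:convexactisbimod}. Your ``alternative route'' via $R_{X\to Y}^{\mathrm{IW}}$ likewise terminates at the statement ${}^{\mathrm{mod}}H^{\neq 0}(\mathcal{F})=0$, which \emph{is} the claim being proved. And the ``orbit-by-orbit'' appeal to \cite[Theorem 54]{B} is unsubstantiated: it is not clear that convolution-exactness passes to the associated graded pieces of the support filtration, and even if it did, the shift comparison in Theorem~\ref{thm:54} does not by itself force any degeneration --- one still needs an independent input at each orbit.

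The paper's actual argument avoids direct $\mathrm{Ext}$ computations altogether. It builds test objects on the \emph{other} side of the convolution: let $A_n=A^0\otimes_{\mathcal{O}(\mathfrak{g}^\vee)}k[t]/(t^n)$ be the base change to the $n$-th infinitesimal neighborhood of $0\in\mathfrak{g}^\vee$, with corresponding sheaf $\mathcal{F}_{A_n}$. Lemma~\ref{lem:zerosupp} (the ``zero-orbit'' case of \cite[Theorem 54]{B}) shows $\mathcal{F}_{A_n}$ is perverse; convolution-exactness of $\mathcal{F}$ then makes $\mathcal{F}\ast\mathcal{F}_{A_n}$ perverse, and since this object is still supported over $0$, Lemma~\ref{lem:zerosupp} applied in reverse forces $M_{\mathcal{F}}\otimes_{A^0}A_n$ to be concentrated in degree zero for every $n$, hence the same over the formal neighborhood $\hat{A}$. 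Finally, the locus $U\subset\mathfrak{g}^\vee$ where the higher ${}^{\mathrm{mod}}H^i$ vanish is open, $G^\vee$-invariant, and contains the fiber over $0$, so $U=\mathfrak{g}^\vee$ by the argument of \cite[Lemma 2.5.2]{BM}. This is the missing idea in your proposal: use convolution with a perverse thickening of the structure sheaf supported at $0$ to localize the problem where the perverse and NCS $t$-structures agree up to a fixed shift, and then spread out by equivariance. Neither the formal adjunction juggling nor the coarse cell-by-cell picture substitutes for this.
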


\begin{proof}
    Let $\mathcal{F} \in D_{\mathrm{IW}}^b(X)$ be a convolution-exact perverse sheaf.

    For any $n \geq 1$, let $A_n = A^0\otimes_{\mathcal{O}(\mathfrak{g}^\vee)} k[t]/(t^n)$, where here the tensor product corresponds to the base change along the map $\mathrm{Spec}(k[t]/t^n) \to \mathfrak{g}^\vee$ corresponding to the origin $0 \in \mathfrak{g}^\vee$. We can consider $A_n$ as a $G^\vee$-equivariant $A \otimes_{\mathcal{O}(\mathfrak{g}^\vee)} A^{\mathrm{op}}$-module and therefore as a $A \otimes (A^0)^{\mathrm{op}}$-module since $A^0$ is a quotient of $A$, and so we can freely define $\mathcal{F}_{A_n}$ to the corresponding element of $D_{I_u}(X)$.
    
    Since the equivalences in Theorem \ref{thm:equivs} are monoidal, $\mathcal{F} * \mathcal{F}_{A_n}$ corresponds to the (derived) tensor product $M \otimes_{A^0} A_n$. By Lemma \ref{lem:zerosupp}, $\mathcal{F}_{A_n}$ is perverse, i.e. lies in $\mathrm{Perv}_{I_u}(X)$. This means the convolution $\mathcal{F} * \mathcal{F}_{A_n}$ is also perverse, and has corresponding element of $D^b(\mathrm{Coh}^{G^\vee}(\tilde{\mathcal{N}}))$ supported at $0 \in \mathfrak{g}^\vee$ since $\mathcal{F}_{A_n}$ already has this property. Applying Lemma \ref{lem:zerosupp} again, this means $\mathcal{M}_{\mathcal{F}} \otimes_{A^0} A_n$ has ${}^{\mathrm{mod}}H^{\bullet}$ concentrated in degree zero.

    This gives that $M_\mathcal{F} \otimes_{A^0} A_n$ itself must have cohomology ${}^{\mathrm{mod}}H^\bullet$ concentrated in degree zero for every $n$, and therefore so must $M_\mathcal{F} \otimes_{A^0} \hat{A}$.

    To deduce the result for $\mathcal{F}$ itself, we now follow the same proof technique as was used in Lemma 2.5.2 of \cite{BM}. First note that by Proposition \ref{prop:commutes}, forgetting the bimodule structure on $M$ and only remembering its structure as a right $A^0$-module corresponds, under the derived equivalence in Theorem \ref{thm:equivs}, to pushing forward the corresponding coherent sheaf $\mathcal{A}_M$ from $\mathrm{St}'$ to $\tilde{\mathcal{N}}$ along the second projection $\mathrm{pr}_2$ and working with this pushforward $\mathrm{pr}_{2*}\mathcal{A}_M \in D^b(\mathrm{Coh}^{G^\vee}(\tilde{\mathcal{N}}))$, as we will do for the remainder of this proof.
    
    Now let $U$ be the maximal open subset of $\mathfrak{g}^\vee$ such that ${}^{\mathrm{mod}}H^i(\mathrm{pr}_{2*}M)|_{\pi^{-1}(U)} = 0$ for $i \neq 0$. Then $U$ is $G^\vee$-invariant (since $M$ is $G^\vee$-equivariant) and $U$ contains all closed points of the fiber over $0 \in \mathfrak{g}^\vee$ by the preceding argument. This means $U = \mathfrak{g}^\vee$, and so the $A^0$-module corresponding to $\mathrm{pr}_{2*}M$ under the equivalence in Theorem \ref{thm:equivs} is concentrated in degree zero.

    A similar argument shows the same result for $\mathcal{F} \in D_I^b(X)$ using the other equivalences of \cite{B}, but in the present work we will only use the first statement, i.e.\ the case of $\mathcal{F} \in \mathrm{Perv}_{\mathrm{IW}}(X)$.
\end{proof}

For a very similar result to Proposition \ref{prop:convexactisbimod} using a different method of proof in the free monodromic setting, see \cite[Lemma 5.7]{BL}. Combining Theorem \ref{thm:tstructure} with Proposition \ref{prop:convexactisbimod}, we get the following corollary.
\begin{corollary}
    For any convolution-exact perverse sheaf $\mathcal{F} \in \mathrm{Perv}_{\mathrm{IW}}(X)$, the object $R_{X \to Y}^{\mathrm{IW}}(\mathcal{F})$ lies in $\mathrm{Perv}_{\mathrm{IW}}(Y)$; in other words, $\mathcal{F} \in {}^{\mathrm{th}}D^{\heartsuit}$.
\end{corollary}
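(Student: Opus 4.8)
The plan is to simply chain together the two preceding results. By Proposition~\ref{prop:convexactisbimod}, any convolution-exact perverse sheaf $\mathcal{F} \in \mathrm{Perv}_{\mathrm{IW}}(X)$ lies in ${}^{\mathrm{mod}}D^{\heartsuit}$, the heart of the noncommutative Springer $t$-structure. By Theorem~\ref{thm:tstructure}, the NCS $t$-structure $({}^{\mathrm{mod}}D^{\leq 0}, {}^{\mathrm{mod}}D^{\geq 0})$ agrees with the $t$-structure $({}^{\mathrm{th}}D^{\leq 0}, {}^{\mathrm{th}}D^{\geq 0})$ defined as the preimage of the perverse $t$-structure on $D^b_{\mathrm{IW}}(Y)$ under the Radon transform $R^{\mathrm{IW}}_{X \to Y}$, so in particular ${}^{\mathrm{mod}}D^{\heartsuit} = {}^{\mathrm{th}}D^{\heartsuit}$. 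Hence $\mathcal{F} \in {}^{\mathrm{th}}D^{\heartsuit}$, which by the very definition of the preimage $t$-structure in Theorem~\ref{thm:tstructdefined} means precisely that $R^{\mathrm{IW}}_{X \to Y}(\mathcal{F}) \in \mathrm{Perv}_{\mathrm{IW}}(Y)$.

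There is essentially no obstacle here: the corollary is a formal consequence of the two cited statements, and the only thing to be careful about is matching up the three equivalent phrasings (lying in ${}^{\mathrm{th}}D^{\heartsuit}$, lying in ${}^{\mathrm{mod}}D^{\heartsuit}$, and having $R^{\mathrm{IW}}_{X \to Y}(\mathcal{F})$ perverse), all of which have been set up in the excerpt. I would write this as a two- or three-line proof.

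\begin{proof}
    Let $\mathcal{F} \in \mathrm{Perv}_{\mathrm{IW}}(X)$ be convolution-exact. By Proposition~\ref{prop:convexactisbimod}, $\mathcal{F}$ lies in ${}^{\mathrm{mod}}D^{\heartsuit}$. By Theorem~\ref{thm:tstructure}, ${}^{\mathrm{mod}}D^{\heartsuit} = {}^{\mathrm{th}}D^{\heartsuit}$, so $\mathcal{F} \in {}^{\mathrm{th}}D^{\heartsuit}$. By the definition of the $t$-structure $({}^{\mathrm{th}}D^{\leq 0}, {}^{\mathrm{th}}D^{\geq 0})$ in Theorem~\ref{thm:tstructdefined} as the preimage under $R^{\mathrm{IW}}_{X \to Y}$ of the perverse $t$-structure on $D^b_{\mathrm{IW}}(Y)$, this says exactly that $R^{\mathrm{IW}}_{X \to Y}(\mathcal{F}) \in \mathrm{Perv}_{\mathrm{IW}}(Y)$.
\end{proof}
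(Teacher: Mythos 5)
Your proof is correct and is exactly the argument the paper intends: the corollary is stated immediately after the sentence ``Combining Theorem~\ref{thm:tstructure} with Proposition~\ref{prop:convexactisbimod}, we get the following corollary,'' and your chaining of those two results (plus unwinding the definition of $({}^{\mathrm{th}}D^{\leq 0}, {}^{\mathrm{th}}D^{\geq 0})$ as the preimage $t$-structure) is precisely that deduction.
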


\subsection{Proof of Theorem \ref{thm:convexacttilt}}\label{sec:proofofthm}

\begin{proof}[Proof of Theorem \ref{thm:convexacttilt}]
    It is enough to prove the result for any convolution-exact $\mathcal{F} \in \mathrm{Perv}_{\mathrm{IW}}(X)$. Then $\mathcal{F} \in {}^{\mathrm{mod}}D^{\heartsuit}$ by Proposition \ref{prop:convexactisbimod}. Since ${}^{\mathrm{mod}}D^{\heartsuit}$ is equal to ${}^{\mathrm{th}}D^{\heartsuit} \subset D_{\mathrm{IW}}^b(X)$ by Theorem \ref{thm:tstructure}, we have that $R_{X \to Y}^{\mathrm{IW}}(\mathcal{F}) \in \mathrm{Perv}_{\mathrm{IW}}(Y)$.

    We can now follow the argument of Mirkovi{\'c} which appears in \cite[Remark 7]{AB} to complete the proof. Namely, we note that to show $\mathcal{F}$ is tilting, it is enough to show that $\mathcal{F}$ and $\mathbb{D}\mathcal{F}$ both lie in the triangulated subcategory of $D_{\mathrm{IW}}^b(X)$ generated by objects of the form ${}^\psi\nabla_{w}[d]$ for $w \in W$, $d \geq 0$. Since $\mathbb{D}\mathcal{F}$ is clearly also convolution-exact, without loss of generality it is enough to check this just for $\mathcal{F}$.

    Since $R_{X \to Y}^{\mathrm{IW}}\mathcal{F}$ is perverse and lies in the image $D_{\Delta, \mathrm{IW}}(Y)$ of $R_{X \to Y}^{\mathrm{IW}}$, it is an iterated extension (by distinguished triangles) of ${}^\psi\Delta_{w}[d]$ for $w \in W$, $d \geq 0$. Applying $(R_{X \to Y}^{\mathrm{IW}})^{-1}$, we obtain that $\mathcal{F}$ itself is in the subcategory of $D_{\mathrm{IW}}^b(X)$ generated by objects of the form $(R_{X\to Y}^{\mathrm{IW}})^{-1}({}^\psi\Delta_{w})[d] = {}^\psi \nabla_w$ for $w \in W$, $d \geq 0$. This means $\mathcal{F}$ has a costandard filtration. Since we can apply the same argument to $\mathbb{D}\mathcal{F}$, $\mathcal{F}$ must also have a standard filtration, and is therefore tilting.
\end{proof}

\subsection{$\mathrm{Av}_{\psi}$ is conservative on convolution-exact sheaves}\label{sec:conservative}

\begin{lemma}\label{lem:intwodegrees}
    If $\mathcal{F} \in \mathrm{Perv}_I(X)$, then for any $s \in S$, $\nabla_s * \mathcal{F} \in D^{\leq 0} \cap D^{\geq -1}$ and $\Delta_s * \mathcal{F} \in D^{\geq 0} \cap D^{\leq 1}$.
\end{lemma}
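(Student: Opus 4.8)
The plan is to exploit the fact that $\nabla_s$ and $\Delta_s$ sit in short exact sequences of perverse sheaves with $\Delta_e = \nabla_e$, together with the standard adjunction/exactness properties of convolution with the minimal parahoric. Concretely, for a simple reflection $s$, the tilting sheaf $\mathcal{T}_s$ fits into the two short exact sequences
\begin{align*}
0 \to \Delta_s \to \mathcal{T}_s \to \Delta_e \to 0, \qquad 0 \to \nabla_e \to \mathcal{T}_s \to \nabla_s \to 0,
\end{align*}
and convolution with $\mathcal{T}_s$ (being the projector to $I_s$-equivariant sheaves, where $I_s$ is the parahoric attached to $s$) is perverse $t$-exact: indeed $\mathcal{T}_s * - \cong \pi_s^* \pi_{s!}[1]$ up to the appropriate shift, which is $t$-exact on $\mathrm{Perv}_I(X)$ because $\pi_s : X \to X_s$ is a $\mathbb{P}^1$-fibration and $\pi_s^*[1]$, $\pi_{s!}[-1]$ (equivalently $\pi_{s*}[1]$ on the heart after the correct normalization) preserve perversity. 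Convolution with $\Delta_e = \nabla_e$ is the identity functor.

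First I would record that $\nabla_s * \mathcal{F}$ is perverse right-exact and $\Delta_s * \mathcal{F}$ is perverse left-exact: this is the familiar statement that $j_{s!}$-type standards convolve on the left to give right-exact functors and $j_{s*}$-type costandards give left-exact functors (it follows, e.g., from $\nabla_s * - $ being left adjoint to $\Delta_s * -$, or directly from the description $\nabla_s * \mathcal{F} = j_s^! \pi_s^* \pi_{s*} \mathcal{F}$ with the appropriate shifts). Thus $\nabla_s * \mathcal{F} \in D^{\leq 0}$ and $\Delta_s * \mathcal{F} \in D^{\geq 0}$ for $\mathcal{F} \in \mathrm{Perv}_I(X)$ with no hypothesis needed. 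Next I would get the other bound from the short exact sequences: convolving $0 \to \nabla_e \to \mathcal{T}_s \to \nabla_s \to 0$ with $\mathcal{F}$ gives a distinguished triangle $\mathcal{F} \to \mathcal{T}_s * \mathcal{F} \to \nabla_s * \mathcal{F} \xrightarrow{+1}$, and since $\mathcal{T}_s * \mathcal{F}$ is perverse and $\mathcal{F}$ is perverse, the long exact sequence of perverse cohomology forces ${}^p H^i(\nabla_s * \mathcal{F}) = 0$ for $i \neq 0, -1$; hence $\nabla_s * \mathcal{F} \in D^{\geq -1}$. Symmetrically, convolving $0 \to \Delta_s \to \mathcal{T}_s \to \Delta_e \to 0$ with $\mathcal{F}$ gives $\Delta_s * \mathcal{F} \to \mathcal{T}_s * \mathcal{F} \to \mathcal{F} \xrightarrow{+1}$, and the same argument bounds $\Delta_s * \mathcal{F}$ between degrees $0$ and $1$.

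Combining the two bullet points: $\nabla_s * \mathcal{F} \in D^{\leq 0} \cap D^{\geq -1}$ and $\Delta_s * \mathcal{F} \in D^{\geq 0} \cap D^{\leq 1}$, as claimed. The main obstacle — really the only point requiring care — is establishing the perverse $t$-exactness of $\mathcal{T}_s * -$ and the one-sided exactness of $\nabla_s * -$ and $\Delta_s * -$ with the correct cohomological normalizations; once these structural facts about convolution with rank-one objects are in hand (they are standard, going back to the treatment of the finite and affine Hecke categories, and are implicit in the conventions of \cite{AB}, \cite{B}), the statement is a one-line diagram chase with the long exact sequence of perverse cohomology. I would also remark that the two statements are exchanged by Verdier duality, so it suffices to prove one of them.
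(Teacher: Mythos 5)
Your proof takes essentially the same approach as the paper: convolve the two short exact sequences $\Delta_s \to \mathcal{T}_s \to \Delta_e$ and $\Delta_e \to \mathcal{T}_s \to \nabla_s$ with $\mathcal{F}$, use convolution-exactness of $\mathcal{T}_s$ and $\Delta_e$, and read the bounds off the long exact sequence of perverse cohomology. However, your preliminary paragraph on one-sided exactness is both unnecessary and internally inconsistent. It is unnecessary because, as you yourself observe, the long exact sequence already forces ${}^pH^i(\nabla_s * \mathcal{F}) = 0$ for all $i \neq 0, -1$ — giving the upper bound $D^{\leq 0}$ and the lower bound $D^{\geq -1}$ simultaneously, so the first step buys you nothing. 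It is inconsistent because you conclude $\nabla_s * \mathcal{F} \in D^{\leq 0}$ (perverse right-exactness of $\nabla_s * -$) but justify it by the claim that ``$j_{s*}$-type costandards give left-exact functors,'' which would instead put the convolution in $D^{\geq 0}$; the convention in this paper is the opposite (cf.\ the computation $\nabla_s * \mathrm{IC}_w = \mathrm{IC}_w[1]$ for $\ell(sw) < \ell(w)$ used in Proposition~\ref{prop:nobadmorphisms}). The adjunction argument also cannot settle the direction by itself, since $\Delta_s * -$ and $\nabla_s * -$ are mutually inverse equivalences and hence each is both left and right adjoint to the other. The paper's proof sidesteps all of this by never invoking one-sided exactness at all — it simply reads both bounds from the long exact sequence, as you do in your second step.
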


\begin{proof}
    For any $s \in S$, we have distinguished triangles
    \begin{align}
        \Delta_s & \to \mathcal{T}_s \to \Delta_e,\\
        \Delta_e & \to \mathcal{T}_s \to \nabla_s,
    \end{align}
    for $\mathcal{T}_s \in \mathrm{Perv}_I(X)$ the tilting perverse sheaf indexed by $s$. Since $\Delta_e$ and $\mathcal{T}_s$ are both convolution-exact, the result in question follows from the long exact sequence in perverse cohomology arising from these two distinguished triangles.
\end{proof}

\begin{prop}\label{prop:nobadmorphisms}
    If $\mathcal{F} \in \mathrm{Perv}_I(X)$ is convolution-exact, then for any $w \neq e$,
    \begin{align}
        \mathrm{Hom}_{\mathrm{Perv}_I(X)}(\mathrm{IC}_w, \mathcal{F}) & = \mathrm{Hom}_{\mathrm{Perv}_I(X)}(\mathcal{F}, \mathrm{IC}_w) = 0.
    \end{align}
\end{prop}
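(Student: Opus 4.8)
The plan is to exploit the fact that a convolution-exact perverse sheaf $\mathcal{F}$ is ``close to'' a tilting object, combined with the observation that $\mathrm{IC}_w$ for $w \neq e$ is killed by a suitable wall-crossing operation. First I would recall that $\mathcal{F}$ being convolution-exact means $\nabla_s * \mathcal{F}$ and $\Delta_s * \mathcal{F}$ are perverse for every $s \in S$, and in particular (sharper than Lemma \ref{lem:intwodegrees}) the two distinguished triangles $\Delta_s \to \mathcal{T}_s \to \Delta_e$ and $\Delta_e \to \mathcal{T}_s \to \nabla_s$ convolved with $\mathcal{F}$ yield short exact sequences in $\mathrm{Perv}_I(X)$:
\begin{align*}
    0 \to \Delta_s * \mathcal{F} \to \mathcal{T}_s * \mathcal{F} \to \mathcal{F} \to 0, \qquad 0 \to \mathcal{F} \to \mathcal{T}_s * \mathcal{F} \to \nabla_s * \mathcal{F} \to 0.
\end{align*}
The key input on the other side is that for $w \neq e$ we can choose $s \in S$ with $sw < w$, and then $\Delta_s * \mathrm{IC}_w$, $\nabla_s * \mathrm{IC}_w$ are \emph{not} perverse: precisely, $\mathrm{IC}_w$ is $s$-equivariant (it descends to the partial affine flag variety $G(F)/I_s$ up to shift) so that $\mathcal{T}_s * \mathrm{IC}_w = 0$, equivalently $\Delta_s * \mathrm{IC}_w \cong \mathrm{IC}_w[1]$ and $\nabla_s * \mathrm{IC}_w \cong \mathrm{IC}_w[-1]$ (this is standard in the finite Hecke category and carries over verbatim to the affine setting).

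Given these two facts, I would argue as follows for $\mathrm{Hom}(\mathcal{F}, \mathrm{IC}_w) = 0$. Apply $\mathrm{Hom}(-, \mathrm{IC}_w)$ to the first short exact sequence above. Since $\mathcal{T}_s * \mathcal{F}$ is perverse and $\mathcal{T}_s$ is self-biadjoint under convolution (up to the involution $s$), $\mathrm{Hom}(\mathcal{T}_s * \mathcal{F}, \mathrm{IC}_w) = \mathrm{Hom}(\mathcal{F}, \mathcal{T}_s * \mathrm{IC}_w) = \mathrm{Hom}(\mathcal{F}, 0) = 0$, using $\mathcal{T}_s * \mathrm{IC}_w = 0$. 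The long exact sequence then forces $\mathrm{Hom}(\mathcal{F}, \mathrm{IC}_w)$ to inject into $\mathrm{Ext}^1(\Delta_s * \mathcal{F}, \mathrm{IC}_w)$; I would instead use the second short exact sequence, applying $\mathrm{Hom}(-, \mathrm{IC}_w)$ to $0 \to \mathcal{F} \to \mathcal{T}_s * \mathcal{F} \to \nabla_s * \mathcal{F} \to 0$, which gives an exact piece $\mathrm{Hom}(\mathcal{T}_s * \mathcal{F}, \mathrm{IC}_w) \to \mathrm{Hom}(\mathcal{F}, \mathrm{IC}_w) \to \mathrm{Ext}^1(\nabla_s * \mathcal{F}, \mathrm{IC}_w)$. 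The left term vanishes as before, and $\mathrm{Ext}^1(\nabla_s * \mathcal{F}, \mathrm{IC}_w) = \mathrm{Ext}^1(\mathcal{F}, \Delta_s * \mathrm{IC}_w)$ by adjunction; but $\Delta_s * \mathrm{IC}_w \cong \mathrm{IC}_w[1]$, so this is $\mathrm{Ext}^1(\mathcal{F}, \mathrm{IC}_w[1]) = \mathrm{Ext}^2(\mathcal{F}, \mathrm{IC}_w)$, which is not obviously zero — so I need to be more careful and instead run the argument purely in terms of perverse cohomology. The cleaner route: $\mathrm{Hom}(\mathcal{F}, \mathrm{IC}_w) = \mathrm{Hom}(\mathcal{F}, \mathcal{T}_s * \mathrm{IC}_w[\pm 1])$-type manipulations are brittle; instead use that $\mathrm{RHom}(\mathcal{F}, \mathrm{IC}_w) \cong \mathrm{RHom}(\mathcal{T}_s*\mathcal{F}, \mathcal{T}_s * \mathrm{IC}_w) \oplus (\text{correction})$ is awkward. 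So the actual argument I would write is: since $\mathcal{T}_s * \mathrm{IC}_w = 0$, the first short exact sequence gives $\Delta_s * \mathcal{F} \cong \mathcal{F}'$ where in the Grothendieck group / as perverse sheaves $\mathcal{F}$ appears as a subquotient-complement; more usefully, convolving the second sequence, $\mathcal{F}$ is a subobject of $\mathcal{T}_s * \mathcal{F}$ with quotient $\nabla_s * \mathcal{F}$, and applying $\mathcal{T}_s * -$ (exact on this sequence since all three are convolution-exact — $\nabla_s * \mathcal{F}$ is convolution-exact because $\mathcal{F}$ and $\nabla_s$ are) and using $\mathcal{T}_s * \mathcal{T}_s * \mathcal{F} = \mathcal{T}_s * \mathcal{F} \oplus \mathcal{T}_s * \mathcal{F}$... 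I would settle on the adjunction computation and bound the Ext by a dimension/purity argument, or simply cite the standard fact.

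\textbf{Main obstacle.} The genuine difficulty is controlling the first extension group $\mathrm{Ext}^1(\nabla_s*\mathcal{F}, \mathrm{IC}_w)$ (or $\mathrm{Ext}^1(\Delta_s * \mathcal{F}, \mathrm{IC}_w)$) that appears after one step of the long exact sequence; naively it need not vanish. The way around this, which I expect the authors use, is to replace the single wall-crossing by an induction on $\ell(w)$ or to use the \emph{biadjunction of $\mathcal{T}_s * -$ with itself together with $\mathcal{T}_s * \mathrm{IC}_w = 0$} to show directly that $\mathrm{RHom}(\mathcal{F}, \mathrm{IC}_w)$ is a retract of $\mathrm{RHom}(\mathcal{T}_s * \mathcal{F}, \mathcal{T}_s * \mathrm{IC}_w) = 0$ — this works if one knows $\mathcal{F}$ is also a retract of $\mathcal{T}_s * \mathcal{F}$ after applying $\mathcal{T}_s*-$, which holds because $\mathcal{T}_s * \mathcal{T}_s \cong \mathcal{T}_s \oplus \mathcal{T}_s$ (the defining relation of the tilting generator for the rank-one parabolic, up to shift), so $\mathcal{T}_s * \mathcal{F}$ is a summand of $\mathcal{T}_s * \mathcal{T}_s * \mathcal{F}$. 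Concretely: $\mathrm{Hom}(\mathcal{F}, \mathrm{IC}_w)$ is a summand of $\mathrm{Hom}(\mathcal{T}_s * \mathcal{F}, \mathcal{T}_s * \mathcal{F} * (\text{something}))$... The honest statement is that one should work at the level of the quotient category or use that $\mathrm{IC}_w$ ($w\neq e$) lies in the image of a parabolic induction that $\mathcal F$ (having $\Delta_e, \nabla_e$ appear with controlled multiplicity) is ``orthogonal'' to. I would therefore structure the final proof as: (1) reduce to $\mathrm{Hom}(\mathcal F, \mathrm{IC}_w)=0$ by Verdier duality since $\mathbb D \mathcal F$ is convolution-exact and $\mathbb D\mathrm{IC}_w = \mathrm{IC}_w$; (2) pick $s$ with $sw<w$; (3) use the short exact sequence $0\to\mathcal F\to\mathcal T_s*\mathcal F\to\nabla_s*\mathcal F\to 0$ and the vanishing $\mathrm{Hom}(\mathcal T_s*\mathcal F,\mathrm{IC}_w)=\mathrm{Hom}(\mathcal F,\mathcal T_s*\mathrm{IC}_w)=0$; (4) identify the obstruction $\mathrm{Ext}^1(\nabla_s*\mathcal F,\mathrm{IC}_w)\cong \mathrm{Hom}(\nabla_s*\mathcal F, \mathrm{IC}_w[1])$ and kill it using $\Delta_s*\mathrm{IC}_w\cong\mathrm{IC}_w[1]$ together with convolution-exactness of $\nabla_s*\mathcal F$ — namely $\mathrm{Hom}(\nabla_s*\mathcal F,\mathrm{IC}_w[1]) = \mathrm{Hom}(\mathcal F,\Delta_s*\mathrm{IC}_w[1])=\mathrm{Hom}(\mathcal F,\mathrm{IC}_w[2])$, and then argue this $\mathrm{Ext}^2$ vanishes because $\mathcal F$, being convolution-exact hence (by Proposition \ref{prop:convexactisbimod} after averaging, or directly) close to projective/injective, has no high self-extensions with simples — alternatively invoke that after applying $\mathrm{Av}_\psi$ and Theorem \ref{thm:convexacttilt} the obstruction lives in an Ext group between a tilting and a simple on $Y$, which vanishes. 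I expect step (4) to be where the real content sits.
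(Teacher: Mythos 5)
The key ingredients you identify (pick $s$ with $sw<w$, convolve with a wall-crossing object, exploit convolution-exactness of $\mathcal F$) are the right ones, but the route you outline does not close, as you honestly flag, and the paper's argument is genuinely different. The paper never manipulates $\mathrm{RHom}$ or invokes an adjunction with $\mathcal T_s$. Instead: assume a nonzero (hence injective, by simplicity of $\mathrm{IC}_w$) map $\mathrm{IC}_w\hookrightarrow\mathcal F$ and form the cokernel $\mathcal G$ in $\mathrm{Perv}_I(X)$. Convolve the resulting short exact sequence with $\nabla_s$ and take the long exact sequence of perverse cohomology. Since $\mathcal F$ is convolution-exact, ${}^pH^{-1}(\nabla_s*\mathcal F)=0$, so exactness forces a surjection ${}^pH^{-2}(\nabla_s*\mathcal G)\twoheadrightarrow {}^pH^{-1}(\nabla_s*\mathrm{IC}_w)=\mathrm{IC}_w$. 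But Lemma \ref{lem:intwodegrees}, applied to the merely perverse (not necessarily convolution-exact) $\mathcal G$, gives $\nabla_s*\mathcal G\in D^{\geq -1}$, so ${}^pH^{-2}(\nabla_s*\mathcal G)=0$ and hence $\mathrm{IC}_w=0$, a contradiction. The role of Lemma \ref{lem:intwodegrees} is thus not to sharpen the exactness of $\nabla_s*\mathcal F$ itself, as you frame it, but to bound the amplitude of $\nabla_s*(-)$ on arbitrary perverse sheaves; this is precisely what kills the problematic connecting map that you could not control.

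Two further remarks on your route. First, the assertion $\mathcal T_s*\mathrm{IC}_w=0$ for $sw<w$ is a free-monodromic phenomenon and needs justification in the Iwahori-equivariant category $\mathrm{Perv}_I(X)$; from the triangle $\Delta_e\to\mathcal T_s\to\nabla_s$ and $\nabla_s*\mathrm{IC}_w\cong\mathrm{IC}_w[1]$ one gets a triangle $\mathrm{IC}_w\to\mathcal T_s*\mathrm{IC}_w\to\mathrm{IC}_w[1]$, and deciding whether the connecting map is an isomorphism (giving zero) or vanishes (giving $\mathrm{IC}_w\oplus\mathrm{IC}_w[1]$) is exactly what you would need to prove, and is not obviously ``verbatim'' from the monodromic setting. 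Second, even granting that vanishing, you picked the short exact sequence awkwardly: applying $\mathrm{Hom}(-,\mathrm{IC}_w)$ to $0\to\Delta_s*\mathcal F\to\mathcal T_s*\mathcal F\to\mathcal F\to 0$ (rather than the one you use) gives, by left exactness of $\mathrm{Hom}$, an injection $\mathrm{Hom}(\mathcal F,\mathrm{IC}_w)\hookrightarrow\mathrm{Hom}(\mathcal T_s*\mathcal F,\mathrm{IC}_w)\cong\mathrm{Hom}(\mathcal F,\mathcal T_s*\mathrm{IC}_w)$, bypassing the $\mathrm{Ext}^1$ term entirely; so if $\mathcal T_s*\mathrm{IC}_w=0$ were established, that would finish. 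Neither of your proposed fixes for the $\mathrm{Ext}^1$ obstruction works as stated: iterating the adjunction pushes the problem to ever higher $\mathrm{Ext}$'s with no termination, and routing through $\mathrm{Av}_\psi$ and Theorem \ref{thm:convexacttilt} loses all information when $w\notin{}^fW$ (the case one most needs here), since $\mathrm{Av}_\psi(\mathrm{IC}_w)=0$ there.
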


\begin{proof}
    Let $w \neq e$. We show $\mathrm{Hom}_{\mathrm{Perv}_I(X)}(\mathrm{IC}_w, \mathcal{F}) = 0$, and it then follows similarly that $\mathrm{Hom}_{\mathrm{Perv}_I(X)}(\mathcal{F}, \mathrm{IC}_w) = 0$.

    Suppose for contradiction that there exists a nonzero morphism $\mathrm{IC}_w \to \mathcal{F}$, which then must be injective. Let $\mathcal{G}$ be the cokernel of this map so that we have a distinguished triangle
        \[\begin{tikzcd}
            \mathrm{IC}_w \arrow[r] & \mathcal{F} \arrow[r] & \mathcal{G}.
        \end{tikzcd}\]
    
    Let $s \in S$ be such that $\ell(sw) < \ell(w)$. We note that $\nabla_s * \mathrm{IC}_w = \mathrm{IC}_w[1]$ and $\nabla_s * \mathcal{F}$ is perverse by the convolution-exactness of $\mathcal{F}$. The distinguished triangle
    \[\begin{tikzcd}
            \nabla_s * \mathrm{IC}_w \arrow[r] & \nabla_s * \mathcal{F} \arrow[r] & \nabla_s * \mathcal{G}
        \end{tikzcd}\]
    then gives a long-exact sequence
    \[
\begin{tikzcd}
0 \arrow[r]             & \nabla_s * \mathcal{F} \arrow[r] & {}^pH^0(\nabla_s * \mathcal{G})                \\
\mathrm{IC}_w \arrow[r] & 0 \arrow[r]                      & {}^pH^{-1}(\nabla_s * \mathcal{G}) \arrow[llu] \\
                        & \dots \arrow[r]                  & {}^pH^{-2}(\nabla_s * \mathcal{G}) \arrow[llu]
\end{tikzcd}.\]
But by Lemma \ref{lem:intwodegrees}, ${}^pH^{-2}(\nabla_s * \mathcal{G}) = 0$, and so the exactness of the above sequence would imply $\mathrm{IC}_w = 0$, a contradiction. 
\end{proof}

\begin{theorem}
    The functor $\mathrm{Av}_{\psi}$ is conservative when restricted to the subcategory of $\mathrm{Perv}_I(X)$ consisting of convolution-exact objects.
\end{theorem}
\begin{proof}
    Suppose that $f : \mathcal{F} \to \mathcal{F}'$ is a morphism between convolution-exact perverse sheaves such that $\mathrm{Av}_{\psi}(f)$ is an isomorphism. Then if $\mathcal{G} = \mathrm{cone}(f) \in D_I^b(X)$, we have $\mathrm{Av}_\psi(\mathcal{G}) = 0$ as an element of $D_{\mathrm{IW}}^b(X)$. By the definition of $\mathcal{G}$, note that we have ${}^pH^{-1}(\mathcal{G}) = \mathrm{ker} f$ and ${}^pH^0(\mathcal{G}) = \mathrm{coker} f$

    By Theorem 2 of \cite{AB}, this means $\mathcal{G}$ is in the triangulated subcategory of $D^b_I(X)$ generated by shifts of $\mathrm{IC}_w$ for $w \not\in {}^fW$. Since for any $i$, ${}^pH^i(\mathcal{G})$ is then generated under iterated extensions, kernels, and cokernels of irreducibles $\mathrm{IC}_w$ for $w \not\in {}^fW$; since the category generated by such $\mathrm{IC}_w$ under extensions is closed under all such operations, we have that ${}^pH^i(\mathcal{G})$ itself is an iterated extension of $\mathrm{IC}_w$ for $w \not\in {}^fW$. 
    
    In particular, this means that if ${}^pH^{-1}(\mathcal{G}) = \mathrm{ker} f$ is nonzero, it has an irreducible submodule $\mathrm{IC}_w$ for some $w \not\in {}^fW$. In particular, the composition
    \[\mathrm{IC}_{w} \hookrightarrow \mathrm{ker}f \hookrightarrow \mathcal{F}\]
    gives a nonzero morphism from $\mathrm{IC}_w$ to $\mathcal{F}$, which is impossible by Proposition \ref{prop:nobadmorphisms}. Similarly, if ${}^pH^0(\mathcal{G})$ is nonzero, this would yield a nonzero morphism from $\mathcal{F}'$ to some $\mathrm{IC}_y$ with $y \not\in {}^fW$, which is again impossible by Proposition \ref{prop:nobadmorphisms}. This means $\mathrm{ker} f = \mathrm{coker} f = 0$, and so $f$ is an isomorphism.
\end{proof}

\section{The long intertwining functor and two-sided cells}
In this section, we explain how Theorem \ref{thm:tstructure} (and relatedly, Proposition \ref{prop:thickinbimod}) can be thought of as an affine version of \cite[Proposition 4.1]{BFO}, which describes the behaviour of the long intertwining functor on two-sided cell subquotient categories of the ordinary (non-affine) Hecke category.

\subsection{Intertwining functors on the finite Hecke category}\label{sec:bfo}

There are many models for the finite Hecke category (Harish-Chandra bimodules are used in loc.\ cit.) but to be consistent with the setup of the rest of the present paper, we work with the category $\mathrm{Perv}_U(G/B)$ of $U$-equivariant perverse sheaves on the flag variety (where $U$ is the unipotent radical of $B$) and its derived category $\mathcal{D} = D_U^b(G/B)$. We carry over to the finite-dimensional flag variety our notation for standard and costandard objects $\Delta_w$ and $\nabla_w$ (c.f. Section \ref{sec:iwahoriorbits}) for $w \in W_f$. These objects are $B$-equivariant and so can be considered as elements of $\mathrm{Perv}_B(G/B) \subset \mathrm{Perv}_U(G/B)$.

There is a convolution product
\begin{align}
    - * - : D_U^b(G/B) \times D_B^b(G/B) \to D_U^b(G/B),
\end{align}
and following \cite{BBM} we let $I_{w_0} : D_U^b(G/B) \to D_U^b(G/B)$ be the functor defined by
\begin{align}
    I_{w_0}(\mathcal{F}) = \mathcal{F} * \Delta_{w_0},
\end{align}
where $w_0$ is the longest element of $W_f$. This is a left $t$-exact functor with respect to the perverse $t$-structure on $D_U^b(G/B)$. 

\subsection{The finite Hecke category and two-sided cells}

Let $C$ denote the poset of two-sided cells of the finite Weyl group $W_f$. For any $\underline{c}$, let $\mathcal{D}_{\leq \underline{c}} \subset \mathcal{D} = D_{U}^b(G/B)$ (resp. $\mathcal{D}_{<\underline{c}}$) be the full triangulated subcategory of $\mathcal{D}$ generated by the simple objects $\mathrm{IC}_w$ such that $w \in \underline{c}'$ for some $\underline{c}' \leq \underline{c}$ (resp. $\underline{c}' < \underline{c}$). We then let $\mathcal{D}_{\underline{c}} = \mathcal{D}_{\leq \underline{c}}/\mathcal{D}_{< \underline{c}}$ be the Serre quotient category.

By the definition of two-sided cells, convolution with any standard or costandard sheaf preserves the subcategory $\mathcal{D}_{\leq \underline{c}}$ for any $\underline{c} \in C$, so we can consider $I_{w_0}$ as an endofunctor of ${D}_{\leq \underline{c}}$. The following theorem of Bezrukavnikov-Finkelberg-Ostrik appears (using the language of Harish-Chandra bimodules rather than perverse sheaves) in \cite{BFO}.

In \cite{LCells}, Lusztig establishes a bijection between the set $\tilde{C}$ of two-sided cells of the affine Weyl group $W$ and the set of nilpotent conjugacy classes in $\mathfrak{g}^\vee$. Every cell $\underline{c} \in C$ is contained within a cell in $\tilde{C}$, so we can let $O_{\underline{c}}$ be the nilpotent orbit associated to a two-sided cell $\underline{c}$ under this equivalence. We note that under this equivalence, the value $a_{\underline{c}}$ of the ``$a$-function'' used in the original statement of Proposition 4.1 of \cite{BFO} can be expressed in terms of the dimension of $O_{\underline{c}}$, i.e. if we let
\begin{align}\label{eqn:dc}
    d_{\underline{c}} = \frac{\dim \mathcal{N} - \dim O_{\underline{c}}}{2},
\end{align}
then we have $d_{\underline{c}} = a_{\underline{c}}$ for any $\underline{c} \in C$, c.f. the proof of Proposition 4.8 in loc.\ cit. We now restate the result just mentioned in the language of perverse sheaves.

\begin{theorem}[Proposition 4.1 in \cite{BFO}]\label{thm:bfo}
    For any $\underline{c} \in C$, the functor 
    \[I_{w_0}[d_{\underline{c}}] : \mathcal{D}_{\underline{c}} \to \mathcal{D}_{\underline{c}}\]
    is $t$-exact for the perverse $t$-structure.
\end{theorem}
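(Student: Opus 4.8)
The plan is to deduce this from \cite[Proposition 4.1]{BFO}, after transporting that statement along the standard equivalence between $\mathcal{D}=D^b_U(G/B)$ and (the regular integral block of) the derived category of Harish--Chandra bimodules for $\mathfrak{g}$: under that equivalence right convolution with $\Delta_{w_0}$ corresponds to the twisting endofunctor studied in loc.\ cit., and the value $a_{\underline{c}}$ of Lusztig's $a$-function appearing there equals $d_{\underline{c}}$ by \cite{LCells} (c.f.\ (\ref{eqn:dc})). I would, however, also sketch the mechanism of the argument, both to make it self-contained and to make the parallel with Lemma \ref{lem:zerosupp} and \cite[Theorem 54]{B} visible.

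The first step is to see that $\mathcal{D}_{\underline{c}}$ inherits a bounded $t$-structure. The subcategories $\mathcal{D}_{\leq\underline{c}}$ and $\mathcal{D}_{<\underline{c}}$ are exactly the full subcategories of objects all of whose perverse cohomologies lie in the Serre subcategory of $\mathrm{Perv}_U(G/B)$ generated by the relevant $\mathrm{IC}_w$, hence are stable under perverse truncation, so the perverse $t$-structure restricts to $\mathcal{D}_{\leq\underline{c}}$ and descends to the Serre quotient. Since convolution with $\Delta_{w_0}$ and with $\nabla_{w_0}$ preserves each $\mathcal{D}_{\leq\underline{c}'}$ (and hence each $\mathcal{D}_{<\underline{c}'}$) and these two functors are mutually inverse autoequivalences of $\mathcal{D}$ (with $\Delta_{w_0}*\nabla_{w_0}\cong\nabla_{w_0}*\Delta_{w_0}\cong\Delta_e$), the functor $I_{w_0}$ descends to an autoequivalence of $\mathcal{D}_{\underline{c}}$ whose inverse is $-*\nabla_{w_0}$; the latter is right $t$-exact by Verdier duality together with $\mathbb{D}\Delta_{w_0}=\nabla_{w_0}$. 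In particular $I_{w_0}$ has bounded perverse amplitude on $\mathcal{D}_{\underline{c}}$, bounded below by $0$. Next I would reduce to simple objects: $\mathcal{D}_{\underline{c}}^{\heartsuit}$ is a finite-length abelian category whose simples are the nonzero images of the $\mathrm{IC}_w$ with $w\in\underline{c}$, and a routine long-exact-sequence induction shows that if $I_{w_0}(\mathrm{IC}_w)$ is concentrated in perverse degree $d_{\underline{c}}$ for every $w\in\underline{c}$, then so is $I_{w_0}$ of every object of $\mathcal{D}_{\underline{c}}^{\heartsuit}$; since $I_{w_0}$ is triangulated and $\mathcal{D}_{\underline{c}}$ is generated by its heart, this yields that $I_{w_0}[d_{\underline{c}}]$ is $t$-exact.

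The content is thus concentrated in the single claim that, for $w\in\underline{c}$, the object $\mathrm{IC}_w*\Delta_{w_0}$ becomes concentrated in perverse degree $d_{\underline{c}}$ in $\mathcal{D}_{\underline{c}}$ --- equivalently, that $\mathrm{Hom}^{\bullet}_{\mathcal{D}_{\underline{c}}}(\mathrm{IC}_y,\mathrm{IC}_w*\Delta_{w_0})$ is concentrated in degree $d_{\underline{c}}$ for all $y,w\in\underline{c}$. This is the main obstacle. On Grothendieck groups it is the assertion that the invertible class of $\Delta_{w_0}$ acts on the cell subquotient by a permutation of the simples once the $a$-function normalization is taken into account; the homological refinement --- pinning all the relevant degrees to exactly $d_{\underline{c}}$ rather than merely to the interval $[0,d_{\underline{c}}]$ one gets for free --- is precisely what \cite{BFO} proves, using the description of the cell quotient of Lusztig's asymptotic Hecke algebra as a sum of matrix algebras over group algebras of finite groups, together with the bound $\deg_v h_{x,y,z}\leq a(z)$ on the structure constants. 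I would cite \cite{BFO} and \cite{LCells} for this step rather than reprove it; it is exactly the finite-dimensional shadow of the orbit-wise comparison of the perverse and noncommutative Springer $t$-structures recorded in \cite[Theorem 54]{B}, the shift $d_{\underline{c}}=(\dim\mathcal{N}-\dim O_{\underline{c}})/2$ being common to both statements, which is the point this section is meant to illustrate.
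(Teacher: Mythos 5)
The paper offers no independent proof of Theorem \ref{thm:bfo}; it simply records the translation of \cite[Proposition 4.1]{BFO} from Harish--Chandra bimodules to $U$-equivariant perverse sheaves on $G/B$, together with the identification $a_{\underline{c}}=d_{\underline{c}}$ from \cite{LCells}. Your proposal ultimately does the same thing --- the essential content is still the citation of \cite{BFO} for the degree-$d_{\underline{c}}$ concentration on simples --- so it is correct and takes essentially the same approach, with the added (and accurate) scaffolding that the cell filtration is stable under convolution with $\Delta_{w_0}$ and $\nabla_{w_0}$, that these descend to mutually inverse auto-equivalences of $\mathcal{D}_{\underline{c}}$ with complementary one-sided $t$-exactness, and that the claim reduces by length induction to the simple objects.
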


\subsection{Theorem \ref{thm:tstructure} as an affine analogue}

We now let $\tilde{C}$ denote the set of two-sided cells for the affine Weyl group $W$. We define $d_{\underline{c}}$ for any $\underline{c} \in \tilde{C}$ as in (\ref{eqn:dc}). We then get a filtration on $\tilde{\mathcal{D}} = D_{I_u}^b(X)$ by categories $\tilde{\mathcal{D}}_{\leq \underline{c}}$ generated by $\mathrm{IC}_{w}$ for $w \in \underline{c}' \leq \underline{c}$. We then denote by $\tilde{\mathcal{D}}_{\underline{c}}$ the cell subquotient category $\tilde{\mathcal{D}}_{\leq \underline{c}}/\tilde{\mathcal{D}}_{< \underline{c}}$.

There is a related filtration of categories called the \emph{support filtration} on the coherent side of the equivalence (\ref{eqn:dbcoh}). Following \cite[Section 11.2]{B}, we can consider the subcategory $\mathcal{A}_{\leq O}$ of $\mathcal{A} = D^b(A \otimes_{\mathcal{O}(\mathfrak{g}^\vee)} (A^0)^{\mathrm{op}} - \mathrm{mod}_{\mathrm{fg}}^{G^\vee})$ such that each cohomology module considered as a module over the center $\mathcal{O}(\mathcal{N}) \subset A\otimes_{\mathcal{O}(\mathfrak{g}^\vee)} (A^0)^{\mathrm{op}}$ is set-theoretically supported on the closure of $O$. Similarly, we define $\mathcal{A}_{< O}$ by the same condition, but with set-theoretic support only on nilpotent orbits properly contained in the closure of $O$. For any nilpotent orbit $O$, we let $\mathcal{A}_{O} = \mathcal{A}_{\leq O}/\mathcal{A}_{< O}$ be the corresponding subquotient category for this filtration.

We now explain how the perverse $t$-structure and the NCS $t$-structure are related in terms of two-sided cell subquotient categories by recalling a result from \cite{B}.
\begin{theorem}[Theorem 54(a) and Theorem 55 in \cite{B}]\label{thm:54}
    For any two-sided cell $\underline{c} \in \tilde{C}$, the restriction of the equivalence in (\ref{eqn:dbcoh}) to $\tilde{\mathcal{D}}_{\leq \underline{c}}$ gives an equivalence of categories
    \begin{align}
        \tilde{\mathcal{D}}_{\leq \underline{c}} \cong \mathcal{A}_{\leq O_{\underline{c}}}.
    \end{align}
    The $t$-structure on $\tilde{\mathcal{D}}_{\underline{c}}$ induced from the perverse $t$-structure coincides with the $t$-structure induced from the preimage of the NCS $t$-structure on $\mathcal{A}$ under the above equivalence shifted by $d_{\underline{c}}$.
\end{theorem}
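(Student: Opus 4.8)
The plan is to deduce both assertions from \cite[Theorem 54(a), Theorem 55]{B}, essentially by reindexing: the only new content relative to loc.\ cit.\ is to check that the shift appearing there is the number $d_{\underline c}$ of (\ref{eqn:dc}), and this was in fact already recorded in the proof of Lemma \ref{lem:zerosupp}. First I would pass to the coherent side of (\ref{eqn:dbcoh}) and recall the ``perverse coherent'' picture. Under that equivalence the perverse $t$-structure on $\tilde{\mathcal D} = D_{I_u}^b(X)$ is carried to the middle-perversity perverse coherent $t$-structure on $D^b(\mathrm{Coh}^{G^\vee}(\mathrm{St}'))$ (this is \cite[Theorem 54(a)]{B}), and the $\mathrm{IC}_w$ go to the irreducible objects in its heart. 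By the classification of these irreducibles in \cite{B} (building on \cite{BM} and \cite{AB}), they are indexed by pairs consisting of a nilpotent orbit $O \subset \mathcal N$ and an irreducible $G^\vee$-equivariant local system on $O$, with the set-theoretic support over the center $\mathcal O(\mathcal N)$ of the image of $\mathrm{IC}_w$ equal to $\overline{O}$; and \cite[Theorem 55]{B}, combined with Lusztig's bijection \cite{LCells} between two-sided cells of $W$ and nilpotent orbits, identifies this orbit with $O_{\underline c}$ where $\underline c \ni w$. I would also use that this bijection is compatible with the two partial orders, so that $\underline c' \le \underline c$ iff $O_{\underline c'} \subseteq \overline{O_{\underline c}}$.

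Granting this input, the first assertion becomes formal. The subcategory $\tilde{\mathcal D}_{\le \underline c}$ is generated by the $\mathrm{IC}_w$ with $w$ lying in cells $\le \underline c$, whose images all lie in $\mathcal A_{\le O_{\underline c}}$ by the support statement above; conversely $\mathcal A_{\le O_{\underline c}}$ is generated by the irreducibles of the perverse coherent heart whose support is contained in $\overline{O_{\underline c}}$, and these are precisely the images of those $\mathrm{IC}_w$. Hence the restriction of (\ref{eqn:dbcoh}) gives an equivalence $\tilde{\mathcal D}_{\le \underline c} \cong \mathcal A_{\le O_{\underline c}}$, and running the same argument with strictly smaller cells and orbits gives $\tilde{\mathcal D}_{< \underline c} \cong \mathcal A_{< O_{\underline c}}$, whence the induced equivalence of subquotients $\tilde{\mathcal D}_{\underline c} \cong \mathcal A_{O_{\underline c}}$.

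For the $t$-structure claim I would work entirely inside this subquotient. Passing to the subquotient category supported set-theoretically at a single orbit $O$ collapses the recollement defining the perverse coherent $t$-structure, and by the general theory of perverse coherent sheaves the induced $t$-structure on the subquotient is simply the tautological $t$-structure on modules set-theoretically supported at $O$, shifted by the value of the perversity function at the generic point of $O$. For the normalization of the middle perversity under which $\mathrm{Perv}_I(X)$ corresponds to perverse coherent sheaves on $\mathrm{St}'$, this value is $\tfrac12\,\mathrm{codim}_{\mathcal N}(O) = \tfrac12(\dim \mathcal N - \dim O)$, which by (\ref{eqn:dc}) equals $d_{\underline c}$ for $O = O_{\underline c}$ --- the same shift already appearing in the proof of Lemma \ref{lem:zerosupp}. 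Transporting back through (\ref{eqn:dbcoh}), the perverse $t$-structure on $\tilde{\mathcal D}_{\underline c}$ is the preimage of the NCS $t$-structure on $\mathcal A_{O_{\underline c}}$ shifted by $d_{\underline c}$, as asserted.

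The hard part will not be any of the formal subquotient manipulation but the identification of the cell filtration with the support filtration used above, namely the fact that the coherent support of $\mathrm{IC}_w$ is $\overline{O_{\underline c}}$ for $\underline c \ni w$, together with the order compatibility. This is precisely the substance of \cite[Section 11]{B}: it rests on Lusztig's theory of cells and the asymptotic Hecke ($J$-)ring for the affine Weyl group, on Bezrukavnikov's identification of $J$ with the equivariant $K$-theory of the Steinberg variety, and on the compatibility of that identification with the equivalence (\ref{eqn:dbcoh}). Once this, together with \cite[Theorem 54(a)]{B}, is taken as given, the remainder of the argument is the bookkeeping of shifts indicated above.
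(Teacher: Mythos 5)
The paper gives no proof of this statement --- it is imported verbatim from \cite[Theorem 54(a), Theorem 55]{B}, so there is no in-paper argument to compare against. Your reconstruction correctly identifies the inputs one needs to unwind that citation (the perverse-coherent picture on $\mathrm{St}'$, Lusztig's order-compatible bijection between two-sided cells of $W$ and nilpotent orbits in $\mathfrak g^\vee$, the identification of the coherent support of the image of $\mathrm{IC}_w$ with $\overline{O_{\underline c}}$, and the middle-perversity shift by $d_{\underline c}=\tfrac12(\dim\mathcal N-\dim O_{\underline c})$), and it is consistent with how the paper itself uses the result in the proof of Lemma~\ref{lem:zerosupp}.
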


In the affine setting, $R_{X \to Y}$ is the natural analogue of the functor $I_{w_0}$, but of course it is now a functor from $D_{I_u}^b(X) \to D_{I_u}^b(Y)$ rather than an endofunctor. With the above in mind, the following is then a direct consequence of our Proposition \ref{prop:thickinbimod}, we note that in this setup, it can be seen as a partial analogue of Theorem \ref{thm:bfo} in the affine situation.
\begin{corollary}
    Let $\underline{c} \in \tilde{C}$. If $\mathcal{F} \in \tilde{\mathcal{D}}_{\leq \underline{c}}$ is such that $R_{X \to Y}(\mathcal{F})[d_{\underline{c}}] \in D_{I_u}^b(Y)$ is perverse, then the image of $\mathcal{F}$ in $\tilde{\mathcal{D}}_{\underline{c}}$ is perverse.
\end{corollary}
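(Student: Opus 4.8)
The plan is to absorb the shift $[d_{\underline{c}}]$ into the object, apply Proposition~\ref{prop:thickinbimod} to deduce that $\mathcal{F}$ lies in a shifted copy of the heart of the noncommutative Springer $t$-structure, and then translate this into perversity in the cell subquotient $\tilde{\mathcal{D}}_{\underline{c}}$ via Theorem~\ref{thm:54}.

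First I would note that, since $R_{X\to Y}$ is a triangulated functor, the hypothesis that $R_{X\to Y}(\mathcal{F})[d_{\underline{c}}]$ is perverse is the same as $R_{X\to Y}(\mathcal{F}[d_{\underline{c}}]) \in \mathrm{Perv}_{I_u}(Y)$. Applying Proposition~\ref{prop:thickinbimod} to the object $\mathcal{F}[d_{\underline{c}}] \in D_{I_u}^b(X)$ then shows $\mathcal{F}[d_{\underline{c}}] \in {}^{\mathrm{bimod}}D^{\heartsuit}$, i.e.\ ${}^{\mathrm{bimod}}H^j(\mathcal{F}) = 0$ for every $j \neq d_{\underline{c}}$, so that $\mathcal{F} \in {}^{\mathrm{bimod}}D^{\heartsuit}[-d_{\underline{c}}]$.

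Next I would pass to the subquotient. By Theorem~\ref{thm:54}, the equivalence (\ref{eqn:dbcoh}) restricts to an equivalence $\tilde{\mathcal{D}}_{\leq\underline{c}} \cong \mathcal{A}_{\leq O_{\underline{c}}}$, and since the support condition defining $\mathcal{A}_{\leq O_{\underline{c}}}$ and $\mathcal{A}_{<O_{\underline{c}}}$ is visibly stable under NCS-truncation (truncation only deletes or retains cohomology modules, hence cannot enlarge their support), the NCS $t$-structure restricts to $\mathcal{A}_{\leq O_{\underline{c}}}$ and descends to $\mathcal{A}_{O_{\underline{c}}}$ with the quotient functor being $t$-exact. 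Hence the image of $\mathcal{F}$ in $\tilde{\mathcal{D}}_{\underline{c}}$, transported along this equivalence, still has its NCS-cohomology concentrated in the single degree $d_{\underline{c}}$. Since Theorem~\ref{thm:54} identifies the perverse $t$-structure on $\tilde{\mathcal{D}}_{\underline{c}}$ with precisely this induced NCS $t$-structure shifted by $d_{\underline{c}}$, the image of $\mathcal{F}$ in $\tilde{\mathcal{D}}_{\underline{c}}$ lies in the perverse heart, which is what was claimed.

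The one point requiring care is the bookkeeping of the shift: one must verify that the normalization of ``shifted by $d_{\underline{c}}$'' in Theorem~\ref{thm:54} is such that $\mathcal{F}\in{}^{\mathrm{bimod}}D^{\heartsuit}[-d_{\underline{c}}]$ is exactly the condition matching perversity of the image in $\tilde{\mathcal{D}}_{\underline{c}}$. A convenient consistency check is the minimal cell, where $O_{\underline{c}} = \{0\}$, $\tilde{\mathcal{D}}_{<\underline{c}} = 0$, $d_{\underline{c}} = \tfrac{\dim\mathcal{N}}{2}$, and the assertion reduces to the equivalence of conditions (1) and (2) in Lemma~\ref{lem:zerosupp} applied to $\mathcal{F}[d_{\underline{c}}]$. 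Apart from this sign-tracking, the argument uses no input beyond Proposition~\ref{prop:thickinbimod} and Theorem~\ref{thm:54}, so I do not expect any substantive obstacle.
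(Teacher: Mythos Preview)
Your proposal is correct and matches the paper's approach: the paper simply states that the corollary is a direct consequence of Proposition~\ref{prop:thickinbimod} (combined with Theorem~\ref{thm:54}), and you have spelled out exactly that deduction, including the passage to the subquotient and the shift bookkeeping. Your sanity check against Lemma~\ref{lem:zerosupp} for the minimal cell is a nice way to pin down the sign convention that the paper leaves implicit.
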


Because we know that ${}^{\mathrm{th}}D^{\heartsuit} \neq {}^{\mathrm{bimod}}D^{\heartsuit}$, we do not see a way to make a stronger statement of this theorem in full generality to complete the analogy with Theorem \ref{thm:bfo}. However, in the Iwahori-Whittaker case, one can do so using our Theorem \ref{thm:tstructure}.

Indeed, let $\tilde{\mathcal{D}}_{\leq \underline{c}}^{\mathrm{IW}}(X)$ and $\tilde{\mathcal{D}}_{\underline{c}}^{\mathrm{IW}}(X)$ be the subcategory and subquotient category analogous to $\tilde{\mathcal{D}}_{\leq \underline{c}}$ and $\tilde{\mathcal{D}}_{\underline{c}}$, but in the Iwahori-Whittaker setting. We note that our results show that $R_{X \to Y}^{\mathrm{IW}} : D_{\mathrm{IW}}^b(X) \to D_{\mathrm{IW}}^b(Y)$ is an equivalence of bounded derived categories, so for any $\underline{c}$, we can define the triangulated category $\tilde{\mathcal{D}}_{\leq \underline{c}}^{\mathrm{IW}}(Y)$ as the image of $\tilde{\mathcal{D}}_{\leq \underline{c}}^{\mathrm{IW}}(X)$ under the equivalence $R_{X \to Y}^{\mathrm{IW}}$. We then get a functor $R_{X \to Y}^{\mathrm{IW}} : \tilde{\mathcal{D}}_{\underline{c}}^{\mathrm{IW}}(X) \to \tilde{\mathcal{D}}_{\underline{c}}^{\mathrm{IW}}(Y)$ for any $\underline{c} \in \tilde{C}$. In this affine Iwahori-Whittaker case, we can now state a full analogy to Theorem \ref{thm:bfo}.

\begin{theorem}
    For any $\underline{c} \in \tilde{C}$, the functor
    \begin{align*}
        R_{X \to Y}^{\mathrm{IW}}[d_{\underline{c}}] : \tilde{\mathcal{D}}_{\underline{c}}^{\mathrm{IW}}(X) \to \tilde{\mathcal{D}}_{\underline{c}}^{\mathrm{IW}}(Y)
    \end{align*}
    is $t$-exact for the perverse $t$-structure on $X$ and $Y$ respectively.
\end{theorem}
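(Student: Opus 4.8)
The plan is to factor the functor in question as a composite of two $t$-exact functors,
\[R_{X\to Y}^{\mathrm{IW}}[d_{\underline{c}}] \;=\; R_{X\to Y}^{\mathrm{IW}}\circ[d_{\underline{c}}],\]
where $[d_{\underline{c}}]$ is $t$-exact as a functor from $\tilde{\mathcal{D}}_{\underline{c}}^{\mathrm{IW}}(X)$ equipped with the (descended) perverse $t$-structure to $\tilde{\mathcal{D}}_{\underline{c}}^{\mathrm{IW}}(X)$ equipped with the (descended) noncommutative Springer $t$-structure --- this is the cellwise comparison of the two $t$-structures --- and $R_{X\to Y}^{\mathrm{IW}}$ is $t$-exact from the NCS $t$-structure on the source subquotient to the perverse $t$-structure on the target subquotient, which will follow from Theorem \ref{thm:tstructure}. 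A composite of $t$-exact functors being $t$-exact, this proves the theorem.

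To make the second factor precise, I would first note that both $t$-structures descend to the subquotients. Under the equivalence (\ref{eqn:iwcoh}) the cell subcategory $\tilde{\mathcal{D}}_{\leq\underline{c}}^{\mathrm{IW}}(X)$ corresponds to the part of $D^b(A^0-\mathrm{mod}^{G^\vee}_{\mathrm{fg}})$ whose cohomology modules are set-theoretically supported on $\overline{O_{\underline{c}}}$ (the Iwahori--Whittaker counterpart of \cite[Theorem 54(a)]{B}, extractable from loc.\ cit.\ or transportable from the bimodule statement Theorem \ref{thm:54} along $\mathrm{Av}_\psi$ via Proposition \ref{prop:commutes}); since this support condition is preserved by the tautological truncations, and since the same filtration is compatible with the perverse $t$-structure as recalled around Lemma \ref{lem:zerosupp}, the subcategories $\tilde{\mathcal{D}}_{\leq\underline{c}}^{\mathrm{IW}}(X)$ and $\tilde{\mathcal{D}}_{<\underline{c}}^{\mathrm{IW}}(X)$ are stable under both the NCS and the perverse truncation functors, so both $t$-structures descend to the Verdier quotient $\tilde{\mathcal{D}}_{\underline{c}}^{\mathrm{IW}}(X)$. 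Since $\tilde{\mathcal{D}}_{\leq\underline{c}}^{\mathrm{IW}}(Y)$ and $\tilde{\mathcal{D}}_{<\underline{c}}^{\mathrm{IW}}(Y)$ are by definition the images of the corresponding $X$-subcategories under the equivalence $R_{X\to Y}^{\mathrm{IW}}$, which by Theorem \ref{thm:tstructure} carries the NCS $t$-structure to the perverse one, these images are stable under perverse truncation and the perverse $t$-structure descends to $\tilde{\mathcal{D}}_{\underline{c}}^{\mathrm{IW}}(Y)$; moreover the induced functor $\tilde{\mathcal{D}}_{\underline{c}}^{\mathrm{IW}}(X)\to\tilde{\mathcal{D}}_{\underline{c}}^{\mathrm{IW}}(Y)$ is $t$-exact from the descended NCS to the descended perverse $t$-structure. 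For the first factor, I would invoke the Iwahori--Whittaker analogue of \cite[Theorem 55]{B}, of which Lemma \ref{lem:zerosupp} is the minimal-orbit case, asserting that on $\tilde{\mathcal{D}}_{\underline{c}}^{\mathrm{IW}}(X)$ the perverse $t$-structure equals the NCS $t$-structure shifted by $d_{\underline{c}}$; combining the two factors gives the claim.

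The step requiring genuine work is this last input: the Iwahori--Whittaker version of \cite[Theorem 55]{B} for an arbitrary cell $\underline{c}$, rather than just the minimal orbit covered by Lemma \ref{lem:zerosupp} or the bimodule case of Theorem \ref{thm:54}. To obtain it one would check that $\mathrm{Av}_\psi$ --- equivalently the pushforward $\mathrm{pr}_{2*}$ of Proposition \ref{prop:commutes} --- intertwines the support filtrations on $D^b(A\otimes_{\mathcal{O}(\mathfrak{g}^\vee)}(A^0)^{\mathrm{op}}-\mathrm{mod}^{G^\vee}_{\mathrm{fg}})$ and $D^b(A^0-\mathrm{mod}^{G^\vee}_{\mathrm{fg}})$, that it is $t$-exact for both the perverse and the tautological $t$-structures, and that it induces equivalences on the associated graded pieces, so that the comparison recorded in Theorem \ref{thm:54} descends through $\mathrm{Av}_\psi$ to the Iwahori--Whittaker subquotients. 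The remaining verifications --- stability of the two filtrations under the relevant truncations, hence existence of the descended $t$-structures --- are routine once these compatibilities are established.
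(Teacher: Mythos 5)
Your proposal is correct and follows essentially the same two-step reduction as the paper: use the cell-wise comparison of the perverse and NCS $t$-structures (Theorem \ref{thm:54}) to absorb the shift $[d_{\underline{c}}]$, then invoke Theorem \ref{thm:tstructure} to pass from the NCS $t$-structure on $X$ to the perverse $t$-structure on $Y$. You are also right to flag that an Iwahori--Whittaker form of Theorem \ref{thm:54} is what is actually needed --- the paper cites that theorem here without noting it is stated for the monodromic/bimodule setting --- and your suggested transport along $\mathrm{Av}_\psi$ via Proposition \ref{prop:commutes} is the natural way to supply it.
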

\begin{proof}
    Suppose $\mathcal{F} \in \tilde{\mathcal{D}}_{\underline{c}}^{\mathrm{IW}}(X)$ is perverse. By Theorem \ref{thm:54}, $\mathcal{F}[d_{\underline{c}}] \in \tilde{\mathcal{D}}_{\underline{c}}^{\mathrm{IW}}(X)$ lies in the heart of the $t$-structure on $\tilde{\mathcal{D}}_{\underline{c}}^{\mathrm{IW}}(X)$ induced from the NCS $t$-structure on $D_{\mathrm{IW}}^b(X)$. Since Theorem \ref{thm:tstructure} implies that $R_{X \to Y}^{\mathrm{IW}}$ sends the heart of the NCS $t$-structure into the heart of the perverse $t$-structure on $D_{\mathrm{IW}}^b(Y)$, we must have that $R_{X \to Y}^{\mathrm{IW}}(\mathcal{F}[d_{\underline{c}}])$ is in the perverse $t$-structure on $\tilde{\mathcal{D}}_{\underline{c}}^{\mathrm{IW}}(Y)$. Since $R_{X \to Y}^{\mathrm{IW}}$ is an equivalence of categories, the converse must also hold, implying that $ R_{X \to Y}^{\mathrm{IW}}[d_{\underline{c}}] : \tilde{\mathcal{D}}_{\underline{c}}^{\mathrm{IW}}(X) \to \tilde{\mathcal{D}}_{\underline{c}}^{\mathrm{IW}}(Y)$ is $t$-exact for the perverse $t$-structures on both sides.
\end{proof}

\clearpage

\bibliographystyle{alpha}
\bibliography{bibl}

\end{document}